\newtheorem{lemma}{Lemma}
\newtheorem{theorem}{Theorem}
\newtheorem{proposition}{Proposition}
\theoremstyle{definition}
\newtheorem{definition}{Definition}
\newcommand{\proj}[1]{\bm{\Pi}^{\varepsilon}_{1,E} #1}
\newcommand{\projLtwo}[2][\ell]{\bm{\Pi}^{0}_{#1,E} \bm{\varepsilon} (#2)}
\newcommand{\grad}[1]{\nabla #1}
\newcommand{\diverge}[1]{\nabla \cdot #1}
\newcommand{\polysym}{{\mathbb{P}}_{\ell}(E)^{2\times 2}_{\text{sym}}}
\newcommand{\VEM}{\bm{V}_{1,\ell}^E}
\newcommand{\symgrad}[1]{\nabla_s #1}
\newcommand{\pder}[1]{\frac{\partial }{\partial #1}}
\newcommand{\largeset}{\mathcal{EN}_{1,\ell}^E}
\newcommand{\doubleline}[1]{\overline{\overline{#1}}}
\newcommand{\Nmatrix}{\bm{N}^{\partial E}}
\newcommand{\Hone}{\bm{H}^1_0}
\newcommand{\Pker}{{\mathbb{P}_{\ell}^{\text{ker}}(E)}}
\newcommand{\acmajor}[1]{#1}
\newcommand{\acrev}[1]{#1}
\journal{Elsevier}
\begin{document}

\begin{frontmatter}

\title{Stabilization-free virtual element method for plane elasticity}

\cortext[cor1]{Corresponding authors}

\author[inst1]{Alvin Chen\corref{cor1}}
\ead{avnchen@ucdavis.edu}

\affiliation[inst1]{organization={Department of Mathematics},
            addressline={University of California}, 
            city={Davis},
            postcode={95616}, 
            state={CA},
            country={USA}}

\author[inst2]{N. Sukumar\corref{cor1}}
\ead{nsukumar@ucdavis.edu}
\affiliation[inst2]{organization={Department of Civil and Environmental Engineering},
            addressline={University of California}, 
            city={Davis},
            postcode={95616}, 
            state={CA},
            country={USA}}

\begin{abstract}
We present the construction and application of a first order stabilization-free virtual element method to problems in plane elasticity. Well-posedness and error estimates of the discrete problem are established. The method is assessed on a series of well-known benchmark problems from linear elasticity and numerical results are presented that affirm the optimal convergence rate of the virtual element method in the $L^2$ norm and the energy seminorm.
\end{abstract}

\begin{keyword}
\acmajor{virtual element method} \sep polygonal meshes \sep stabilization-free hourglass control
\sep strain projection \sep spurious modes \sep elastic continua
\end{keyword}

\end{frontmatter}

\section{Introduction}\label{sec:intro}
In the past few years there has been considerable interest in the study of extensions of finite element methods to arbitrary polygonal meshes. The Virtual Element Method (VEM) is one such method 
introduced 
in~\cite{AHMAD2013376,basicprinciple,Veiga2014TheHG,daveiga2014virtual} 
for Poisson and other scalar elliptic boundary-valued problems. In~\cite{elastic,Artioli:2020:CMAME,elasticdaveiga} and~\cite{Gain:2014:VEM}, the approach has been extended to problems in two- and three-dimensional elasticity, respectively. \acmajor{The original VEM relies on a choice of a suitable stabilization operator to preserve the coerciveness of the problem. This has led to many studies on the choice of the stability term for 
elliptic problems in two dimensions~\cite{Veiga:2017:m3as,Mascotto:2018:nmpde}
and three dimensions~\cite{DASSI:2018:camwa}, as well as nonlinear elasticity~\cite{debellis:2019:cas, Reddy:2022:springer, Wriggers:2017:cmech}.} \acmajor{In} Berrone et al.~\cite{berrone2021lowest}, a stabilization-free VEM was introduced for the two-dimensional Poisson equation, which retains optimal order error estimates without a stability term. The main idea in this approach 
is to modify the standard first order virtual element space to allow for 
the computation of a higher order polynomial $L^2$ projection of the gradient. The degree of the polynomial on each element is chosen so that the discrete problem remains bounded and coercive.   
A related method for plane elasticity is proposed by D'Altri et al.~\cite{enhanced:VEM}, in which a $k$-th order polynomial space is enhanced with higher order
polynomials, and static condensation is then applied. In certain cases,
this approach leads to a stabilization-free VEM. \acmajor{The construction of the stabilization-free space can be seen as an extension of the space first defined in~\cite{AHMAD2013376}, while the enhancement of the strain projection resembles
assumed strain approaches and the method
of incompatible modes that are used in
the finite element method~\cite{Simo:1990:ijnme,Wriggers:1996:cmech}}.
The form of the stabilization term in the virtual element method is similar to that
in the hourglass stabilized finite \acmajor{element method}~\cite{hourglass,Flanagan:1981:AUS}.  The need 
for the
stabilization term is an undesirable attribute of these methods, 
\acmajor{since it can be problem dependent, is not uniquely defined and is
more involved to construct for problems with anisotropic material behavior and geometric or material nonlinearities.
The stabilization-free method retains both the flexibility with respect to
meshing and the optimal convergence rates of standard VEM, while only using information from the mesh to ensure coercivity. It is therefore of interest to develop stabilization-free virtual element methods~\cite{berrone2021lowest,enhanced:VEM}.}

In this paper, we extend the approach proposed in~\cite{berrone2021lowest} to problems in plane elasticity. In 
Section~\ref{sec:elastostatics}\acmajor{,} we set up the model problem of plane elasticity, and in
Sections~\ref{sec:mathprelim} and~\ref{sec:projections}\acmajor{,} 
we introduce the polynomial approximations and projections used in our constructions. 
The extension of the work from~\cite{berrone2021lowest} to
the vector-valued case is described in Section~\ref{sec:EEVEM}. In Section~\ref{sec:implementation}, we
present the construction and implementation of the 
projection matrices and the element stiffness matrix. Section~\ref{sec:theory} contains the theoretical results of well-posedness and error estimates using approximation techniques detailed in~\cite{berrone2021lowest,brenner,estimatesVEM,mixed:fem,ciarlet}. In Section~\ref{sec:numerical}, we solve several benchmark elasticity problems: 
patch test, bending of a cantilever beam, plate with a circular hole under uniaxial tension, and a hollow cylinder under internal pressure. The convex polygonal meshes that are used in the numerical study are generated using PolyMesher~\cite{talischi2012polymesher}. The rates of convergence in
the numerical simulations are found to be 
in agreement with the theoretical a priori error estimates. 
\acmajor{We close with a summary of our main findings 
in Section~\ref{sec:conclusions}.}

\section{Elastostatic Model Problem and Weak Form}\label{sec:elastostatics}
We consider an elastic body that occupies the region $\Omega \subset \mathbb{R}^2$ with
boundary $\partial \Omega$. Assume that the boundary $\partial \Omega$ can be written as the disjoint union of two parts $\Gamma_D$ and $\Gamma_N$ with prescribed Dirichlet and Neumann conditions on $\Gamma_D$ and $\Gamma_N$, respectively. 
The strong form for the elastostatic problem is:
\begin{subequations}\label{strongproblem}
\begin{align}
&\diverge{\bm{\sigma}} + \bm{f} = \bm{0}  \ \ \textrm{in } \Omega, 
\quad \bm{\sigma} = \bm{\sigma}^T \ \ \textrm{in } \Omega, \\
&\bm{\varepsilon}(\bm{u}) =\symgrad{\bm{u}} = \frac{1}{2}(\grad{\bm{u}} + \grad{\bm{u}}^T),\\
&\bm{\sigma}(\bm{u}) = \mathbb{C} : \bm{\varepsilon}(\bm{u}), \\ 
&\bm{u} = \bar{\bm{u}} \quad \text{on } \Gamma_D ,\\
&\bm{\sigma} \cdot \mathbf{n} = \bar{\bm{t}} \quad \text{on } \Gamma_N,
\end{align}
\end{subequations}
where $\bm{f} \in [L^2(\Omega)]^2$  is the body force per unit volume, $\bm{\sigma}$ is the Cauchy stress tensor,
$\bm{\varepsilon}$ is the small-strain tensor with $\symgrad{(\cdot)}$ being the symmetric gradient operator, 
\acmajor{$\bm{u}$} is the displacement field, 
$\bar{\bm{u}}$ and $\bar{\bm{t}}$ are the imposed essential boundary and traction boundary data, and 
$\bm{n}$ is the unit outward normal on the boundary. Linear elastic constitutive material relation ($\mathbb{C}$ is the material moduli tensor) and small-strain kinematics are assumed. 

The associated weak form of the boundary-value problem posed in~\eqref{strongproblem} is to find the displacement field $\bm{u} \in \bm{U}$, where $\bm{U} := \{\bm{u} : \bm{u} \in [H^{1}(\Omega)]^2, \ 
\bm{u} = \bar{\bm{u}} \ \textrm{on } \Gamma_D \}$, such that
\begin{subequations}\label{weakbilinearform}
\begin{align}
    a(\bm{u},\bm{v}) &= b(\bm{v}) \quad \forall \bm{v} \in \bm{U}_0,
\intertext{where $\bm{U}_0 = [H^1_0(\Omega)]^2$ and}
    a(\bm{u},\bm{v}) &= \int_{\Omega} \bm{\sigma}(\bm{u}): \bm{\varepsilon}(\bm{v}) \, d \bm{x}, \\ 
    b(\bm{v}) &= \int_{\Omega} \bm{f} \cdot \bm{v} \, d \bm{x} + \int_{\Gamma_N} \bar{\bm{t}}\cdot \bm{v} \, ds .
\end{align}
\end{subequations}
In~\eqref{weakbilinearform}, $H^1(\Omega)$ is the Hilbert space that consists of square-integrable functions up to order $1$ and $H_0^1(\Omega)$ is the subspace of
$H^1(\Omega)$ that contains functions that vanish on $\acmajor{\Gamma_D}$.

\section{Mathematical Preliminaries}\label{sec:mathprelim}
Let $\mathcal{T}^h$ be the decomposition of the region $\Omega$ into nonoverlapping polygons. For each polygon $E \in \mathcal{T}^h$, we denote its diameter by $h_E$ and its centroid by $\bm{x}_E$. Each
polygon $E$ consists of $N_E$ vertices (nodes) with $N_E$ edges. Let $\acmajor{\mathcal{E}_E}$ be the set of all edges of $E$. We denote the coordinate of each vertex by $\bm{x}_i := (x_i,y_i)$. In the VEM, standard mesh assumptions are placed on $\mathcal{T}^h$ (e.g., star-convexity of $E$)~\cite{basicprinciple}.

\subsection{Polynomial basis}
Over each element $E$, we define $[{\mathbb{P}_1}(E)]^2$ 
as the space of of two-dimensional vector-valued polynomials of degree less than or equal to $1$. On each $E$, we will also need to choose a basis. In particular, we choose the basis 
as: 
\begin{subequations}
\begin{align}
    \bm{\widehat{M}}(E) = \begin{bmatrix}  
    \begin{Bmatrix}
    1 \\ 0 
    \end{Bmatrix}, 
    \begin{Bmatrix}
    0 \\ 1
    \end{Bmatrix}, 
    \begin{Bmatrix}
    -\eta \\ \xi
    \end{Bmatrix}, 
    \begin{Bmatrix}
    \eta \\ \xi
    \end{Bmatrix}, 
    \begin{Bmatrix}
    \xi \\ 0
    \end{Bmatrix}, 
    \begin{Bmatrix}
    0 \\ \eta
    \end{Bmatrix}
    \end{bmatrix},\label{poly_2_basis}
\end{align}
where 
\begin{align}
    \xi = \frac{x-x_E}{h_E} , \quad \eta = \frac{y-y_E}{h_E} .
\end{align}
\end{subequations}
The $\alpha$-th element of the set $\bm{\widehat{M}}(E)$ is denoted by $\bm{m}_\alpha$.

\smallskip
We also define the space 
$\polysym$ that represents $2\times 2$ symmetric matrix polynomials of degree less than or equal to $\ell$. Since the matrices are symmetric we can represent them in terms of $3 \times 1$ vectors. On each element $E$, we choose the basis
\begin{subequations}
\begin{align}
    \widehat{\bm{M}}^{2\times 2}(E)=
    \begin{bmatrix}
    \begin{Bmatrix}
        1 \\ 0 \\ 0 
    \end{Bmatrix},
    \begin{Bmatrix}
        0 \\ 1 \\ 0
    \end{Bmatrix},
    \begin{Bmatrix}
        0 \\ 0 \\ 1
    \end{Bmatrix},
    \begin{Bmatrix}
        \xi \\ 0 \\ 0
    \end{Bmatrix},
    \begin{Bmatrix}
        0 \\ \xi \\ 0
    \end{Bmatrix},
    \begin{Bmatrix}
        0 \\ 0 \\ \xi
    \end{Bmatrix},\dots
    \begin{Bmatrix}
        \eta^\ell \\ 0 \\ 0
    \end{Bmatrix},
    \begin{Bmatrix}
        0 \\ \eta^\ell \\0 
    \end{Bmatrix},
    \begin{Bmatrix}
        0 \\ 0 \\ \eta^\ell
    \end{Bmatrix}
    \end{bmatrix}.
\end{align}
We denote the $\alpha$-th vector in this set as $\widehat{\bm{m}}_\alpha$ and define the matrix $\bm{N}^p$ that
contains these basis elements as 
\begin{align}
    \bm{N}^p := \begin{bmatrix}
        1 & 0 & 0 & \xi & 0 & 0 &\dots & \eta^\ell & 0 & 0 \\
        0 & 1 & 0 & 0 &\xi & 0 &\dots & 0 & \eta^\ell & 0 \\
        0 & 0 & 1 & 0 & 0 & \xi & \dots& 0 & 0 & \eta^\ell
    \end{bmatrix}.\label{matrix_N_p}
\end{align}
\end{subequations}

\subsection{Matrix-vector representation}
For later computations, it is more convenient to reduce the tensor expressions into equivalent matrix and vector representations. We first note that for plane elasticity we can express the components of the stress and strain tensors as symmetric $2\times 2$ matrices. \acrev{However, instead of using symmetric matrices, we adopt Voigt notation to represent the matrices as $3\times 1$ arrays. In particular, for any symmetric $2\times 2$ matrix $\bm{A}$, denote its Voigt representation $\overline{\bm{A}}$ by:}
\acrev{\begin{align*}
    \bm{A} = \begin{bmatrix}
        a_{11} & a_{12} \\ a_{12} & a_{22}
    \end{bmatrix}, \quad \overline{\bm{A}}= 
    \begin{Bmatrix}
        a_{11} \\ a_{22} \\ a_{12}
    \end{Bmatrix}.
\end{align*}}
On using Voigt (engineering) notation, we can write \acrev{the stress and strain} in terms of $3\times 1$ \acrev{arrays}:
\begin{align}\label{eq:matrixvector}
    \overline{\bm{\sigma}} = \begin{Bmatrix}
        \sigma_{11} \\ \sigma_{22} \\ \sigma_{12} 
    \end{Bmatrix}, \quad
    \overline{\bm{\varepsilon}} = \begin{Bmatrix}
       \varepsilon_{11} \\ \varepsilon_{22} \\ 2\varepsilon_{12}
  \end{Bmatrix}.
\end{align}
Furthermore, on using these conventions we can also express the strain-displacement relation and the constitutive law in matrix form as:
\begin{subequations}
\begin{align}
    \overline{\bm{\sigma}} &= \bm{C}\overline{\bm{\varepsilon}}, \quad
    \overline{\bm{\varepsilon}} = \bm{S}\bm{u}, \\
\intertext{where $\bm{S}$ is a matrix differential operator that is given by}
    \bm{S} &= \begin{bmatrix}
        \pder{x} & 0 \\ 0 & \pder{y} \\ \pder{y} & \pder{x}
    \end{bmatrix},
\end{align}
\end{subequations}
and $\bm{C}$ is the associated matrix representation of the material tensor \acmajor{that is given by
\begin{align*}
    \bm{C} &= \frac{E_Y}{(1-\nu^2)} \begin{bmatrix}
    1 & \nu & 0 \\ \nu & 1 & 0 \\ 0 & 0 & \frac{1-\nu}{2} 
    \end{bmatrix} \qquad \qquad \qquad \qquad  (\textrm{plane stress}),\\
    \bm{C} &= \frac{E_Y}{(1+\nu)(1-2\nu)} \begin{bmatrix}
    1-\nu & \nu & 0 \\ \nu & 1-\nu & 0 \\ 0 & 0 & \frac{1-2\nu}{2}
    \end{bmatrix} \quad (\textrm{plane strain}),
\end{align*}}
\acmajor{where $E_Y$ is the 
Young's modulus and $\nu$ is the Poisson's ratio of the material.  }
\section{Projection operators}\label{sec:projections}
We present the derivation of the two projections that are used in the stabilization-free VEM: energy projection of the displacement field and $L^2$ projection of the strain field.
\subsection{Energy projection of the displacement field}
Let $E$ be any generic element with
$\bm{H}^1(E) := [H^1(E)]^2$. 
We now define the energy projection operator $\proj :\bm{H}^1(E) \to [\bm{\mathbb{P}}_{1}(E)]^2 $ by the unique function that satisfies the orthogonality relation:
\begin{align}
    a^E(\bm{m}_\alpha,\bm{v}-\proj{\bm{v}}) = 0 \quad \forall \bm{m}_{\alpha} \in \bm{\widehat{M}}(E).
\end{align}
Note that for $\alpha =1,2,3$, which corresponds to the rigid-body modes, we obtain $\bm{\sigma}(\bm{m}_\alpha)=\bm{0}$. So we obtain three trivial equations, $0=0$. To fully define the projection,
we need to choose a suitable projection operator $P_0: \bm{H}^1(E) \times \bm{H}^1(E) \to \mathbb{R}$. In particular, we 
select it as a  discrete $L^2$ inner product on $E$: 
\begin{align}
    P_0(\bm{u},\bm{v}) := \frac{1}{N_E}\sum_{j=1}^{N_E}{\bm{u}(\bm{x}_j)\cdot \bm{v}(\bm{x}_j)},
\end{align}
and require the condition 
\begin{align}
    P_0(\bm{m}_\alpha,\bm{v}-\proj{\bm{v}}) =\frac{1}{N_E}\sum_{j=1}^{N_E} {(\bm{v}-\proj{\bm{v}})(\bm{x}_j) \cdot \bm{m}_\alpha(\bm{x}_j)} = 0 \quad (\alpha =1,2,3).\label{P0_condition}
\end{align}
On writing out the expressions, we have the equivalent system 
\begin{subequations}
\begin{align}
    \int_{E}{\bm{\sigma}(\bm{m}_\alpha):\bm{\varepsilon}(\proj{\bm{v}})\,d\bm{x}} &= \int_{E}{\bm{\sigma}(\bm{m}_\alpha):\bm{\varepsilon}(\bm{v})\,d\bm{x}} \quad (\alpha=4,5,6),\label{energy_eq_1} \\ 
    \frac{1}{N_E}\sum_{j=1}^{N_E}{\proj{\bm{v}}(\bm{x}_j)\cdot \bm{m}_\alpha(\bm{x}_j)} &= \frac{1}{N_E}\sum_{j=1}^{N_E}{\bm{v}(\bm{x}_j)\cdot \bm{m}_\alpha(\bm{x}_j)} \quad (\alpha =1,2,3)\label{energy_eq_2}.
\end{align}
\end{subequations}
We can also rewrite this using the matrix-vector representation. For the right-hand side of~\eqref{energy_eq_1}, we \acrev{use~\eqref{eq:matrixvector}} to write 
\begin{align*}
    \bm{\sigma}(\bm{m}_\alpha):\bm{\varepsilon}(\bm{v}) = \overline{\bm{\varepsilon}(\bm{v})}\cdot \overline{\bm{\sigma}(\bm{m}_\alpha)} &= \left(\overline{\bm{\varepsilon}(\bm{v})}\right)^T \overline{\bm{\sigma}(\bm{m}_\alpha)} \nonumber \\
    &=\left(\bm{S}\bm{v}\right)^T\left(\bm{CS}\bm{m}_{\alpha}\right).
\end{align*}
Similarly, the left-hand side can be written as
\begin{align*}
    \bm{\sigma}(\bm{m}_\alpha):\bm{\varepsilon}(\proj{\bm{v}}) &:= \left(\overline{\bm{\varepsilon}(\proj{\bm{v}})}\right)^T\left(\bm{CS}\bm{m}_{\alpha}\right) \nonumber 
    =\left(\bm{S}\proj{\bm{v}}\right)^T\left(\bm{CS}\bm{m_
    \alpha}\right).
\end{align*}
Therefore, we can express~\eqref{energy_eq_1} in matrix-vector form as:
\begin{align}
    \int_{E}{\left(\bm{S}\proj{\bm{v}}\right)^T\left(\bm{CS}\bm{m}_{\alpha}\right)\,d\bm{x}} &= \int_{E}{\left(\bm{S}\bm{v}\right)^T\left(\bm{CS}\bm{m}_{\alpha}\right)\,d\bm{x}}.\label{energy_matrix_eq_1}
\end{align}

\subsection{\texorpdfstring{L\textsuperscript{2}}{L2} projection of the strain field}
We define the associated $L^2$ projection operator $\projLtwo{.} :\bm{H}^1(E) \to \polysym$ of the strain tensor by the unique operator that satisfies 
\begin{subequations}\label{defL2proj}
\begin{align}
    \label{defL2proj-a}
    (\bm{\varepsilon}^p,\bm{\varepsilon}(\bm{v})- \projLtwo{\bm{v}})_E &= 0 \quad \forall \bm{\varepsilon}^p \in \polysym , \\
    \intertext{where we use the standard $L^2$ inner product:}
    \label{defL2proj-b}
    (\bm{\varepsilon}^p,\bm{\varepsilon})_E &= \int_{E}{\bm{\varepsilon}^p:\bm{\varepsilon} \,d\bm{x}}.
\end{align}
\end{subequations}
Writing out the expression in~\eqref{defL2proj-a}, we have
\begin{align}
    \int_{E}{\bm{\varepsilon}^p}:\projLtwo{\bm{v}} \,d\bm{x} = \int_{E}{\bm{\varepsilon}^p:\bm{\varepsilon}(\bm{v})\,d\bm{x}}.\label{L2_tensor_eq}
\end{align}
On expanding the right-hand side of~\eqref{L2_tensor_eq}, and on
applying integration by parts and the divergence theorem, we obtain
\begin{align*}
    \int_{E}{\bm{\varepsilon}^p:\bm{\varepsilon}(\bm{v})}d\bm{x} &= \int_{E}{\grad  \cdot (\bm{v}\cdot \bm{\varepsilon}^p)\,d\bm{x} }- \int_{E}{\bm{v}\cdot (\diverge{\bm{\varepsilon}^p})\,d\bm{x}} \\ &= \int_{\partial E}{\bm{n}\cdot (\bm{v} \cdot \bm{\varepsilon}^p)\,ds} -\int_{E}{\bm{v}\cdot (\diverge{\bm{\varepsilon}^p})\,d\bm{x}}. 
\end{align*}
Then,~\eqref{L2_tensor_eq} becomes
\begin{equation}\label{L2_eq}
    \int_{E}{\bm{\varepsilon}^p:\projLtwo{\bm{v}}}\,d\bm{x} =  \int_{\partial E}{\bm{v}\cdot (\bm{\varepsilon}^p\cdot \bm{n})\,ds} -\int_{E}{\bm{v}\cdot (\diverge{\bm{\varepsilon}^p})\,d\bm{x}}.
\end{equation}
On using the matrix-vector representation in~\eqref{eq:matrixvector},
the first term on the right-hand side of~\eqref{L2_eq} becomes
\begin{subequations}
\begin{align}
    \bm{v} \cdot (\bm{\varepsilon}^p\cdot \bm{n} ) =  \bm{v}^T\begin{bmatrix}
        \varepsilon^p_{11} & \varepsilon^p_{12} \\ \varepsilon^p_{12} & \varepsilon^p_{22}
    \end{bmatrix}\begin{Bmatrix}
        n_1 \\ n_2
    \end{Bmatrix} 
    &= \bm{v}^T\begin{bmatrix}
        n_1 & 0 & n_2 \\
        0 & n_2 & n_1
    \end{bmatrix}
    \begin{Bmatrix}
        \varepsilon^p_{11} \\ \varepsilon^p_{22} \\ \varepsilon^p_{12}
    \end{Bmatrix} 
    :=\bm{v}^T \acmajor{\Nmatrix}\overline{\bm{\varepsilon}^p},
\intertext{where $\acmajor{\Nmatrix}$ is the matrix of element normal components, which is defined as}
    \acmajor{\Nmatrix} &:= \begin{bmatrix}
        n_1 & 0 & n_2 \\
        0 & n_2 & n_1 
    \end{bmatrix}.\label{normal_matrix}
\end{align}
\end{subequations}
For the second term on the right-hand side of~\eqref{L2_eq},  we have
\begin{subequations}
\begin{align}
    \bm{v}\cdot (\diverge{\bm{\varepsilon}^p}) = \bm{v}^T\begin{Bmatrix}
        \frac{\partial \varepsilon^p_{11}}{\partial x} + \frac{\partial \varepsilon^p_{12}}{\partial y} \\ 
        \frac{\partial \varepsilon^p_{12}}{\partial x} + \frac{\partial \varepsilon^p_{22}}{\partial y}
    \end{Bmatrix} 
    &= \bm{v}^T\begin{bmatrix}
    \pder{x} & 0 &\pder{y} \\
    0 & \pder{y} & \pder{x}
    \end{bmatrix}
    \begin{Bmatrix}
        \varepsilon^p_{11} \\ \varepsilon^p_{22} \\ \varepsilon^p_{12}
    \end{Bmatrix} 
    :=\bm{v}^T\bm{\partial} \overline{\bm{\varepsilon}^p}, \\
\intertext{where $\bm{\partial}$ is a matrix operator that is defined as}
\bm{\partial} &:=\begin{bmatrix}
    \pder{x} & 0 &\pder{y} \\
    0 & \pder{y} & \pder{x}
    \end{bmatrix}.
\end{align}
\end{subequations}
Now we can express~\eqref{L2_eq} as 
\begin{equation}\label{eq:L2proj_1}
    \int_{E}{\bm{\varepsilon}^p:\projLtwo{\bm{v}}}\,d\bm{x} = \int_{\partial E}{\bm{v}^T \acmajor{\Nmatrix}\overline{\bm{\varepsilon}^p} \,ds} + \int_{E}{\bm{v}^T\bm{\partial} \overline{\bm{\varepsilon}^p}\,d\bm{x}}.
\end{equation}

Since $\projLtwo{\bm{v}}$ is the projection of the strain tensor onto symmetric matrix polynomials, we 
use~\eqref{eq:matrixvector} 
to represent it in terms of a vector. In particular, we set 
\begin{equation*}
    \overline{\projLtwo{\bm{v}}} = 
    \begin{Bmatrix}
        (\projLtwo{\bm{v}})_{11} \\ (\projLtwo{\bm{v}})_{22} \\ 2(\projLtwo{\bm{v}})_{12}
    \end{Bmatrix}.
\end{equation*}
Now, we can also write 
\begin{equation*}
     \bm{\varepsilon}^p:\projLtwo{\bm{v}} = \overline{\projLtwo{\bm{v}}} \cdot \overline{\bm{\varepsilon}^p} = \left(\overline{\projLtwo{\bm{v}}}\right)^T \overline{\bm{\varepsilon}^p} .
\end{equation*}
On using the above relations in~\eqref{eq:L2proj_1}, we seek 
the $L^2$ projection that satisfies
\begin{equation}
    \int_{E}{\left(\overline{\projLtwo{\bm{v}}}\right)^T \overline{\bm{\varepsilon}^p}\,d\bm{x}} = \int_{\partial E}{\bm{v}^T \acmajor{\Nmatrix}\overline{\bm{\varepsilon}^p} \,ds} + \int_{E}{\bm{v}^T\bm{\partial} \overline{\bm{\varepsilon}^p}\,d\bm{x}} \quad \forall  {\bm{\varepsilon}^p} \in \polysym. \label{L2_matrix_eq}
\end{equation}

\section{Enlarged Enhanced Virtual Element Space}\label{sec:EEVEM}
With the preliminary results in place, we now construct the discrete space for the stabilization-free virtual element method. Let $E$ be any polygonal element from $\mathcal{T}^h$, then 
following~\cite{berrone2021lowest},
we select the smallest value $\ell=\ell(E)$
that satisfies\footnote{In~\cite{enhanced:VEM},
the following inequality for $\ell=\ell(E)$ is proposed: $\frac{3}{2}(\ell+1)(\ell+2)\geq m-3$, 
where $m$ is the total number of degrees of freedom, 
which includes an additional $\ell(\ell+1)$ degrees of freedom due to
extending the vector polynomial approximation space.
\acmajor{However, a counterexample on regular polygons (A. Russo, personal communication, April 2022) shows that this condition is not sufficient.}}
\begin{align}\label{l_condition} 
    \acmajor{\frac{3}{2}(\ell+1)(\ell+2) - \text{dim}(\Pker) \geq 2N_E-3,}
\end{align}
\acmajor{where $N_E$ is the number of vertices (nodes) of element $E$ and $\Pker$ is the space defined by }
\begin{align*}
    \acmajor{\Pker:=\left\{ \bm{\varepsilon}^p \in \polysym :\int_{\partial E}{\left(\bm{v}-P_{r}(\bm{v})\right)\rvert_{\partial E}\cdot \left(\bm{\varepsilon}^p \cdot \bm{n}\right)\,ds}=0 \ \ \forall \bm{v} \right\}, } 
\end{align*}
\acmajor{where $P_{r}(\bm{v})$ is a projection of $\bm{v}$ onto rigid-body modes with $\bm{\varepsilon}(P_r(\bm{v}))=\bm{0}$. 
It can be shown that the dimension of the space $\Pker$ is bounded from above, and we include this result as a lemma.}
\acmajor{\begin{lemma}
Let $E$ be any polygonal element and $\ell \in \mathbb{N}$. Then 
\begin{align}\label{eq:pker_bound}
    \text{dim}({\Pker}) \leq \frac{\ell}{2}(3\ell+1).
\end{align}
\end{lemma}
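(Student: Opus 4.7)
The plan is to construct a linear map from a suitable polynomial displacement space into $(\polysym)^*$ whose image has dimension at least $4\ell+3$ and whose members all annihilate $\Pker$; the bound~\eqref{eq:pker_bound} then follows because $\dim\polysym - (4\ell+3) = \frac{\ell(3\ell+1)}{2}$.

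First I would consider the space of polynomial ``Navier'' displacements,
$V_N := \{\bm{u} \in [\mathbb{P}_{\ell+1}(E)]^2 : \diverge{\bm{\varepsilon}(\bm{u})} = \bm{0}\}$.
Since $\diverge{\bm{\varepsilon}(\cdot)}$ maps $[\mathbb{P}_{\ell+1}(E)]^2$ into $[\mathbb{P}_{\ell-1}(E)]^2$, rank--nullity yields $\dim V_N \ge (\ell+2)(\ell+3) - \ell(\ell+1) = 4\ell+6$. Because rigid-body modes sit inside $V_N$, modding out the three-dimensional rigid-body space $R$ gives $\dim(V_N/R) \ge 4\ell+3$.

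Next, for each $\bm{u} \in V_N$ I would introduce the linear functional $L_{\bm{u}} : \polysym \to \R$ defined by $L_{\bm{u}}(\bm{\varepsilon}^p) := \int_{\partial E}(\bm{u}-P_r(\bm{u}))\rvert_{\partial E} \cdot (\bm{\varepsilon}^p\cdot\bm{n})\,ds$. Setting $\bm{v}=\bm{u}$ in the definition of $\Pker$ immediately shows $L_{\bm{u}}$ annihilates $\Pker$, so $\Pker \subseteq \bigcap_{\bm{u}\in V_N} \ker L_{\bm{u}}$.

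The decisive step will be to show that $\bm{u}\mapsto L_{\bm{u}}$, as a linear map $V_N \to (\polysym)^*$, has kernel exactly $R$. Given $L_{\bm{u}} \equiv 0$, I would plug in the admissible test tensor $\bm{\varepsilon}^p = \bm{\varepsilon}(\bm{u}) \in \polysym$ (allowed since $\bm{u}$ has degree $\leq \ell+1$) and apply integration by parts together with $\bm{\varepsilon}(P_r(\bm{u})) = \bm{0}$ to obtain
\begin{equation*}
L_{\bm{u}}(\bm{\varepsilon}(\bm{u})) = \int_E |\bm{\varepsilon}(\bm{u})|^2\,d\bm{x} + \int_E (\bm{u}-P_r(\bm{u}))\cdot \diverge{\bm{\varepsilon}(\bm{u})}\,d\bm{x}.
\end{equation*}
The Navier property $\diverge{\bm{\varepsilon}(\bm{u})} = \bm{0}$ kills the second term, so $\int_E |\bm{\varepsilon}(\bm{u})|^2\,d\bm{x} = 0$, forcing $\bm{\varepsilon}(\bm{u}) = \bm{0}$ and hence $\bm{u}\in R$.

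Consequently, the image of $\bm{u}\mapsto L_{\bm{u}}$ has dimension at least $4\ell+3$, and since $\Pker$ lies in its annihilator we obtain $\dim\Pker \le \dim\polysym - (4\ell+3) = \frac{\ell(3\ell+1)}{2}$. The main obstacle is the energy-style choice $\bm{\varepsilon}^p = \bm{\varepsilon}(\bm{u})$ combined with the Navier condition: together these two ingredients manufacture the $L^2$-norm of $\bm{\varepsilon}(\bm{u})$, which is what forces injectivity modulo rigid motions; everything else then reduces to routine polynomial counting.
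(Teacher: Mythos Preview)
Your proof is correct and shares the paper's high-level strategy: produce a $(4\ell+3)$-dimensional object tied to Navier-type polynomial displacements that is transversal to $\Pker$, then count dimensions. The execution, however, is genuinely different. The paper works primally: it takes $\tilde{\mathbb{H}}_{\ell+1}(E)=\{\bm{p}\in[\mathbb{P}_{\ell+1}(E)]^2:\nabla\cdot\bm{\sigma}(\bm{p})=\bm{0}\}$, cites an external reference for $\dim\tilde{\mathbb{H}}_{\ell+1}=4\ell+6$, and shows $\Pker\cap\bm{\sigma}(\tilde{\mathbb{H}}_{\ell+1})=\{\bm{0}\}$ by testing against arbitrary (non-polynomial) $\bm{v}$, which forces a bump-function construction on an open subset to reach a contradiction. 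You work dually with the functionals $L_{\bm{u}}$, obtain $\dim V_N\ge 4\ell+6$ directly from rank--nullity (no citation needed), and---because $\bm{\varepsilon}(\bm{u})$ itself lies in $\polysym$---your injectivity step is the one-line self-test $L_{\bm{u}}(\bm{\varepsilon}(\bm{u}))=\int_E|\bm{\varepsilon}(\bm{u})|^2=0$, with the Navier condition killing the interior term. The two routes give the identical bound; yours avoids both the external dimension result and the bump-function argument, at the small cost of replacing the paper's stress-based space by the strain-based one (which is what makes the self-test land in $\polysym$).
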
}
\acmajor{\begin{proof}
Following~\cite{berrone2021lowest}, we define for each element $E$, the subspace of polynomials
\begin{align*}
    \tilde{\mathbb{H}}_{\ell+1}(E) = \{\bm{p} \in [\mathbb{P}_{\ell+1}(E)]^2: \nabla \cdot \bm{\sigma}(\bm{p}) = \bm{0}\}. 
\end{align*}
For a given $\ell$, this space is shown to have dimension $4\ell+6$ in~\cite{Cao:2009:advances}. We then consider the space $\bm{\sigma}(\tilde{\mathbb{H}}_{\ell+1}(E))$, and it can be shown that this space has dimension $4\ell+3$. Both $\Pker$ and $\bm{\sigma}(\tilde{\mathbb{H}}_{\ell+1}(E))$ are subspaces of $\polysym$, so the sum $\Pker + \bm{\sigma}(\tilde{\mathbb{H}}_{\ell+1}(E)) $ is also a subspace and the dimension is bounded by:
\begin{align*}
    \text{dim}(\polysym) &\geq \text{dim}(\Pker + \bm{\sigma}(\tilde{\mathbb{H}}_{\ell+1}(E)) )\\ &=\text{dim}(\Pker) + \text{dim}({\bm{\sigma}(\tilde{\mathbb{H}}_{\ell+1}(E))} - \text{dim}({\Pker \cap \bm{\sigma}(\tilde{\mathbb{H}}_{\ell+1}(E))}).
\end{align*}
Now we show that ${\Pker \cap \bm{\sigma}(\tilde{\mathbb{H}}_{\ell+1}(E))} = \{\bm{0}\}$. To this end, let $\bm{p} \in \tilde{\mathbb{H}}_{\ell+1}(E)$, and assume 
that $\bm{\sigma}(\bm{p}) \in \Pker$. Then we have for any $\bm{v}\in \bm{H}^1(E)$, 
\begin{align*}
    \int_{E}{\nabla \cdot \bm{\sigma}(\bm{p})\cdot (\bm{v}-P_r(\bm{v})) \ d\bm{x}} = 0.
\end{align*}
On applying the divergence theorem and using the definition of $\Pker$, we can write 
\begin{align*}
    \int_{\partial E}{(\bm{v}-P_r({\bm{v}}))\rvert_{\partial E}\cdot (\bm{\sigma}(\bm{p})\cdot \bm{n})\ ds} - \int_{E}{\bm{\varepsilon}(\bm{v}-P_r(\bm{v})):\bm{\sigma}(\bm{p})} \, d\bm{x} = -\int_{E}{\bm{\varepsilon}(\bm{v}):\bm{\sigma}(\bm{p})} = 0. 
\end{align*}
This is true for all $\bm{v}$, which implies that $\bm{\sigma}(\bm{p})=\bm{0}$. Otherwise, suppose this is not true, then following a similar argument from~\cite{berrone2021lowest}, there exists an open set $\omega \subset E$ such that $\bm{\sigma}(\bm{p}) \neq \bm{0}$ and in particular $\bm{p} \neq \bm{0}$ over $\omega$ . Now define a (smooth) bump function by:
\begin{align*}
    \begin{cases}
    -\nabla \cdot \bm{\sigma}(\bm{b}_\omega) = \bm{p} \quad &\text{in } \omega,  \\
    \bm{b}_\omega = \bm{0} \quad &\text{on } E\setminus \omega. 
    \end{cases}
\end{align*}
Then, we consider
\begin{align*}
    0=(\bm{\sigma}(\bm{p}),\bm{\varepsilon}(\bm{b}_{\omega}))_{E}=(\bm{\sigma}(\bm{p}),\bm{\varepsilon}(\bm{b}_{\omega}))_{\omega} = (\bm{\varepsilon}(\bm{p}),\bm{\sigma}(\bm{b}_{\omega}))_{\omega} \end{align*}
On applying the divergence theorem, we obtain
\begin{align*}
    0=(\bm{\varepsilon}(\bm{p}),\bm{\sigma}(\bm{b}_{\omega}))_{\omega} = \int_{\omega}{\bm{\varepsilon}(\bm{p}) : \bm{\sigma}(\bm{b}_{\omega}) \ d\bm{x}} &= \int_{\partial \omega}{(\bm{\sigma}(\bm{b}_\omega) \cdot \bm{n})\cdot \bm{p}\ ds} - \int_{\omega}{\bm{p}\cdot (\diverge{\bm{\sigma}(\bm{b}_{\omega})}) \ d\bm{x}} \\ 
    & = \int_{\omega}{\bm{p}\cdot \bm{p} \ d\bm{x}} > 0 ,
\end{align*}
which leads to a contradiction, and therefore $\bm{\sigma}(\bm{p}) =\bm{0}$ holds on $E$. This implies that
 $\Pker \cap \bm{\sigma}(\tilde{\mathbb{H}}_{\ell+1}(E)) = \{\bm{0}\}$.
Now it follows that
\begin{align*}
    \text{dim}({\Pker}) &\leq \text{dim}(\polysym) -\text{dim}({\bm{\sigma}(\tilde{\mathbb{H}}_{\ell+1}(E))})+ \text{dim}({\Pker \cap \bm{\sigma}(\tilde{\mathbb{H}}_{\ell+1}(E))})\\ &= \frac{3}{2}(\ell+1)(\ell+2) - (4\ell+3) \\ &= \frac{\ell}{2}(3\ell+1).    \qedhere
\end{align*}
\end{proof}}
\acmajor{Combining~\eqref{eq:pker_bound} and~\eqref{l_condition}, we get a sufficient bound on the number of vertices required for any $\ell$. In particular, we have a more restrictive bound: 
\begin{align}\label{eq:strong_l_condition}
    N_E \leq 2\ell +3.
\end{align}}

On using this value of $\ell$, we define the set of all functions $\bm{v} \in \bm{H}^1(E)$ that satisfy the property that the inner product of the function and any vector polynomial in $[\mathbb{P}_{\ell+1}(E)]^2$ is equal to that of the inner product with the energy projection. That is, we define the set $\largeset$ as
\begin{align}
   \largeset=\left\{ \bm{v} :  \int_{E}{\bm{v}\cdot \bm{p}\,d\bm{x}} = \int_{E}{\proj{\bm{v}}\cdot \bm{p} \,d\bm{x} } \ \ \forall \bm{p} \in [\mathbb{P}_{\ell+1}(E)]^2\right\}.\label{enrichment_condition}
\end{align}
We define the local enlarged virtual element space as:
\begin{align}
    \bm{V}_{1,\ell}^{E} := \left\{ \bm{v}_h \in \largeset : \Delta \bm{v}_h \in [\mathbb{P}_{\ell+1}(E)]^2 , \ \gamma^{e}(\bm{v}_h) \in [\mathbb{P}_{1}(e)]^2  \ \forall e \in \mathcal{E}_E , \ \bm{v}_h \in [C^{0}(\partial E)]^2\right\} \!, 
    \label{local_vem} 
\end{align}
where $\gamma^{e_i}(\cdot)$ is the trace of a function (its argument) on an edge $e_i$.
In the above space we require functions to be linear on the edges, in which case we can take the degrees of freedom to be the values of the function at the vertices of the polygon $E$. There will be a total of $2N_E$ degrees of freedom on each element $E$.

With the local space so defined, we define the global enlarged virtual element space as
\begin{align}
    \bm{V}_{1,\bm{\ell}} := \{\bm{v}_h \in [H^1(\Omega)]^2 : \bm{v}_h\rvert_E \in \bm{V}_{1,\ell}^{E} \ \ \text{for } \ell=\ell(E)\}.
\end{align}
For each $E$, we assign a suitable basis to the local virtual element space $\VEM$. Let $\{\phi_{i}\}$ be the set of generalized barycentric coordinates (canonical basis functions)~\cite{Hormann:2017:GBC} that satisfy $\phi_i(\bm{x}_j)= \delta_{ij}$. 
We express the components of  any  $\bm{v}_h \in \VEM$ as the sum of these basis functions:
\begin{subequations}\label{vem_basis_matrix}
\begin{align}
\bm{v}_h &= \begin{Bmatrix}
    v_h^1 \\ v_h^2
\end{Bmatrix}   
=\begin{bmatrix}
    \phi_1 & \phi_2 & \dots &\phi_{N_E} & 0 &0 & \dots & 0 \\
    0 & 0 & \dots & 0 & \phi_1 & \phi_2 &\dots & \phi_{N_E}
\end{bmatrix}\begin{Bmatrix}
    v^1_1 \\ v^1_2 \\ \vdots \\ v^2_{N_E}
\end{Bmatrix}
:= \bm{N}^v \tilde{\bm{v}}_h, \\
\intertext{where we define $\bm{N}^v$ as the matrix of vectorial basis functions:}
    \bm{N}^v &= \begin{bmatrix}
        \phi_1 & \phi_2 & \dots &\phi_{N_E} & 0 &0 & \dots & 0 \\
    0 & 0 & \dots & 0 & \phi_1 & \phi_2 &\dots & \phi_{N_E}
    \end{bmatrix} 
    := \begin{bmatrix}
        \bm{\varphi}_1
        & \dots & \bm{\varphi}_{N_E} 
        & \dots & \bm{\varphi}_{2N_E} 
    \end{bmatrix}.
\end{align}
\end{subequations}

We now define the weak form of the virtual element method on this space.  On defining a discrete bilinear operator $a^E_h : \VEM \times \VEM \to \mathbb{R} $ and a discrete linear functional $b^{E}_h: \VEM \to \mathbb{R}$, we seek the solution to the problem: find $\bm{u}_h \in \VEM$ such that
\begin{align}\label{vem_weak_problem}
a_h^E(\bm{u}_h,\bm{v}_h) = b^{E}_h(\bm{v}_h) \quad \forall \bm{v}_h \in \VEM .
\end{align}
Following~\cite{berrone2021lowest}, we introduce the local discrete bilinear form in matrix-vector form:
\begin{align}
    a_h^E(\bm{u}_h,\bm{v}_h) := \int_{E}{\left(\overline{\projLtwo{\bm{v_h}}}\right)^T \bm{C} \, \overline{\projLtwo{\bm{u}_h}} \, d\bm{x}},
\end{align}
with the associated global operator defined as 
\begin{align}\label{global_bilinearform}
    a_h(\bm{u}_h,\bm{v}_h) := \sum_{E}{a_h^E(\bm{u}_h,\bm{v}_h)}.
\end{align}
We also define a local linear functional by 
\begin{align}
    b^E_h(\bm{v}_h)= \int_{E}{\bm{v}_h^T\bm{f}_h \, d\bm{x}} + \int_{\Gamma_N \cap \partial E}{\bm{v}_h^T\bar{\bm{t}} \, ds},\label{local_force}
\end{align}
with the associated global functional 
\begin{align}
    b_h(\bm{v}_h) = \sum_{E}{b^E_h(\bm{v}_h)},
\end{align}
where $\bm{f}_h$ is some approximation to $\bm{f}$. For first order methods it is sufficient to consider the $L^2$ projection onto constants, namely 
$\bm{f}_h = \bm{\Pi}_0^0 \bm{f}$. 

\section{Numerical Implementation}\label{sec:implementation}
With the definitions of the discrete spaces and projections on hand, we now 
detail the implementation of the method. We 
present the derivation of the equations 
to compute the energy projection, the $L^2$ projection, and the element stiffness matrix. 

\subsection{Implementation of energy projector}
We start with the energy projection. From (\ref{energy_matrix_eq_1}), we have for $\alpha =4,5,6,$ the equation
\begin{align*}
    \int_{E}{\left(\bm{S}\proj{\bm{v}_h}\right)^T\left(\bm{CS}\bm{m}_{\alpha}\right)\,d\bm{x}} &= \int_{E}{\left(\bm{S}\bm{v}_h\right)^T\left(\bm{CS}\bm{m}_{\alpha}\right)\,d\bm{x}}.
\end{align*}
In particular, we are interested in the case when $\bm{v}_h = \bm{\varphi}_i$, the basis functions in $\VEM$.
By definition of the energy projection, $\proj{\bm{\varphi}_i}$ is a vector polynomial of degree one. Therefore, we can expand it in terms of its basis functions:
\begin{align}
    \proj{\bm{\varphi}_i} = 
    \sum_{\beta=1}^{6}{s^i_{\beta}\bm{m}_{\beta}}.\label{expand_basis}
\end{align}
We can express the left-hand side as 
\begin{align}
    \int_{E}{\left(\bm{S}\proj{\bm{\varphi}_i}\right)^T(\bm{CS}\bm{m}_{\alpha})\,d\bm{x}} = \sum_{\beta=1}^{6}s^i_{\beta}\int_{E}{\left(\bm{S}\bm{m}_\beta\right)^T(\bm{CS}\bm{m}_{\alpha})\,d\bm{x}}.
\end{align}
Define the matrix $\tilde{\bm{G}}$ for $\beta=1,2,\dots,6$, and $\alpha=4,5,6$ by 
\begin{align}
    \tilde{\bm{G}}_{\alpha \beta} = \int_{E}{\left(\bm{S}\bm{m}_\beta\right)^T(\bm{CS}\bm{m}_{\alpha})\, d\bm{x}}.
\end{align}
Similarly, the matrix $\tilde{\bm{B}}$ representing the right-hand side of~\eqref{energy_matrix_eq_1} becomes
\begin{align}
    \tilde{\bm{B}}_{\alpha i}= \int_{E}{\left(\bm{S}\bm{\varphi}_i\right)^T\left(\bm{CS}\bm{m}_{\alpha}\right)\,d\bm{x}}.
\end{align}

To fully define these matrices for all $\alpha$, we consider
the additional projection equation~\eqref{energy_eq_2}. When $\bm{v} = \bm{\varphi}_i$, we obtain 

\begin{align*}
     \frac{1}{N_E}\sum_{j=1}^{N_E}{\proj{\bm{\varphi}_i}(\bm{x}_j)\cdot \bm{m}_\alpha(\bm{x}_j)} &= \frac{1}{N_E}\sum_{j=1}^{N_E}{\bm{\varphi}_i(\bm{x}_j)\cdot \bm{m}_\alpha(\bm{x}_j)}.
\end{align*}
As we have done previously, on expanding $\proj{\bm{\varphi}_i}$ with~\eqref{expand_basis} leads to
\begin{align}
        \sum_{\beta=1}^{6}{ s^i_{\beta}\frac{1}{N_E} \sum_{j=1}^{N_E}{\bm{m}_\beta(\bm{x}_j)\cdot \bm{m}_\alpha(\bm{x}_j)}} &= \frac{1}{N_E}\sum_{j=1}^{N_E}{\bm{\varphi}_i(\bm{x}_j)\cdot \bm{m}_\alpha(\bm{x}_j)}.
\end{align}
Now we can define the remaining $\alpha =1,2,3$ terms of the matrices $\tilde{\bm{G}}$ and $\tilde{\bm{B}}$ as 
\begin{align}
    \tilde{\bm{G}}_{\alpha \beta} = \frac{1}{N_E}\sum_{j=1}^{N_E}{\bm{m}_\beta(\bm{x}_j)\cdot \bm{m}_\alpha(\bm{x}_j)},
    \quad \tilde{\bm{B}}_{\alpha i} =\frac{1}{N_E}\sum_{j=1}^{N_E}{\bm{\varphi}_i(\bm{x}_j)\cdot \bm{m}_\alpha(\bm{x}_j)}. 
\end{align}
Combining the results, we obtain $\tilde{\bm{G}}$ for all $\beta=1,2,\dots,6$:
\begin{subequations}
\begin{align}
    \tilde{\bm{G}}_{\alpha \beta} =
    \begin{cases}
    \frac{1}{N_E}\sum_{j=1}^{N_E}{\bm{m}_\beta(\bm{x}_j)\cdot \bm{m}_\alpha(\bm{x}_j)} \quad (\alpha =1,2,3) \\
    \int_{E}{\left(\bm{S}\bm{m}_\beta\right)^T(\bm{CS}\bm{m}_{\alpha})d\bm{x}} \quad 
    (\alpha= 4,5,6),
    \end{cases}
\intertext{and for all $i=1,2\dots,2N_E$, we have}
    \tilde{\bm{B}}_{\alpha i} =
    \begin{cases}
    \frac{1}{N_E}\sum_{j=1}^{N_E}{\bm{\varphi}_i(\bm{x}_j)\cdot \bm{m}_\alpha(\bm{x}_j)} \quad (\alpha= 1,2,3) \\
    \int_{E}{\left(\bm{S}\bm{\varphi}_i\right)^T\left(\bm{CS}\bm{m}_{\alpha}\right)d\bm{x}} \quad (\alpha= 4,5,6) . \label{B_tilde}
    \end{cases}
\end{align}
\end{subequations}
After combining these equations, we can determine
the coefficients for the projection as the solution of the system:
\begin{align}
    \tilde{\bm{G}} \proj = \tilde{\bm{B}},
\end{align}
where $(\proj{})_{\beta i} = s^i_{\beta}$. We start by considering the matrix $\tilde{\bm{G}}$. For $\alpha=1,2,3$, $\tilde{\bm{G}}$ is 
the sum of polynomials evaluated at the vertex points, 
which can be directly computed. 
For $\alpha=4,5,6$, since
the basis \acrev{functions} $\bm{m}_{\alpha}$ are linear, the matrix differential operator 
acting on $\bm{m}_{\alpha}$ will result in a constant vector. For a constant material matrix $\bm{C}$, the expression $\left(\bm{S}\bm{m}_\beta\right)^T(\bm{C} \bm{S}\bm{m}_{\alpha})$ is a constant matrix. Therefore, we can write: 
\begin{align*}
    \tilde{\bm{G}}_{\alpha \beta} = \left(\bm{S}\bm{m}_\beta\right)^T(\bm{CS}\bm{m}_{\alpha}) |E| \quad (\alpha=4,5,6) .
\end{align*}
On using~\eqref{B_tilde} and simplifying, we can write $\tilde{\bm{B}}$ for $\alpha = 1,2,3$ as
\begin{align*}
    \tilde{\bm{B}}_{\alpha i} = 
    \begin{cases}
   \frac{1}{N_E}\sum_{j=1}^{N_E}{\begin{pmatrix}
      \phi_{i}(\bm{x}_j) \\ 0
   \end{pmatrix}} \cdot \bm{m}_{\alpha}(\bm{x}_j) =\frac{1}{N_E}{m^{1}_\alpha(\bm{x}_i)} \quad (i=1,2,\dots, N_E) \\
    \frac{1}{N_E}\sum_{j=1}^{N_E}{\begin{pmatrix}
         0 \\ \phi_{i}(\bm{x}_j)
    \end{pmatrix}} \cdot \bm{m}_{\alpha}(\bm{x}_j)=\frac{1}{N_E}{m^{2}_\alpha(\bm{x}_i)} \quad (i=N_E+1,N_E+2,\dots,2N_E),
    \end{cases}
\end{align*}
where  $m^{k}_\alpha$ is the $k$-th component of $\bm{m}_{\alpha}$. 
For $\alpha = 4,5,6$, we can apply the definition of the matrix differential operator 
and use the divergence theorem to write
\begin{align*}
    \tilde{\bm{B}}_{\alpha i} = \int_{E}{\left(\bm{S}\bm{\varphi}_i\right)^T\left(\bm{CS}\bm{m}_{\alpha}\right)\,d\bm{x}}
    =\left(\int_{E}{
    \left(\bm{S}\bm{\varphi}_i\right)^T\,d\bm{x}}\right)\bm{CS}\bm{m}_{\alpha}
    = \sum_{j=1}^{N_E} \left(\int_{e_j}{\bm{\varphi}_i}^T\acmajor{\Nmatrix} \,ds\right) \bm{CS}\bm{m}_{\alpha},
\end{align*}
where $e_j$ is the $j$-th edge of the element $E$ and $\acmajor{\Nmatrix}$ is the matrix of normal components given in~\eqref{normal_matrix}.
On simplification, we obtain for $\alpha=4,5,6$,
\begin{align*}
\tilde{\bm{B}}_{\alpha i}=
    \begin{cases}
     \Bigl(\int_{e_{i-1}}{\begin{pmatrix}
        \phi_i n_1^{(i-1)} & 0 & \phi_i n_2^{(i-1)}
    \end{pmatrix}\,ds} \\ \qquad + \int_{e_{i}}{\begin{pmatrix}
        \phi_i n_1^{(i)} & 0 & \phi_i n_2^{(i)}
    \end{pmatrix}\,ds} \Bigr) \bm{CS}\bm{m}_{\alpha} \quad (i=1,2,\dots ,N_E)\\
    \Bigl(\int_{e_{i-1}}{\begin{pmatrix}
        0 &\phi_{i}n_2^{(i-1)} & \phi_{i}n_1^{(i-1)}
    \end{pmatrix}ds} \\ \qquad + \int_{e_{i}}{\begin{pmatrix}
        0 & \phi_{i}n_2^{(i)} & \phi_{i} n_1^{(i)}
    \end{pmatrix}ds} \Bigr) \bm{CS}\bm{m}_{\alpha} \quad (i=N_E+1,N_E+2,\dots,2N_E).
    \end{cases}
\end{align*}
These are integrals of a linear function over a line segment, which are exactly computed using
a two-point Gauss-Lobatto quadrature scheme.

\subsection{Implementation of \texorpdfstring{L\textsuperscript{2}}{L2} projector }
Now that we have a computable form of the energy projection, we can construct the $L^2$ projection. From~\eqref{L2_matrix_eq}, we have
\begin{align}\label{eq:L2proj}
        \int_{E}{\left(\overline{\projLtwo{\bm{v}_h}}\right)^T \overline{\bm{\varepsilon}^p} \, d\bm{x}} = \int_{\partial E}{\bm{v}_h^T \acmajor{\Nmatrix}\overline{\bm{\varepsilon}^p} \,ds} + \int_{E}{\bm{v}_h^T\partial \overline{\bm{\varepsilon}^p}\,d\bm{x}}. 
\end{align}

On expanding $\bm{v}_h$ in terms of its basis in $\VEM$, we obtain
$\bm{v}_h = \bm{N}^v\tilde{\bm{v}}_h$. We can also expand the symmetric function $\overline{\bm{\varepsilon}^p}$ in terms of the polynomial basis in $\polysym$ with $\overline{\bm{\varepsilon}^p} = \bm{N}^p \tilde{\bm{\varepsilon}}^p$. Following~\cite{elastic}, we also define a matrix $\bm{\Pi}^m$ such that we can write the projected strain in terms of the polynomial basis in $\polysym$. In particular, we write   
\begin{align*}
\overline{\projLtwo{\bm{v}_h}} = \bm{N}^p\bm{\Pi}^m \tilde{\bm{v}}_h, 
\end{align*}
Substituting these into~\eqref{eq:L2proj}, we obtain 
\begin{align*}
    \int_{E}{\left( \bm{N}^p\bm{\Pi}^m \tilde{\bm{v}}_h\right)^T \bm{N}^p \tilde{\bm{\varepsilon}}^p \, d\bm{x}} = \int_{\partial E}{\left(\bm{N}^v\tilde{\bm{v}}_h\right)^T \acmajor{\Nmatrix}\bm{N}^p \tilde{\bm{\varepsilon}}^p \, ds} + \int_{E}{\left(\bm{N}^v\tilde{\bm{v}}_h\right)^T\bm{\partial} \bm{N}^p \tilde{\bm{\varepsilon}}^p \, d\bm{x}} ,
\end{align*}
which on simplifying becomes 
\begin{align*}
        \int_{E}{\tilde{\bm{v}}_h^T\left(\bm{\Pi}^m \bm{N}^p\right)^T\bm{N}^p \tilde{\bm{\varepsilon}}^p \, d\bm{x}} = \int_{\partial E}{\tilde{\bm{v}}_h^T(\bm{N}^v)^T\acmajor{\Nmatrix}\bm{N}^p\tilde{\bm{\varepsilon}}^p \, ds} + \int_{E}{\tilde{\bm{v}}_h^T(\bm{N}^v)^T\bm{\partial} \bm{N}^p\tilde{\bm{\varepsilon}}^p \, d\bm{x}} .
\end{align*}
Since this is true for all $\tilde{\bm{v}}_h$ and $\tilde{\bm{\varepsilon}}^p$, we can rewrite the equation as: 
\begin{align*}
        (\tilde{\bm{\varepsilon}}^p)^T\left(\int_{E}{\left(\bm{N}^p\right)^T\bm{N}^p d\bm{x} }\right)\bm{\Pi}^m\tilde{\bm{v}}_h = (\tilde{\bm{\varepsilon}}^p)^T\left(\int_{\bm{\partial} E}{\left(\acmajor{\Nmatrix}\bm{N}^p\right)^T\bm{N}^v ds } - \int_{E}{\left(\bm{\partial} \bm{N}^p\right)^T\bm{N}^v d\bm{x}}\right)\tilde{\bm{v}}_h.    
\end{align*}
So now we can solve for the projection matrix $\bm{\Pi}^m$ via 
\begin{subequations}\label{L2_projection_matrices}
\begin{align}
    &\qquad \qquad \qquad \bm{\Pi}^m = \bm{G^{-1}B},\label{eq:Pi-m} \\ 
    \intertext{where $\bm{G}$ and $\bm{B}$ are defined as}
    \bm{G} &:= \int_{E} {\left(\bm{N}^p\right)^T\bm{N}^p \, d\bm{x} }, \label{G_matrix}\\
    \bm{B} &:=\int_{\partial E} 
    {\left(\acmajor{\Nmatrix}\bm{N}^p\right)^T\bm{N}^v  \, ds}  - \int_{E}{\left(\bm{\partial} \bm{N}^p\right)^T\bm{N}^v \, d\bm{x}}\label{B_matrix}. 
\end{align}
\end{subequations}
We can explicitly construct the forms for $\bm{G}$ and $\bm{B}$. 
From~\eqref{G_matrix}, we expand the integrand $(\bm{N}^p)^T \bm{N}^p$, 
where $\bm{N}^p$ is given by~\eqref{matrix_N_p}.
If we let $\bm{I}$ be the $3\times 3$ identity matrix, we can write $\bm{N}^p$ as 
\begin{equation*}
        \bm{N}^p := 
    \begin{bmatrix}
    \bm{I} & \xi \bm{I} &\eta \bm{I} \dots \eta^{\ell} \bm{I} 
    \end{bmatrix}
\end{equation*}
and the product $(\bm{N}^p)^T \bm{N} ^p$ can be written in compact form as: 
\begin{align*}
    {(\bm{N}^p)^T\bm{N}^p} = \begin{bmatrix}
    \bm{I} & \xi \bm{I} & \eta \bm{I} & \dots & \eta^{l} \bm{I} \\ 
    \xi \bm{I} & \xi^2 \bm{I} & \xi \eta \bm{I} & \dots & \xi \eta ^{\ell} \bm{I} \\ 
    \eta \bm{I} & \xi \eta \bm{I} & \ddots  & \vdots \\
    \vdots &\vdots & \dots \\ 
    \eta^\ell \bm{I} & \xi \eta^\ell \bm{I} & \eta^{\ell+1} \bm{I} &\dots & \eta^{2\ell}\bm{I} 
    \end{bmatrix}.
\end{align*}
Integrating each term of the matrix, we find that we only need to determine integrals of the form
\begin{equation*}
    \int_{E}{\xi^r \eta ^k \, d\bm{x}}  \quad \text{for } 0 \leq r+k\leq 2\ell,
\end{equation*}
which can be computed either by 
partitioning $E$
into triangles and then adopting a Gauss quadrature rule on triangles or by using the schemes developed 
in~\cite{2021,chin:hal-01426581}.

The construction of the $\bm{B}$ matrix reveals the major difference between the 
stabilization-free method and a standard VEM for plane elasticity. 
For the first term in~\eqref{B_matrix}, we expand the integral over $\partial E$ as the sum of integrals over edge $e_i$:
\begin{equation*}
    \int_{\partial E}{\left(\acmajor{\Nmatrix}\bm{N}^p\right)^T\bm{N}^v \, ds } = \sum_{i=1}\int_{e_i}{\left(\acmajor{\Nmatrix}\bm{N}^p\right)^T\bm{N}^v \rvert_{e_i}\,ds}.
\end{equation*}
Now we examine $\bm{N}^v \rvert_{e_i}$, 
\begin{equation*}
    \bm{N}^v \rvert_{e_i} = \begin{bmatrix}
    \phi_1 & \phi_2 &\dots& \phi_{N_E} & 0 &\dots &0 \\
    0 & 0 & \dots & 0 & \phi_1 & \phi_2 & \dots &\phi_{N_E} 
    \end{bmatrix}\biggr\rvert_{e_i}.
\end{equation*}
By definition of the Lagrange property, each $\phi_i$ is only nonzero when evaluated at the $i$-th degree of freedom, therefore the only contributions along the edge $e_i$ \acrev{are from} $\phi_i \rvert_{e_i}$ and $\phi_{i+1} \rvert_{e_i}$. As a consequence, $\bm{N}^v \rvert_{e_i}$ has only four nonzero elements, namely
\begin{equation}\label{Nv}
        \bm{N}^v \rvert_{e_i} = \begin{bmatrix}
    0 & 0 & \dots& \phi_i\rvert_{e_i} & \phi_{i+1}\rvert_{e_i}&   \dots&  0  & 0 & \dots & 0 \\
    0 & 0 &\dots & 0 & 0 & \dots & \phi_{i}\rvert_{e_i} &\phi_{i+1}\rvert_{e_i} & \dots & 0
    \end{bmatrix}.
\end{equation}
We note from~\eqref{local_vem} that $\phi_i$ and $\phi_{i+1}$ are linear functions along the edges so they can be represented exactly via a parametrization of $e_i$. We also note that the product $\acmajor{\Nmatrix}\bm{N}^p$ is at most \acrev{polynomials} of degree $\ell$, so that the terms of the form $(\acmajor{\Nmatrix} \bm{N}^p)^T \bm{N}^v \rvert_{e_i}$ are at most a polynomial of degree $\ell+1$. This suggests that if we parametrize $e_i$ by $t \in [-1,1]$, we can use a one-dimensional
Gauss quadrature rule to compute these integrals.
In particular, let $r_i(t) : [-1,1] \to e_i$ be a parametrization of the $i$-th edge and let 
$\{\omega_1,\cdots,\omega_r\}$, $\{t_1,\cdots,t_r\}$ be the 
associated Gauss quadrature weights and nodes. Then, after simplifications we have 
\begin{align*}
    \int_{e_i}{\left(\acmajor{\Nmatrix}\bm{N}^p\right)^T\bm{N}^v \rvert_{e_i} \, ds} = \frac{|e_i|}{2}\int_{-1}^{1}{\left(\acmajor{\Nmatrix}\bm{N}^p\right)^T\bm{N}^v(r_i(t)) \, dt}
    =\frac{|e_i|}{2}\sum_{j=1}^{r}{\omega_j\left(\acmajor{\Nmatrix}\bm{N}^p\right)^T\bm{N}^v(r_i(t_j))}.
\end{align*}
\acmajor{On} examining the second term in~\eqref{B_matrix}, we note that
$\bm{\partial}$ is a matrix operator of first order derivatives, and $\bm{N}^p$ is a matrix of polynomials of degree less than or equal to $\ell$. This implies that the product $\bm{\partial N^p}$ is a matrix polynomial of degree at most $\ell-1$. Then the product
$(\bm{\partial N^p)}^T\bm{N}^v$ contains terms of the form $\int_{E}{\bm{p}_{\ell-1}\cdot\bm{\varphi}_j}$. On applying the definition of the space~\eqref{local_vem}, we can replace these integrals with the integrals of the elliptic projection, that is
\begin{align*}
    \int_{E}{\bm{p}_{\ell-1}\cdot \bm{\varphi}_j \, d\bm{x}} = \int_{E}{\bm{p}_{\ell-1}\cdot \proj{\bm{\varphi}_j} \, d\bm{x}} ,
\end{align*}
which in matrix form can be written as
\begin{subequations}\label{Bmatrix_2}
\begin{align}
    \label{Bmatrix_2-a}
    \int_{E}{(\bm{\partial} \bm{N}^p)^T\bm{N}^v \, d\bm{x}} &= \int_{E}{(\bm{\partial} \bm{N}^p)^T \proj{\bm{N}^v} \, d\bm{x}} ,\\
\intertext{where we have the natural definition}
    \label{Bmatrix_2-b}
        \proj{\bm{N}^v} &:= 
    \begin{bmatrix}
        \proj{\bm{\varphi}_1} & \proj{\bm{\varphi}_2} & \dots & \proj{\bm{\varphi}_{2N_E}}
    \end{bmatrix}.
\end{align}
\end{subequations}
The integral in~\eqref{Bmatrix_2-a} is computed using a cubature scheme.
With these matrices, we can compute the $L^2$ projection $\overline{\projLtwo{\bm{v}_h}}$ 
using~\eqref{L2_projection_matrices}.

\subsection{Implementation of element stiffness matrix}
To construct the element stiffness, we first rewrite the bilinear form $a^E_h$ in terms of the matrices that we have constructed:
\begin{align*}
    a_h^E(\bm{u}_h,\bm{v}_h) &:= \int_{E}{\left(\overline{\projLtwo{\bm{v}_h}}\right)^T \bm{C}\, \overline{\projLtwo{\bm{u}_h}} \, d\bm{x}} \\ 
    &= \int_{E}{\left(\bm{N}^p\bm{\Pi}^m\tilde{\bm{v}}_h\right)^T\bm{C} \left(\bm{N}^p\bm{\Pi}^m\tilde{\bm{u}}_h\right) \, d\bm{x}} \\ 
    &= \tilde{\bm{v}}_h^T \left(\bm{\Pi}^m\right)^T
    \left(\int_{E}{\left(\bm{N}^p\right)^T}\bm{C}\bm{N}^p \, d\bm{x}\right)\bm{\Pi}^m \tilde{\bm{u}}_h .
\end{align*}
Then, define the element stiffness matrix $\bm{K}_E$ by
\begin{align}
    \bm{K}_E :=\left(\bm{\Pi}^m\right)^T\left(\int_{E}{\left(\bm{N}^p\right)^T}\bm{C}\bm{N}^p \, d\bm{x}\right)\bm{\Pi}^m ,
\end{align}
where $\bm{\Pi}^m$ is given in~\eqref{L2_projection_matrices}.

\subsection{Implementation of element force vector}
We construct the forcing term given in~\eqref{local_force} as 
\begin{align*}
    b^E_h(\bm{v}_h)= \int_{E}{\bm{v}_h^T\bm{f}_h\,d\bm{x}} + \int_{\Gamma_N \cap \partial E}{\bm{v}_h^T\bar{\bm{t}}\,ds},
\end{align*}
which is rewritten in the form
\begin{equation}
    b_{h}^E(\bm{v}_h) =\left(\tilde{\bm{v}}_h\right)^T\left(\int_{E}{(\bm{N}^v)^T \bm{f}_h \, d\bm{x}} + \int \displaylimits_{\partial E \cap \Gamma_N}{(\bm{N}^v)^T \bar{\bm{t}}  \, ds}\right).
\end{equation}
Since we are using a low-order scheme, we use the approximation
\begin{align*}
    \int_{E}{(\bm{N}^v)^T f_h \, d\bm{x}}  &\approx \overline{(\bm{N}^v)^T} \int_{E}{\bm{f}_h \, d\bm{x}} \approx |E|\overline{(\bm{N}^v)^T} \bm{f}(\bm{x}_E),
\end{align*}
where 
$\overline{(\bm{N}^v)^T}$ is the matrix of average values of $\phi$.
Specifically, denoting the $j$-th vertex by $\bm{x}_j$, we define the average value as  
\begin{equation*}
    \overline{\phi} = \frac{1}{N_E}\sum_{j=1}^{N_E}{\phi(\bm{x}_j)},
\end{equation*}
and let
\begin{align*}
    \overline{\bm{N}^v} = 
        \begin{bmatrix}
        \overline{\phi_1} & \overline{\phi_2} & \dots & \overline{\phi_{N_E}} & 0 & 0 & \dots & 0 \\ 
        0 & 0 & \dots & 0 & \overline{\phi_1} & \overline{\phi_2} & \dots & \overline{\phi_{N_E}} 
    \end{bmatrix} 
    =     \begin{bmatrix}
    \frac{1}{N_E} & \frac{1}{N_E} & \dots& \frac{1}{N_E} & 0 & 0& \dots& 0 \\ 
    0 & 0 & \dots & 0 & \frac{1}{N_E} & \frac{1}{N_E} & \dots &\frac{1}{N_E} 
    \end{bmatrix}.
\end{align*}
\acmajor{For constant tractions, we obtain a closed-form solution for the traction integral:}
\begin{align*}
    \left(\int \displaylimits_{\partial E \cap \Gamma_N}{(\bm{N}^v)^T  \,ds}\right)\bar{\bm{t}} =\left( \sum_{e_j \in \partial E} {\int_{e_j}(\bm{N^{v}})^T\rvert_{e_j} \, ds}\right) \bar{\bm{t}}.
\end{align*}
Now applying a similar argument as in (\ref{Nv}), we can simplify 
\acmajor{this} integral as 
\begin{align*}
    \int_{e_j}(\bm{N^{v}})^T\rvert_{e_j} \, ds &= 
    |e_j|
    \begin{bmatrix}
    0 & 0 & \dots & \frac{1}{2} & \frac{1}{2} & \dots & 0 & 0 & \dots & 0 \\ 
    0 & 0 & \dots & 0 & 0 &\dots &\frac{1}{2} & \frac{1}{2} & \dots & 0
    \end{bmatrix}.
\end{align*}

\section{Theoretical Results}\label{sec:theory}
We examine the well-posedness of the discrete problem~\eqref{vem_weak_problem} and
derive error estimates in the $L^2$ norm and energy seminorm. To simplify the analysis we resort to the study of the boundary-value problem with homogeneous Dirichlet boundary data.
We expect the results can be extended to the inhomogeneous case. 

\subsection{Well-posedness of discrete problem}
The approach follows ideas from~\cite{berrone2021lowest}, and we start by showing that the energy seminorm is \acrev{equivalent with the chosen norm for the space} $\bm{V}_{1,\bm{\ell}}$, and use this norm to show that the bilinear form 
in~\eqref{global_bilinearform} satisfies the properties of the Lax-Milgram theorem. We begin by first defining a candidate discrete norm operator:
\begin{definition}
Let $a_h$ be the bilinear form defined in~\eqref{global_bilinearform}, then define an operator $\|.\|_{\bm{\ell}}: \bm{V}_{1,\bm{\ell}} \to \bm{\mathbb{R}}$ by 
\begin{align}
    \acrev{\|\bm{u}\|_{\bm{\ell}} := \left(a_h(\bm{u},\bm{u})\right)^{\frac{1}{2}} 
    = \left(\sum_{E}{a_h^E(\bm{u},\bm{u})}\right)^{\frac{1}{2}}.}\label{l_norm}
\end{align}
\end{definition}
For specific $\bm{\ell}$ values, this operator is a norm and is equivalent to the natural norm in the space $[H^{1}_0(\Omega)]^2$. The main difficulty is showing that the operator is positive definite, i.e., $\|\bm{u}\|_{\bm{\ell}} = 0 \implies \bm{u}=\bm{0}$. To this end, we introduce a theorem given in~\cite{berrone2021lowest}: 

\begin{theorem}\label{projeqzero}
Let $E$ be any element in the space, and $\bm{u} \in \VEM$. Choose $\ell\in \mathbb{N}$ satisfying 
\acmajor{\begin{align*}
    &\frac{3}{2}(\ell+1)(\ell+2) - \text{dim}(\Pker) \geq 2N_E -3,\\
\intertext{or in general choose $\ell\in \mathbb{N}$ satisfying}
    &\qquad \qquad N_E\leq 2\ell+3,
\end{align*}}
then we have 
\begin{align}
    \projLtwo{\bm{u}} = \bm{0} \implies \bm{\varepsilon}(\bm{u}) = \bm{0}.
\end{align}
\end{theorem}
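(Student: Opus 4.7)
My plan is to decompose $\bm{u} = \proj{\bm{u}} + \bm{w}$ with $\bm{w} := \bm{u} - \proj{\bm{u}}$ and prove separately that $\bm{\varepsilon}(\proj{\bm{u}}) = \bm{0}$ and $\bm{w} \equiv \bm{0}$. For the first, I would specialize the orthogonality condition implied by $\projLtwo{\bm{u}} = \bm{0}$, namely $\int_E \bm{\varepsilon}^p : \bm{\varepsilon}(\bm{u})\,d\bm{x} = 0$ for all $\bm{\varepsilon}^p \in \polysym$, to the three constant symmetric test stresses $\bm{\varepsilon}^p = \bm{\sigma}(\bm{m}_\alpha)$ with $\alpha = 4, 5, 6$. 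Combined with the defining orthogonality~\eqref{energy_eq_1} of the energy projection, this yields $\int_E \bm{\sigma}(\bm{m}_\alpha) : \bm{\varepsilon}(\proj{\bm{u}})\,d\bm{x} = 0$ for a spanning set of constant stresses. Since $\bm{\varepsilon}(\proj{\bm{u}})$ is itself a constant symmetric tensor and $\mathbb{C}$ is positive definite, this forces $\bm{\varepsilon}(\proj{\bm{u}}) = \bm{0}$, so $\proj{\bm{u}}$ is a rigid-body mode. Because $\proj{}$ is idempotent on $[\mathbb{P}_1(E)]^2$, it follows that $\proj{\bm{w}} = \bm{0}$ and $\projLtwo{\bm{w}} = \projLtwo{\bm{u}} = \bm{0}$.

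Next, I would reduce the vanishing of $\bm{w}$ to a statement on its boundary trace. Integration by parts in $\int_E \bm{\varepsilon}^p : \bm{\varepsilon}(\bm{w})\,d\bm{x} = 0$, together with the enhancement condition~\eqref{enrichment_condition} (applied with $\proj{\bm{w}} = \bm{0}$ to kill the volume integral against $\nabla \cdot \bm{\varepsilon}^p \in [\mathbb{P}_{\ell-1}(E)]^2 \subset [\mathbb{P}_{\ell+1}(E)]^2$), produces
\[
\int_{\partial E} \bm{w} \cdot (\bm{\varepsilon}^p \cdot \bm{n})\,ds = 0 \quad \forall \bm{\varepsilon}^p \in \polysym.
\]
The piecewise-linear trace $\bm{w}|_{\partial E}$ has $2N_E$ scalar degrees of freedom, and the vertex-moment conditions $\tfrac{1}{N_E}\sum_j \bm{w}(\bm{x}_j) \cdot \bm{m}_\alpha(\bm{x}_j) = 0$ for $\alpha = 1, 2, 3$ built into $\proj{\bm{w}} = \bm{0}$ via~\eqref{P0_condition} already pin the rigid-body component of $\bm{w}|_{\partial E}$ through three linearly independent constraints on any admissible polygon, in particular forcing the associated rigid-body projection $P_r(\bm{w})$ to vanish.

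The core of the proof would be a dimension count for the remaining constraints. By the defining property of $\Pker$, test strains $\bm{\varepsilon}^p \in \Pker$ couple to $\bm{w}|_{\partial E}$ in the boundary identity only through $P_r(\bm{w})$, which vanishes; hence they contribute no new constraint. The genuinely new constraints come from a complement of $\Pker$ in $\polysym$, of dimension $\tfrac{3}{2}(\ell+1)(\ell+2) - \dim(\Pker) \geq 2N_E - 3$ by the standing hypothesis on $\ell$. Together with the three vertex-moment constraints, this gives $2N_E$ constraints on the $2N_E$-dimensional trace space, which then vanishes. The main obstacle is to verify that these constraints are genuinely linearly independent, not merely numerous; I would establish this by a contradiction argument along the lines of the bump-function construction in the proof of the preceding lemma on $\dim(\Pker)$, producing for any nontrivial admissible boundary trace an explicit test strain in the complement of $\Pker$ that detects it.

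Finally, with $\bm{w}|_{\partial E} = \bm{0}$, the VEM definition~\eqref{local_vem} giving $\Delta \bm{w} \in [\mathbb{P}_{\ell+1}(E)]^2$, and the enhancement~\eqref{enrichment_condition} applied with $\proj{\bm{w}} = \bm{0}$, integration by parts yields
\[
\int_E |\nabla \bm{w}|^2\,d\bm{x} = -\int_E \bm{w} \cdot \Delta \bm{w}\,d\bm{x} = -\int_E \proj{\bm{w}} \cdot \Delta \bm{w}\,d\bm{x} = 0,
\]
so $\bm{w} \equiv \bm{0}$ on $E$, and $\bm{\varepsilon}(\bm{u}) = \bm{\varepsilon}(\proj{\bm{u}}) + \bm{\varepsilon}(\bm{w}) = \bm{0}$.
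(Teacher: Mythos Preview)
Your opening and closing steps match the paper's argument. Lemma~\ref{lemma1} establishes $\bm{\varepsilon}(\proj{\bm{u}})=\bm{0}$ by exactly the specialization to constant test stresses you describe, and the reduction to the boundary identity is~\eqref{bilinearequalzero}. Your final passage from $\bm{w}|_{\partial E}=\bm{0}$ to $\bm{w}\equiv\bm{0}$ via integration against $\Delta\bm{w}\in[\mathbb{P}_{\ell+1}(E)]^2$ and the enhancement condition~\eqref{enrichment_condition} is correct and in fact more explicit than the paper's one-line appeal to unisolvence of the degrees of freedom.

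The core step, however, differs substantially. The paper does not argue by a dimension count; it mounts an inf-sup argument. It extends the boundary trace to a piecewise-linear function on a triangulation of $E$ (the spaces $Q(\partial E)$, $R_Q(E)$, $[\bm{V}]^2$ with broken norms), proves continuity of the pairing $b(\bm{v},\bm{Q})=\int_{\partial E}\bm{v}\cdot(\bm{Q}\cdot\bm{n})\,ds$, and then invokes a Fortin-operator construction from~\cite{berrone2021lowest} (Proposition~\ref{prop2} and the two that follow) to obtain the discrete inf-sup bound~\eqref{inf-sup} with $h_E$-independent constant, from which $\bm{w}|_{\partial E}=\bm{0}$ follows. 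Your dimension count is the natural heuristic motivating condition~\eqref{l_condition}, and you correctly flag that numerosity of constraints does not give independence. The surjectivity you actually need---for every nonzero piecewise-linear trace with vanishing rigid part, some $\bm{\varepsilon}^p\in\polysym$ detects it through the boundary pairing---is precisely the qualitative content of the inf-sup condition. The bump-function device from the $\dim(\Pker)$ lemma is not the right tool here: it produces non-polynomial test functions, whereas you must land in $\polysym$; the Fortin operator is the standard construction that does this. What the paper's heavier apparatus additionally buys is an $h_E$-uniform inf-sup constant, the natural ingredient for mesh-independent stability bounds downstream, whereas a bare dimension count would yield only the qualitative implication stated in the theorem.
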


To prove this theorem, we introduce the following lemma:
\begin{lemma}\label{lemma1}
Let $\bm{u} \in \VEM$, with $\ell\geq 1$, then the following implication holds
\begin{align}
    \projLtwo{\bm{u}} =\bm{0} \implies \bm{\varepsilon}(\proj{\bm{u}})=\bm{0} 
\end{align}
\end{lemma}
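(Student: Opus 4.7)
The plan is to use the hypothesis $\projLtwo{\bm{u}}=\bm{0}$ to produce a set of three vanishing integrals which, upon invoking the defining orthogonality of the energy projection, force $\bm{\varepsilon}(\proj{\bm{u}})$ to vanish when tested against three linearly independent constant symmetric tensors. The whole argument hinges on the observation that the stresses $\bm{\sigma}(\bm{m}_\alpha)$ for $\alpha=4,5,6$ are constant symmetric matrices and therefore lie in $\polysym$ for every $\ell\geq 1$, so they are admissible test tensors in the $L^2$ projection identity~\eqref{defL2proj-a}.

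First I would take $\bm{\varepsilon}^p=\bm{\sigma}(\bm{m}_\alpha)$ for $\alpha=4,5,6$ in~\eqref{defL2proj-a}. Since $\bm{m}_\alpha$ is a polynomial of degree at most one, $\bm{\varepsilon}(\bm{m}_\alpha)$ is a constant symmetric matrix, and the constitutive map $\mathbb{C}$ preserves this property, so $\bm{\sigma}(\bm{m}_\alpha)\in[\mathbb{P}_0(E)]^{2\times 2}_{\mathrm{sym}}\subset\polysym$. The orthogonality relation then yields
\begin{equation*}
\int_E \bm{\sigma}(\bm{m}_\alpha):\bm{\varepsilon}(\bm{u})\,d\bm{x}
= \int_E \bm{\sigma}(\bm{m}_\alpha):\projLtwo{\bm{u}}\,d\bm{x} = 0,
\qquad \alpha=4,5,6,
\end{equation*}
where the second equality uses the hypothesis $\projLtwo{\bm{u}}=\bm{0}$.

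Next I would invoke the defining property of the energy projection~\eqref{energy_eq_1} in the form
\begin{equation*}
\int_E \bm{\sigma}(\bm{m}_\alpha):\bm{\varepsilon}(\proj{\bm{u}})\,d\bm{x}
= \int_E \bm{\sigma}(\bm{m}_\alpha):\bm{\varepsilon}(\bm{u})\,d\bm{x} = 0,
\qquad \alpha=4,5,6.
\end{equation*}
Because $\proj{\bm{u}}\in[\mathbb{P}_1(E)]^2$, its strain $\bm{\varepsilon}(\proj{\bm{u}})$ is a constant symmetric matrix, so the integrands above are constants and the three identities reduce to
\begin{equation*}
|E|\,\bm{\sigma}(\bm{m}_\alpha):\bm{\varepsilon}(\proj{\bm{u}})=0,\qquad \alpha=4,5,6.
\end{equation*}

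The final step is to show that $\{\bm{\sigma}(\bm{m}_\alpha)\}_{\alpha=4,5,6}$ spans the three-dimensional space of constant symmetric $2\times 2$ tensors. A direct computation gives that $\{\bm{\varepsilon}(\bm{m}_4),\bm{\varepsilon}(\bm{m}_5),\bm{\varepsilon}(\bm{m}_6)\}$ is a basis of that space (the three modes $(\eta,\xi)$, $(\xi,0)$, $(0,\eta)$ produce the off-diagonal, $(1,1)$ and $(2,2)$ constant strains respectively), and since $\mathbb{C}$ is positive definite on symmetric tensors and hence invertible, $\{\bm{\sigma}(\bm{m}_\alpha)\}_{\alpha=4,5,6}$ is also a basis. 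The vanishing of the three inner products therefore implies $\bm{\varepsilon}(\proj{\bm{u}})=\bm{0}$. I do not expect a real obstacle here: the delicate part is merely the bookkeeping in step one of confirming that $\bm{\sigma}(\bm{m}_\alpha)$ genuinely qualifies as a test tensor in $\polysym$ (which is why the assumption $\ell\geq 1$ is harmless — any $\ell\geq 0$ would suffice for this particular lemma). The stronger restriction on $\ell$ tied to $N_E$ is not used at this stage; it will only enter the subsequent step that upgrades $\bm{\varepsilon}(\proj{\bm{u}})=\bm{0}$ to $\bm{\varepsilon}(\bm{u})=\bm{0}$.
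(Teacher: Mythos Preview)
Your proof is correct and follows essentially the same route as the paper: test the $L^2$ projection identity with the constant symmetric tensors $\bm{\sigma}(\bm{p})$ for $\bm{p}\in[\mathbb{P}_1(E)]^2$, transfer the resulting orthogonality to $\bm{\varepsilon}(\proj{\bm{u}})$ via the definition of the energy projection, and conclude. Your version is slightly more explicit in the closing step (spelling out that $\{\bm{\sigma}(\bm{m}_\alpha)\}_{\alpha=4,5,6}$ spans the constant symmetric tensors because $\mathbb{C}$ is invertible), and your remark that $\ell\geq 0$ already suffices here is correct.
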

\begin{proof}
Assume that $\projLtwo{\bm{u}}=\bm{0}$, then by definition of the $L^2$ projection, we have 
\begin{align*}
    (\bm{\varepsilon}(\bm{u}) , \bm{\varepsilon}^p )_E = 0 \quad \forall \bm{\varepsilon}^p \in \polysym.
\end{align*}
In particular, if we let $\bm{p} \in [\mathbb{P}_{1}(E)]^2$ , then $\bm{\sigma}(\bm{p}) \in {\mathbb{P}}_{0}(E)^{2\times 2}_{\text{sym}} \subseteq \polysym$. So we have 
\begin{align*}
    (\bm{\varepsilon}(\bm{u}),\bm{\sigma}(\bm{p}))_E = 0 \quad \forall \bm{p} \in [\mathbb{P}_1(E)]^2.
\end{align*}
Applying the definition of the energy projection $\proj{\bm{u}}$, we get 
\begin{align*}
    (\bm{\varepsilon}(\proj{\bm{u}}),\bm{\sigma}(\bm{p}))_E=0.
\end{align*}
Since this is true for any $\bm{p} \in [\mathbb{P}_{1}(E)]^2$, 
this results in
\begin{align*}
    \bm{\varepsilon}(\proj{\bm{u}}) = \bm{0}.
    \tag*{\qedhere}
\end{align*}
\end{proof}

In order to show that the defined operator is a norm we use an inf-sup type argument. 
To establish the results, we construct some additional spaces and operators.
To motivate the constructions, we assume that the condition $\projLtwo{\bm{u}}=\bm{0}$ holds. This implies that the following equality holds:
\begin{align*}
    \int_{E}{\projLtwo{\bm{u}}: \bm{\varepsilon}^p\,d\bm{x}} = 0 \quad \forall \bm{\varepsilon}^p \in \polysym.
\end{align*}
Applying the definition of the $L^2$ projection in~\eqref{defL2proj}, we also obtain
\begin{align*}
    \int_{E}{\bm{\varepsilon}(\bm{u}) : \bm{\varepsilon}^p\,d\bm{x}} = 0.
\end{align*}
Using the divergence theorem, we can rewrite this equality as 
\begin{align*}
    \int_{E}{\bm{\varepsilon}(\bm{u}):\bm{\varepsilon}^p\,d\bm{x}} = \int_{\partial E}{\bm{u} \cdot \left(\bm{\varepsilon}^p \cdot \bm{n}\right)\,ds} - \int_{E}{\bm{u} \cdot \left(\diverge{\bm{\varepsilon}^p}\right)\,d\bm{x}}=0.
\end{align*}
We note that $\diverge{\bm{\varepsilon}^p} \in [\mathbb{P}_{l-1}]^2 \subseteq [\mathbb{P}_{l+1}]^2$, and using the definition of the space $\VEM$ , 
Lemma~\ref{lemma1} 
and applying the divergence theorem, the second term becomes
\begin{align*}
    \int_{E}{\bm{u} \cdot \left(\diverge{\bm{\varepsilon}^p}\right)\,d\bm{x}} &= \int_{E}{\proj{\bm{u}} \cdot\ \left(\diverge{\bm{\varepsilon}^p}\right)\,d\bm{x}}
    = \int_{\partial E}{\proj{\bm{u}} \cdot \left(\bm{\varepsilon}^p \cdot \bm{n}\right)\,ds} .
\end{align*}
This gives us the equality
\begin{align}
    0 = \int_{E}{\bm{\varepsilon}(\bm{u}):\bm{\varepsilon}^p\,d\bm{x}} &= \int_{\partial E}{\bm{u} \cdot \left(\bm{\varepsilon^p \cdot \bm{n}}\right)\,ds} - \int_{\partial E}{\proj{\bm{u}}\cdot \left(\bm{\varepsilon}^p \cdot 
    \bm{n}\right)\,ds} \nonumber \\ 
    &= \int_{\partial E}{\left(\bm{u}-\proj{\bm{u}}\right)\rvert_{\partial E}\cdot \left(\bm{\varepsilon}^p \cdot \bm{n}\right)\,ds},\label{bilinearequalzero}
\end{align}
 where we use the notation $\left(\bm{u}-\proj{\bm{u}}\right) \rvert_{\partial E}$ to explicitly indicate that the function is evaluated on the boundary. 
This suggests that we study the operator of the form 
$\int_{\partial E}{\bm{v}\cdot (\bm{Q}\cdot \bm{n})\,ds}$. 

\begin{definition}
Define the bilinear operator $b : R_Q(E) \times [\bm{V}]^2 \to \mathbb{R}$ 
by~\cite{berrone2021lowest} 
\begin{align}\label{adjustedbilinear}
    b(\bm{v},\bm{Q}) = \int_{\partial E}{\bm{v}\cdot (\bm{Q}\cdot \bm{n})\,ds},
\end{align}
where $\bm{v}$ is defined over the boundary $\partial E$. The spaces $R_Q(E)$ and $[\bm{V}]^2$ are chosen later. 
\end{definition}

\acmajor{In particular, we study the special case when
$\bm{v} = \left(\bm{u}-\proj{\bm{u}}\right) \rvert_{\partial E}$. Since we are interested in all such functions $\bm{u} \in \VEM$, we study the space of all linear combination of the basis functions $\left(\bm{\varphi}_i-\proj{\bm{\varphi}_i}\right) \rvert_{\partial E} $. This motivates the next definition: 
\begin{definition}
Define the space $Q(\partial E)$ by
\begin{align}
    Q(\partial E) := \text{span}{\{\left(\bm{\varphi}_i-\proj{\bm{\varphi}_i}\right) \rvert_{\partial E} : i=1,2\dots , 2N_E\}} .
\end{align}
\end{definition}}
Now given a function on $Q(\partial E)$, we need to extend it to a function defined on the entire element $E$. One way to achieve this is to first triangulate the polygon $E$. Let $\tau \subseteq E$ be any triangular subelement. 
Denote $\tau_i$ as the triangle with vertices $\bm{x}_i,\bm{x}_{i+1},\bm{x_c}$, for each $i=1,2,\dots, N_E$, where $\bm{x}_c$ is the centroid of $E$.
We denote the edge connecting the vertices $\bm{x}_i$ and $\bm{x}_c$ by $e_i$, and the 
unit outward normal as $\bm{n}^{e_i}$.    
 With this triangulation, we extend $\bm{v}$ to be a function $\doubleline{\bm{v}}$ on $E$ by requiring that $\doubleline{\bm{v}}$ agrees with $\bm{v} \rvert_{e}$ over each edge $e$ and $\doubleline{\bm{v}} \rvert_{\tau} \in [\mathbb{P}_{1}(\tau)]^2$ over every triangular element $\tau$. To obtain a unique vector-valued function, we require that $\doubleline{\bm{v}} (\bm{x}_c) =\bm{0}$. We use this to define the space $R_Q(E)$ of extended functions over the entire element $E$. 

\begin{definition}
Define the space $R_{Q}(E)$ by 
\begin{align}
    R_{Q}(E) := \{ \doubleline{\bm{v}} : \doubleline{\bm{v}} \rvert_{\tau} \in [\mathbb{P}_1(\tau)]^2 \ \ \forall \tau \subseteq E, \ \doubleline{\bm{v}}\rvert_{\partial E} \in Q(\partial E), \ \doubleline{\bm{v}}(\bm{x_c})=\bm{0} \}. 
\end{align}
\end{definition}

Using~\eqref{adjustedbilinear}, we express~\eqref{bilinearequalzero} as
\begin{align}\label{bilinearequalzero2}
    b(\bm{u}-\proj{\bm{u}},\bm{\varepsilon}^p)=0.
\end{align}
But the extended function $\doubleline{\bm{u}-\proj{\bm{u}}}$ is equal to ${\bm{u}-\proj{\bm{u}}}$ over the boundary, so applying the expression to the extended function, we get 
\begin{align*}
    b(\doubleline{\bm{u}-\proj{\bm{u}}},\bm{\varepsilon}^p)=0 \quad \forall \bm{\varepsilon}^p \in \polysym.
\end{align*}
To show that $\bm{\varepsilon}(\bm{u})=\bm{0}$, it is sufficient to establish that $\bm{u} =\proj{\bm{u}}$ is a rigid-body mode. This is equivalent to showing that
\begin{equation*}
    \|\bm{u}-\proj{\bm{u}}\| =0 
\end{equation*}
in some norm. From~\cite{berrone2021lowest}, it is sufficient to show an inf-sup condition:
\begin{align}\label{test_inf_sup}
    \sup_{\acrev{\bm{\varepsilon}^p \in \polysym}}{\frac{b(\bm{u}-\proj{\bm{u}},\bm{\varepsilon}^p)}{\|\bm{\varepsilon}^p\|}} \geq \beta \|\bm{u}-\proj{\bm{u}}\| .
\end{align}
To formalize this, we first construct a suitable space with a suitable norm. 

\begin{definition}
For every element $E$, let $\bm{H}^1_{\tau}(E)$ be the broken Sobolev space that is defined by 

\begin{align}
    \bm{H}^1_{\tau}(E) := \bigcup_{\tau}{\bm{H}^{1}(\tau)},
\end{align}
where $\bm{H}^1(\tau) = [H^1(\tau)]^2$ is the standard Sobolev space defined on a triangular subelement. 
On this space, equip the seminorm and norm:
\begin{subequations}
\begin{align}
|\bm{u}|^2_{\bm{H}^1_{\tau}(E)} &:= \sum_{\tau}{\|\grad{\bm{u}}\|^2_{\bm{L}^2(\tau)}} + \sum_{i=1}^{N_E}{\|[\![ \bm{u}]\!]_{e_i}\|^2_{\bm{L}^2(e_i)}}, \\
\|\bm{u}\|^2_{\bm{H}^1_{\tau}(E)} &:=|\bm{u}|^2_{\bm{H}^1_{\tau}(E)} + \sum_{\tau}{\|\bm{u}\|^2_{\bm{L}^2(\tau)}}.
\end{align}
\end{subequations}
Again, let $\gamma^{e_i}(.)$ be the trace  
of its argument on edge $e_i$. We then define  $[\![.]\!]_{e_i} : \bm{H}^1_{\tau} \to \bm{L}^2(e_i) $ as the jump across the $i$-th edge of the triangulation, which is 
given by
\begin{align*}
    [\![\bm{u}]\!]_{e_i} := \gamma^{e_i}(\bm{u}\rvert_{\tau_i}) - \gamma^{e_i}(\bm{u}\rvert_{\tau_{i-1}}).
\end{align*}
\end{definition}
We now define a space of functions with finite jumps across edges in the triangulation.
\begin{definition}
Define the space $\bm{V} = \bm{V}(E) \subseteq \acrev{\bigcup_{\tau}{\bm{H}(\textrm{div},\tau)}}$ by
\begin{align}
    \bm{V}(E) := \acrev{\left\{ \bm{v} \in \bigcup_{\tau}{\bm{H}(\textrm{div},\tau)} :  \| [\![\bm{v} ]\!]_{e_i} \|_{\bm{L}^{\infty}(e_i)} < \infty \ \ \forall e_i  \right\}},
\end{align}
\acrev{where ${\bm{H}(\textrm{div},\tau)} $ is the space of functions that have finite divergence in the $L^2$ norm over a triangular subelement.} 
On this space, define the seminorm and norm as
\begin{subequations}
\begin{align}
    |\bm{v}|^2_{\bm{V}} &:= \sum_{\tau}{\|\acrev{\diverge{\bm{v}}}\|^2_{\bm{L}^2(\tau)}} + \acrev{h_E^2}\|[\![\bm{v} ]\!]_{I_{E}}\|^2_{\bm{L}^{\infty}(I_E)},\\ 
    \|\bm{v}\|^2_{\bm{V}} &:= |\bm{v}|^2_{\bm{V}} + \sum_{\tau}{\|\bm{v}\|^2_{\bm{L}^2(\tau)}}, \\
\intertext{where}
    \|[\![\bm{v} ]\!]_{I_{E}}\|^2_{\bm{L}^{\infty}(I_E)} &= \max_{i}{\|[\![\bm{v} ]\!]_{e_i}\|^2_{\bm{L}^{\infty}(e_i)}} 
\end{align}
\end{subequations}
is the maximum of the jumps over all edges in the triangulation. 
\end{definition}

Now we show that the bilinear operator defined in~\eqref{adjustedbilinear} 
is continuous on the newly defined spaces $R_Q(E) \times [\bm{V}]^2$. 
\begin{lemma}
\acrev{Let $b$ be the bilinear form defined in~\eqref{adjustedbilinear}, then there exists a constant $C>0$, such that} 
\begin{align}
    |b(\bm{v},\bm{Q})| \leq C \|\bm{v}\|_{\bm{H}^1_{\tau}(E)} \| \bm{Q}\|_{\bm{[V]}^2} \  \forall \bm{v} \in R_{Q}(E) \ \textrm{and} \ \forall \bm{Q} \in [\bm{V}]^2.
\end{align}
\end{lemma}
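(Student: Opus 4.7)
The plan is to push the boundary integral in $b(\bm{v},\bm{Q})$ into triangle-local volume integrals plus jump contributions on the internal skeleton $I_E$, by applying the divergence theorem on each subtriangle $\tau\subseteq E$, and then estimate each piece against $\|\bm{v}\|_{\bm{H}^1_\tau(E)}$ and $\|\bm{Q}\|_{[\bm{V}]^2}$ via Cauchy--Schwarz, H\"older's inequality, and a standard scaled trace inequality. Using the identity $\nabla\cdot(\bm{Q}^T\bm{v}) = (\nabla\cdot\bm{Q})\cdot\bm{v} + \bm{Q}:\nabla\bm{v}$ on each $\tau$, summing over all $\tau$, and noting that any $\bm{v}\in R_Q(E)$ is continuous across every internal edge $e_i\subseteq I_E$ (because the two linear restrictions from the triangles sharing $e_i$ agree at $\bm{x}_c$, both vanishing there by definition of $R_Q(E)$, and at the polygon vertex $\bm{x}_i$, so they coincide along the entire segment), only a jump of $\bm{Q}$ survives on the internal skeleton, and we obtain
\begin{equation*}
b(\bm{v},\bm{Q}) = \sum_\tau\int_\tau\bm{v}\cdot(\nabla\cdot\bm{Q})\,d\bm{x} + \sum_\tau\int_\tau\bm{Q}:\nabla\bm{v}\,d\bm{x} - \sum_{e_i\subseteq I_E}\int_{e_i}\bm{v}\cdot([\![\bm{Q}]\!]_{e_i}\cdot\bm{n}^{e_i})\,ds.
\end{equation*}

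For the first two terms, Cauchy--Schwarz applied triangle-by-triangle gives bounds of the form $\bigl(\sum_\tau\|\bm{v}\|_{\bm{L}^2(\tau)}^2\bigr)^{1/2}\bigl(\sum_\tau\|\nabla\cdot\bm{Q}\|_{\bm{L}^2(\tau)}^2\bigr)^{1/2}$ and $\bigl(\sum_\tau\|\bm{Q}\|_{\bm{L}^2(\tau)}^2\bigr)^{1/2}|\bm{v}|_{\bm{H}^1_\tau(E)}$, each of which is dominated by $\|\bm{v}\|_{\bm{H}^1_\tau(E)}\|\bm{Q}\|_{[\bm{V}]^2}$ directly from the definitions of the two norms. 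For the jump term I would apply H\"older on each edge, $|\int_{e_i}\bm{v}\cdot([\![\bm{Q}]\!]_{e_i}\cdot\bm{n}^{e_i})\,ds|\leq \|[\![\bm{Q}]\!]\|_{\bm{L}^\infty(e_i)}|e_i|^{1/2}\|\bm{v}\|_{\bm{L}^2(e_i)}$, followed by the standard scaled trace inequality $\|\bm{v}\|_{\bm{L}^2(e_i)}\leq C(h_\tau^{-1/2}\|\bm{v}\|_{\bm{L}^2(\tau)} + h_\tau^{1/2}\|\nabla\bm{v}\|_{\bm{L}^2(\tau)})$ on the adjacent triangle $\tau$, and then sum over the $N_E$ edges using $\|[\![\bm{Q}]\!]\|_{\bm{L}^\infty(e_i)}\leq\|[\![\bm{Q}]\!]\|_{\bm{L}^\infty(I_E)}$ together with Cauchy--Schwarz across triangle indices to obtain a bound that is majorized by $C\,\|\bm{Q}\|_{[\bm{V}]^2}\|\bm{v}\|_{\bm{H}^1_\tau(E)}$.

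The main subtlety I anticipate is the careful bookkeeping of $h_E$ powers in the jump estimate: the $h_E^2$ weight placed on the $L^\infty$-jump in $|\bm{Q}|_{[\bm{V}]^2}^2$ is calibrated precisely to cancel the $h_E$ coming from $|e_i|\lesssim h_E$ in H\"older and the $h_\tau^{-1}$ coming from the trace inequality, so every power must be tracked with shape-regularity of the auxiliary triangulation in mind in order to ensure that the final constant depends only on the polygonal regularity parameters (in particular an upper bound on $N_E$) and not on the mesh size $h_E$. The other ingredients---the divergence theorem on each $\tau$, the continuity observation for $\bm{v}\in R_Q(E)$ across $I_E$, and the two Cauchy--Schwarz steps---are essentially routine.
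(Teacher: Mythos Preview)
Your decomposition is exactly the paper's: push $b$ onto each subtriangle with the divergence theorem, use continuity of $\bm v\in R_Q(E)$ across the internal edges so that only $[\![\bm Q]\!]_{e_i}$ survives on $I_E$, and bound the two volume terms by Cauchy--Schwarz. That part is fine and matches the paper verbatim.

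The gap is in your treatment of the internal--edge term. With H\"older you get $\|[\![\bm Q]\!]\|_{L^\infty(e_i)}\,|e_i|^{1/2}\,\|\bm v\|_{L^2(e_i)}$, and the scaled trace inequality gives
\[
\|\bm v\|_{L^2(e_i)}\le C\bigl(h_\tau^{-1/2}\|\bm v\|_{L^2(\tau)}+h_\tau^{1/2}\|\nabla\bm v\|_{L^2(\tau)}\bigr).
\]
The gradient piece combines with $|e_i|^{1/2}$ to produce the desired factor $h_E$ in front of $\|[\![\bm Q]\!]\|_{L^\infty(I_E)}$, matching the $h_E$--weight in $\|\bm Q\|_{[\bm V]^2}$. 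But the $L^2$ piece only yields $|e_i|^{1/2}h_\tau^{-1/2}\sim 1$, so after summing you are left with $\|[\![\bm Q]\!]\|_{L^\infty(I_E)}\,\|\bm v\|_{L^2(E)}$, and since $\|\bm Q\|_{[\bm V]^2}$ controls only $h_E\|[\![\bm Q]\!]\|_{L^\infty(I_E)}$, you are short one power of $h_E$ and the constant blows up like $h_E^{-1}$. Your parenthetical accounting (``$h_E$ from $|e_i|$ and $h_\tau^{-1}$ from the trace'') does not match the $h_E^{1/2}$ and $h_\tau^{-1/2}$ that actually appear.

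The fix is to use the one structural fact you invoked only for continuity: $\bm v(\bm x_c)=\bm 0$. The paper exploits this by computing $\|\bm v\|_{L^2(e_i)}$ exactly for the affine function $\bm v|_{e_i}$, obtaining $\|\bm v\|_{L^2(e_i)}=\sqrt{|e_i|/3}\,|\bm v(\bm x_i)|$, and then bounding $\sum_i|e_i|\,|\bm v(\bm x_i)|\le Ch_E\|\bm v\|_{\bm H^1_\tau(E)}$ via a (shape--regular, $h_E$--independent) norm equivalence on the piecewise--linear space. Equivalently, within your framework you can insert a Poincar\'e step on each triangle, $\|\bm v\|_{L^2(\tau)}\le Ch_\tau\|\nabla\bm v\|_{L^2(\tau)}$ (valid since $\bm v|_\tau$ is affine and vanishes at $\bm x_c$), which supplies the missing $h_E$ and closes the estimate. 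Without this, the generic trace inequality alone is not enough.
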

\begin{proof}
By definition, we have 
\begin{align*}
    b(\bm{v},\bm{Q}) = \int_{\partial E}{\bm{v}\cdot (\bm{Q}\cdot \bm{n})\,ds}.
\end{align*}
We partition each element $E$ into a union of triangles $\{\tau_i\}$, and again letting $\{e_i\}$ denote the edge connecting the $i$-th vertex to the center, we rewrite the integral as  
\begin{align*}
    b(\bm{v},\bm{Q}) = \sum_{i}&\left[\int_{\partial \tau_i}{\bm{v}\cdot (\bm{Q}\cdot \bm{n})\,ds}-\int_{e_i}{ \gamma^{e_i}(\bm{v}\rvert_{\tau_i})\cdot (\bm{Q}^{e_i}_{\tau_i}\cdot \bm{n}_{e_i}^{\tau_i})\,ds}\right. \\ & \ \ - \left. \int_{e_{i}}{ \gamma^{e_i}(\bm{v}\rvert_{\tau_{i-1}})\cdot (\bm{Q}^{e_i}_{\tau_{i-1}}\cdot \bm{n}_{e_i}^{\tau_{i-1}})\,ds}\right].
\end{align*}
We first note that by assumption $\bm{v}\in R_Q(E)$, which implies that $\bm{v}$ along the $i$-th edge is the same from either triangle.
So we now have
\begin{align*}
    \ \gamma^{e_i}(\bm{v}\rvert_{\tau_i}) = \gamma^{e_i}(\bm{v}\rvert_{\tau_{i-1}}).
\end{align*}
In addition, since 
$\bm{n}_{e_i}^{\tau_i} = -\bm{n}_{e_i}^{\tau_{i-1}}$, we 
can rewrite $b(\bm{v},\bm{Q})$ as 
\begin{align}
    b(\bm{v},\bm{Q}) &=  \sum_{i}\int_{\partial \tau_i}{\bm{v}\cdot (\bm{Q}\cdot \bm{n})\,ds}-\int_{e_i}{ \gamma^{e_i}(\bm{v}\rvert_{\tau_i})\cdot (\bm{Q}^{e_i}_{\tau_i}-\bm{Q}^{e_i}_{\tau_{i-1}})\cdot \bm{n}_{e_i}^{\tau_i}\,ds} \nonumber \\
    &=\sum_{i}{\int_{\partial \tau_i}{\bm{v}\cdot (\bm{Q}\cdot \bm{n})\,ds}} - \int_{e_i}{\gamma^{e_i}(\bm{v}\rvert_{\tau_i})\cdot ([\![\bm{Q}]\!]_{e_i} \cdot \bm{n}_{e_i}^{\tau_i})\,ds}.\label{bsimplify}
\end{align}
For the first term in~\eqref{bsimplify}, we apply the 
divergence theorem to obtain
\begin{align*}
    \int_{\partial \tau_i}{\bm{v} \cdot (\bm{Q} \cdot \bm{n})\, ds} = \int_{\tau_i}{\nabla\cdot(\bm{v}\cdot \bm{Q}) \,d\bm{x}} = \int_{\tau_i} [ 
    {\grad{\bm{v}}:\bm{Q} + \bm{v}\cdot (\diverge{\bm{Q}}) ] \,d\bm{x}}.
\end{align*}
We now have 
\begin{align*}
    b(\bm{v},\bm{Q}) = \sum_{i}{\int_{\tau_i}{[ \grad{\bm{v}}:\bm{Q} + \bm{v}\cdot (\diverge{\bm{Q}}) ]\,d\bm{x}}-\int_{e_i}{\gamma^{e_i}(\bm{v}\rvert_{\tau_i})\cdot ([\![\bm{Q}]\!]_{e_i} \cdot \bm{n}_{e_i}^{\tau_i})\,ds}} ,
\end{align*}
and can bound $|b(\bm{v},\bm{Q}|$ in~\eqref{bsimplify} as 
\begin{align}
    |b(\bm{v},\bm{Q})| \leq \underbrace{|\sum_{i}{\int_{\tau_i}{[ \grad{\bm{v}}:\bm{Q} + \bm{v}\cdot (\diverge{\bm{Q}}) ]\,d\bm{x}}}}_{A} | + \underbrace{|\sum_{i}{\int_{e_i}{\gamma^{e_i}(\bm{v}\rvert_{\tau_i})\cdot ([\![\bm{Q}]\!]_{e_i} \cdot \bm{n}_{e_i}^{\tau_i})\,ds}} |}_{B} . \label{b_estimate}
\end{align}
We estimate each term in~\eqref{b_estimate} separately. For term $A$ in~\eqref{b_estimate}, we have 
\begin{align}
    &|\sum_{i}{\int_{\tau_i}{[ \grad{\bm{v}}:\bm{Q} + \bm{v}\cdot (\diverge{\bm{Q}}) ] \,d\bm{x}}}| \leq \sum_{i}\Bigr[ \|\grad{\bm{v}}\|_{\bm{L}^2(\tau_i)}\|\bm{Q}\|_{\bm{L}^2(\tau_i)} + \|\bm{v}\|_{\bm{L}^2(\tau_i)}\|\diverge{\bm{Q}}\|_{\bm{L}^2(\tau_i)} \Bigr] \notag \\
    &\leq \sum_{i}{\Bigr[ \|\grad{\bm{v}}\|_{\bm{L}^2(\tau_i)}\Bigl(\|\bm{Q}\|_{\bm{L}^2(\tau_i)}+\|\acrev{\diverge{\bm{Q}}}\|_{\bm{L}^2(\tau_i)}\Bigr)} + \|\bm{v}\|_{\bm{L}^2(\tau_i)}\Bigl(\|\bm{Q}\|_{\bm{L}^2(\tau_i)}+\|\acrev{\diverge{\bm{Q}}}\|_{\bm{L}^2(\tau_i)}\Bigr) \Bigr] \notag \\
    &\leq C\sum_{i}{\Bigl(\|\bm{v}\|_{\bm{L}^2(\tau_i)}+\|\grad{\bm{v}}\|_{\bm{L}^2(\tau_i)}\Bigr)\Bigl(\|\bm{Q}\|_{\bm{L}^2(\tau_i)}+\|\acrev{\diverge{\bm{Q}}}\|_{\bm{L}^2(\tau_i)}\Bigr)} \notag \\
    &\leq C\|\bm{v}\|_{\bm{H}^1_{\tau}}\sum_{i}{\Bigl(\|\bm{Q}\|_{\bm{L}^2(\tau_i)}+\|\acrev{\diverge{\bm{Q}}}\|_{\bm{L}^2(\tau_i)}\Bigr)} \label{estimate_div}.
\end{align}
Now for term $B$ in~\eqref{b_estimate}, we estimate
\begin{align*}
\sum_{i}{|\int_{e_i}{\gamma^{e_i}(\bm{v}\rvert_{\tau_i})\cdot ([\![\bm{Q}]\!]_{e_i} \cdot \bm{n}_{e_i}^{\tau_i})\,ds} |} \leq \sum_{i}{\|\gamma^{e_i}(\bm{v}\rvert_{\tau_i})\|_{\bm{L}^2(e_i)} \|[\![\bm{Q}]\!]_{e_i}\|_{\bm{L}^2(e_i)}}.
\end{align*}
Since $\bm{v} \in R_Q(E)$, it is linear on each of the edges $e_i$. It can be shown using a three-point Gauss-Lobatto quadrature scheme and equivalent norms, that 
\begin{align*}
    \|\gamma^{e_i}(\bm{v}\rvert_{\tau_i})\|_{\bm{L}^2(e_i)} =\sqrt{\frac{|e_i|}{3}}|\bm{v}(\bm{x}_i)|.
\end{align*}
We can also estimate that 
\begin{align*}
    \|[\![\bm{Q}]\!]_{e_i}\|_{\bm{L}^2(e_i)}&\leq \sqrt{|e_i|}  \|[\![\bm{Q}]\!]_{e_i}\|_{\bm{L}^{\infty}(e_i)}\\
    &\leq \sqrt{|e_i|} \|[\![\bm{Q} ]\!]_{I_{E}}\|_{\bm{L}^{\infty}(I_E)}.
\end{align*}
Combining the two terms and using equivalent norms, we get 
\begin{align}
\sum_{i}{|\int_{e_i}{\gamma^{e_i}(\bm{v}\rvert_{\tau_i})\cdot ([\![\bm{Q}]\!]_{e_i} \cdot \bm{n}_{e_i}^{\tau_i})\,ds} |} &\leq \sum_{i}{{\frac{|e_i|}{\sqrt{3}}}|\bm{v}(\bm{x}_i)| \|[\![\bm{Q} ]\!]_{I_{E}}\|_{\bm{L}^{\infty}(I_E)}} \notag \\
&\leq Ch_E\|[\![\bm{Q} ]\!]_{I_{E}}\|_{\bm{L}^{\infty}(I_E)} \|\bm{v}\|_{\bm{H}^1_{\tau}(E)}.\label{estimate_jump} 
\end{align}
Combining these two terms in~\eqref{estimate_div} and~\eqref{estimate_jump}, we find that
\begin{align*}
    |b(\bm{v},\bm{Q})|&\leq C_1\|\bm{v}\|_{\bm{H}^1_{\tau}}\sum_{i}{(\|\bm{Q}\|_{\bm{L}^2(\tau_i)}+\|\acrev{\diverge{\bm{Q}}}\|_{\bm{L}^2(\tau_i)})}  +C_2h_E\|\bm{v}\|_{\bm{H}^1_{\tau}(E)}\|[\![\bm{Q} ]\!]_{I_{E}}\|_{\bm{L}^{\infty}(I_E)} \\
    &\leq C\|\bm{v}\|_{\bm{H}^1_{\tau}(E)}\|\bm{Q}\|_{[\bm{V}]^2}.
    \tag*{\qedhere}
\end{align*}
\end{proof}

Using this bilinear form $b$ and the specific norms, we
formalize the inf-sup condition that is stated in~\eqref{test_inf_sup}. 
\begin{proposition}
Let $\bm{u} \in \VEM$ and $b$ as defined in~\eqref{adjustedbilinear}. If there exists a constant $\beta>0$, independent of $h_E$, such that
\begin{align}\label{inf-sup}
    \forall \bm{v} \in R_Q(E), \quad \sup_{\bm{Q} \in \polysym}\frac{b(\bm{v},\bm{Q})}{\|\bm{Q}\|_{[\bm{V}]^2}} \geq \beta \|\bm{v}\|_{\bm{H}^1_{\tau}(E)},
\end{align}
then 
\begin{align*}
    \projLtwo{\bm{u}}= \bm{0} \implies \bm{\varepsilon}(\bm{u}) =\bm{0}.
\end{align*}
\end{proposition}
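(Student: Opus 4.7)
The derivation preceding the proposition has already established that, assuming $\projLtwo{\bm{u}} = \bm{0}$, one has $b(\bm{u} - \proj{\bm{u}}, \bm{\varepsilon}^p) = 0$ for every $\bm{\varepsilon}^p \in \polysym$. Since $b$ is a boundary integral and depends only on the boundary trace of its first argument, this vanishing extends to the piecewise-linear triangulated extension: $b(\doubleline{\bm{u} - \proj{\bm{u}}}, \bm{\varepsilon}^p) = 0$ for all $\bm{\varepsilon}^p \in \polysym$. The plan is to combine this with the assumed inf-sup estimate and then upgrade boundary information into full interior vanishing by exploiting the defining properties of $\VEM$.

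Feeding $\bm{v} = \doubleline{\bm{u} - \proj{\bm{u}}} \in R_Q(E)$ into \eqref{inf-sup} immediately forces $\|\doubleline{\bm{u} - \proj{\bm{u}}}\|_{\bm{H}^1_\tau(E)} = 0$, since the supremum on the right-hand side is zero. Because this norm dominates the $L^2$ mass on every triangular subelement of $E$, the extension vanishes identically on $E$. In particular, $(\bm{u} - \proj{\bm{u}})\rvert_{\partial E} = \bm{0}$, so $\bm{u}$ and $\proj{\bm{u}}$ share the same boundary trace.

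Setting $\bm{w} := \bm{u} - \proj{\bm{u}}$, I would then argue $\bm{w} \equiv \bm{0}$ throughout $E$. First, since $\proj{\bm{u}}$ is a degree-one polynomial with zero Laplacian and itself belongs to $\VEM$ (the energy projection is the identity on $[\mathbb{P}_1(E)]^2$), one checks that $\bm{w} \in \VEM$ with $\Delta \bm{w} \in [\mathbb{P}_{\ell+1}(E)]^2$ and $\proj{\bm{w}} = \bm{0}$. The enhancement condition~\eqref{enrichment_condition} applied to $\bm{w}$ then gives $\int_E \bm{w} \cdot \bm{p}\, d\bm{x} = 0$ for every $\bm{p} \in [\mathbb{P}_{\ell+1}(E)]^2$. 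Choosing $\bm{p} = \Delta \bm{w}$ and integrating by parts (the boundary term vanishes by $\bm{w}\rvert_{\partial E} = \bm{0}$) yields $\|\nabla \bm{w}\|^2_{\bm{L}^2(E)} = 0$. Hence $\bm{w}$ is constant, and the zero boundary trace forces $\bm{w} \equiv \bm{0}$. Thus $\bm{u} = \proj{\bm{u}}$ on $E$, and Lemma~\ref{lemma1} delivers $\bm{\varepsilon}(\bm{u}) = \bm{\varepsilon}(\proj{\bm{u}}) = \bm{0}$.

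The main obstacle is the last step, where vanishing of the boundary trace must be promoted to vanishing on the entire interior. The inf-sup estimate controls only the piecewise-linear triangulated extension, so directly it delivers no interior information about $\bm{u}$ itself. Bridging the gap essentially requires the combined use of the Laplacian constraint $\Delta \bm{u} \in [\mathbb{P}_{\ell+1}(E)]^2$ and the enhancement moment condition defining $\largeset$ --- weakening either would break the integration-by-parts argument and leave $\bm{w}$ potentially non-zero in the interior.
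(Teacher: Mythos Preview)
Your proof is correct and follows the same structure as the paper's: use the inf--sup condition to kill the triangulated extension, deduce that $\bm{u}$ and $\proj{\bm{u}}$ agree on $\partial E$, promote this to agreement on all of $E$, and then invoke Lemma~\ref{lemma1}. The only difference is in the boundary-to-interior step: the paper dispatches it in one line by appealing to unisolvency of the degrees of freedom in $\VEM$ (two functions in $\VEM$ with the same nodal values coincide), whereas you supply an explicit argument via the Laplacian constraint and the moment condition defining $\largeset$ --- this is more self-contained but is precisely a proof of that unisolvency fact.
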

\begin{proof}
Assume that $\projLtwo{\bm{u}}=0$, then by~\eqref{bilinearequalzero2}, we have
\begin{align*}
b(\doubleline{\bm{u}-\proj{\bm{u}}},\bm{\varepsilon}^p) = 0.    
\end{align*}
Then by assumption 
\begin{align*}
    \beta\|\doubleline{\bm{u}-\proj{\bm{u}}}\|_{\bm{H}^1_{\tau}(E)} = 0,
\end{align*}
which implies that 
\begin{align*}
    \doubleline{\bm{u}-\proj{\bm{u}}} = \bm{0}.
\end{align*}
Then we also have on the boundary, 
\begin{align*}
\bm{u}-\proj{\bm{u}}\rvert_{\partial E} = \bm{0}.
\end{align*}
But for $\bm{u} \in \VEM$, this implies that $\bm{u}=\proj{\bm{u}}$. Then by Lemma~\eqref{lemma1}, we get $\bm{\varepsilon}(\bm{u}) = \bm{0}$.
\end{proof}

In order for the previous proposition to hold for any constant, we include a stronger result as proven in~\cite{berrone2021lowest} for scalar equations. The proof of these results \acrev{relies} on the construction of a Fortin operator $\bm{\Pi}_E$, as shown for general cases in~\cite{mixed:fem}.
\begin{proposition}\label{prop2}
Assume there exists an operator $\bm{\Pi}_E :[\bm{V}]^2 \to [\mathbb{P}_l(E)]^{2 \times 2}$ satisfying~\cite{mixed:fem}
\begin{align}
    b(\bm{v},\bm{\Pi}_E\bm{Q}-\bm{Q})=0 \quad \forall \bm{v} \in R_Q(E)
\end{align}
and assume there is some constant $C_{\Pi}>0$, independent of $h_E$, such that
\begin{align}
\|\bm{\Pi}_E\bm{Q} \|_{\bm{[V]}^2} \leq C_{\Pi}\|\bm{Q}\|_{\bm[V]^2} \ \ \forall \bm{Q} \in [\bm{V}]^2.  
\end{align}
Assume further that there exists a $\eta >0$, independent of $h_E$, such that 
\begin{align}\label{infsup}
    \inf_{\bm{v}\in R_Q(E)} \sup_{\bm{Q}\in [\bm{V}]^2}\frac{b(\bm{v},\bm{Q})}{\|\bm{v}\|_{\bm{H}^1_{\tau}(E)} \|\bm{Q}\|_{\bm{[V]}^2}} \geq \eta.
\end{align}
Then the discrete inf-sup condition given in~\eqref{inf-sup} is satisfied. 
\end{proposition}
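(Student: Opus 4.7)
The plan is to apply the classical Fortin lemma: the continuous inf-sup on the large space $[\bm{V}]^2$, combined with a bounded surjection $\bm{\Pi}_E$ onto the discrete test space that is $b$-compatible, transfers the inf-sup to the discrete test space with only a quantitative loss of a factor $C_{\Pi}$ in the constant. This abstract scheme is exactly what the three hypotheses of the proposition are designed to supply, so the proof should be a short direct manipulation of the two inf-sup quotients.

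Concretely, I would fix an arbitrary $\bm{v} \in R_Q(E)$ and proceed in three short steps. First, invoke the continuous inf-sup~\eqref{infsup} to obtain
\begin{align*}
\eta \,\|\bm{v}\|_{\bm{H}^1_{\tau}(E)} \;\leq\; \sup_{\bm{Q}\in [\bm{V}]^2} \frac{b(\bm{v},\bm{Q})}{\|\bm{Q}\|_{[\bm{V}]^2}}.
\end{align*}
Second, use the Fortin orthogonality $b(\bm{v},\bm{\Pi}_E\bm{Q}-\bm{Q})=0$ to replace $b(\bm{v},\bm{Q})$ in the numerator by $b(\bm{v},\bm{\Pi}_E\bm{Q})$ without changing its value. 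Third, apply the continuity bound $\|\bm{\Pi}_E\bm{Q}\|_{[\bm{V}]^2} \leq C_{\Pi} \|\bm{Q}\|_{[\bm{V}]^2}$ to bound the denominator from below by $C_{\Pi}^{-1}\|\bm{\Pi}_E\bm{Q}\|_{[\bm{V}]^2}$, after which the supremum is effectively restricted to the image of $\bm{\Pi}_E$ in the discrete polynomial space. Combining the three steps yields
\begin{align*}
\eta \,\|\bm{v}\|_{\bm{H}^1_{\tau}(E)} \;\leq\; C_{\Pi} \sup_{\bm{P}\in \polysym} \frac{b(\bm{v},\bm{P})}{\|\bm{P}\|_{[\bm{V}]^2}},
\end{align*}
which is the desired discrete inf-sup~\eqref{inf-sup} with explicit constant $\beta = \eta/C_{\Pi}$, independent of $h_E$.

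I do not expect a genuine obstacle in this proposition; the argument is the standard abstract Fortin trick recalled in~\cite{mixed:fem}, and the real work is pushed into the \emph{construction} of an operator $\bm{\Pi}_E$ that actually satisfies the three hypotheses (this will need to be addressed separately, as it requires choosing $\ell=\ell(E)$ large enough relative to $N_E$, which is precisely the role of condition~\eqref{eq:strong_l_condition}). The one book-keeping point to be careful about is the ambient space of the discrete supremum: the hypothesis declares the image of $\bm{\Pi}_E$ in $[\mathbb{P}_{\ell}(E)]^{2\times 2}$ while the target~\eqref{inf-sup} ranges over the symmetric subspace $\polysym$. This is easily reconciled by either constructing $\bm{\Pi}_E$ so that it preserves symmetry, or restricting its codomain to $\polysym$ after the fact, and in either case the same constant $\beta = \eta/C_{\Pi}$ is obtained.
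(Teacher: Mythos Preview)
Your argument is correct and is precisely the standard Fortin trick; note, however, that the paper does not actually give a proof of this proposition---it simply refers the reader to Propositions~2 and~3 of~\cite{berrone2021lowest} (and to~\cite{mixed:fem} for the abstract framework), stating that the construction generalizes directly to the vectorial case. Your write-up therefore supplies exactly the short manipulation that the paper omits, and with the same constant $\beta=\eta/C_{\Pi}$ one would obtain from the cited references.
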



\begin{proposition}
Let b be defined by~\eqref{adjustedbilinear}, \acrev{then the inf-sup condition given in~\eqref{infsup} holds}. 
\end{proposition}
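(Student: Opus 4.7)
The plan is to prove the inf-sup condition by exhibiting, for each $\bm{v} \in R_Q(E)$, a distinguished test tensor $\bm{Q}^{\ast} = \bm{Q}^{\ast}(\bm{v}) \in [\bm{V}]^2$ that realizes the supremum up to a multiplicative constant independent of $h_E$. This extends the scalar strategy of~\cite{berrone2021lowest} to the tensor-valued setting, exploiting that the identity already derived in~\eqref{bsimplify} decomposes $b(\bm{v},\bm{Q})$ into exactly the kind of terms that appear in $\|\bm{v}\|_{\bm{H}^1_{\tau}(E)}$.

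First, I would reuse~\eqref{bsimplify} to write
\[
b(\bm{v},\bm{Q}) = \sum_i \int_{\tau_i}\bigl[\grad{\bm{v}}:\bm{Q} + \bm{v}\cdot(\diverge{\bm{Q}})\bigr]\,d\bm{x} - \sum_i \int_{e_i}\gamma^{e_i}(\bm{v}|_{\tau_i})\cdot([\![\bm{Q}]\!]_{e_i}\cdot\bm{n}^{\tau_i}_{e_i})\,ds,
\]
which pairs $\grad{\bm{v}}$ against the volume part of $\bm{Q}$ and $[\![\bm{v}]\!]_{e_i}$ against its jump. So the task reduces to building $\bm{Q}^{\ast}$ whose volume piece is aligned with $\grad{\bm{v}}$ triangle by triangle, and whose interior-edge jump is aligned with the interior jump of $\bm{v}$.

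Next, I would construct $\bm{Q}^{\ast}$ piecewise on the subtriangles $\tau_i$. Inside each $\tau_i$ take a symmetric, divergence-free component built from $\grad{\bm{v}}|_{\tau_i}$ (for example a suitable symmetrized rescaling) so that $\int_{\tau_i}\grad{\bm{v}}:\bm{Q}^{\ast}\sim \|\grad{\bm{v}}\|^2_{\bm{L}^2(\tau_i)}$ while $\diverge{\bm{Q}^{\ast}}|_{\tau_i}=\bm{0}$, killing the awkward $\bm{v}\cdot(\diverge{\bm{Q}^{\ast}})$ term. On each interior edge $e_i$ add a discontinuous correction arranged so that $[\![\bm{Q}^{\ast}]\!]_{e_i}\cdot\bm{n}^{\tau_i}_{e_i}$ is parallel to $-\gamma^{e_i}(\bm{v}|_{\tau_i})$ with amplitude comparable to $h_E^{-1}\|[\![\bm{v}]\!]_{e_i}\|_{\bm{L}^{\infty}(e_i)}$, so the edge integral contributes the jump-squared piece of $|\bm{v}|^2_{\bm{H}^1_{\tau}(E)}$ with the correct sign.

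The final step is verification by scaling: using the star-convexity of $E$, the diameters and areas of the $\tau_i$ are comparable to $h_E$, so standard scaling on each $\tau_i$ yields both $\|\bm{Q}^{\ast}\|_{[\bm{V}]^2} \leq C\|\bm{v}\|_{\bm{H}^1_{\tau}(E)}$ and $b(\bm{v},\bm{Q}^{\ast}) \geq c\,\|\bm{v}\|^2_{\bm{H}^1_{\tau}(E)}$, giving~\eqref{infsup} with $\eta = c/C$. The main obstacle is the calibration: the volume and jump pieces of $\bm{Q}^{\ast}$ must not interfere destructively across triangle interfaces, the weight $h_E$ in the definition of $\|\cdot\|_{[\bm{V}]^2}$ has to exactly absorb the $h_E^{-1}$ used in the jump amplitude, and $\bm{Q}^{\ast}$ must remain symmetric so that, through the Fortin operator of Proposition~\ref{prop2}, the continuous inf-sup descends to the discrete one~\eqref{inf-sup} over $\polysym$.
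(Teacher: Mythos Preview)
The paper does not give its own proof of this proposition: immediately after stating it, the authors write ``For the proof of these propositions we refer the reader to Propositions 2 and 3 in~\cite{berrone2021lowest} \ldots\ The construction methods appear to generalize directly to the vectorial case.'' So there is nothing detailed to compare your argument against; the paper simply asserts that the scalar construction of Berrone--Borio--Marcon transfers.

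That said, your sketch has two concrete problems that would need to be fixed before it could stand on its own. First, you say the edge contribution of $\bm{Q}^{\ast}$ should recover ``the jump-squared piece of $|\bm{v}|^2_{\bm{H}^1_{\tau}(E)}$.'' But for $\bm{v}\in R_Q(E)$ the interior jumps $[\![\bm{v}]\!]_{e_i}$ vanish identically (this is exactly what was used in passing to~\eqref{bsimplify}); the missing part of the full norm $\|\bm{v}\|_{\bm{H}^1_{\tau}(E)}$ is the $L^2$ piece $\sum_\tau\|\bm{v}\|_{\bm{L}^2(\tau)}^2$, not any jump term. Your edge construction therefore has the wrong target. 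Second, you propose to fix the bulk of $\bm{Q}^{\ast}$ on each $\tau_i$ (to align with $\nabla\bm{v}|_{\tau_i}$, divergence-free) and then ``add a discontinuous correction'' on the interior edges to tune $[\![\bm{Q}^{\ast}]\!]_{e_i}$. These are not independent degrees of freedom: once $\bm{Q}^{\ast}|_{\tau_i}$ is chosen on every subtriangle, the interior jumps are determined. Any edge correction lives on the triangles themselves and will alter the bulk pairing and the divergence you just set to zero. A workable construction has to choose the piecewise definition of $\bm{Q}^{\ast}$ so that the induced jumps already do what you want, or else abandon $\diverge{\bm{Q}^{\ast}}=\bm{0}$ and use the $\bm{v}\cdot(\diverge{\bm{Q}^{\ast}})$ term to recover $\|\bm{v}\|_{\bm{L}^2}^2$.

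A smaller point: the symmetry requirement you impose on $\bm{Q}^{\ast}$ is unnecessary here. The inf-sup~\eqref{infsup} is taken over all of $[\bm{V}]^2$, not over $\polysym$; symmetry only enters at the discrete level via the Fortin operator, which is the content of Proposition~\ref{prop2}, a separate statement.
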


For the proof of these propositions we refer the reader to Propositions 2 and 3 in~\cite{berrone2021lowest}, and for the explicit construction of the operator $\acrev{\bm{\Pi}_E}$, we also point to Proposition \acrev{4} in~\cite{berrone2021lowest}. The construction methods appear to generalize directly to the vectorial case. 
We now show that the operator given in~\eqref{l_norm} satisfies the positive-definite property and is thus a norm. 
\begin{proposition}
For any $\bm{u} \in \bm{V}_{1,\bm{\ell}}$, with $\ell(E) \in \mathbb{N}$ satisfying~\eqref{l_condition} for all elements E,
\begin{align}
    \|\bm{u}\|_{\bm{\ell}} = 0 \implies \bm{u}=\bm{0}, 
\end{align}
where the norm $\|.\|_{\bm{\ell}}$ is defined in~\eqref{l_norm}.
\end{proposition}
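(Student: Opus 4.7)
The plan is to chain together the structure of $a_h^E$, Theorem~\ref{projeqzero}, and global conformity/boundary data to reduce the implication to nothing more than standard positive-definiteness arguments. I begin by unpacking the definition of $\|\cdot\|_{\bm{\ell}}$: since
\begin{align*}
    \|\bm{u}\|_{\bm{\ell}}^2 = \sum_{E} \int_{E} \left(\overline{\projLtwo{\bm{u}}}\right)^T \bm{C} \, \overline{\projLtwo{\bm{u}}} \, d\bm{x}
\end{align*}
is a sum of nonnegative terms (the material matrix $\bm{C}$ is symmetric positive definite under the standard restrictions on $E_Y$ and $\nu$), the assumption $\|\bm{u}\|_{\bm{\ell}}=0$ forces every element contribution to vanish. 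Pointwise positive-definiteness of $\bm{C}$ then yields $\overline{\projLtwo{\bm{u}}} = \bm{0}$ on each $E$, which, translated back out of Voigt notation, reads $\projLtwo{\bm{u}}=\bm{0}$ for every element.

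Next I invoke Theorem~\ref{projeqzero}, which is the workhorse: since $\ell(E)$ has been chosen on every element $E$ to satisfy~\eqref{l_condition} (equivalently~\eqref{eq:strong_l_condition}), the vanishing of the local strain projection forces $\bm{\varepsilon}(\bm{u})=\bm{0}$ on each $E$. Consequently $\bm{u}\rvert_E$ lies in the space of rigid-body modes for every element, i.e.\ $\bm{u}\rvert_E = \bm{a}_E + \bm{W}_E \bm{x}$ for some constant $\bm{a}_E \in \R^2$ and skew-symmetric $\bm{W}_E$.

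Finally, I promote this element-wise conclusion to a global one using the definition of $\bm{V}_{1,\bm{\ell}}$. By construction, functions in $\bm{V}_{1,\bm{\ell}}$ lie in $[H^1(\Omega)]^2$ and are linear on each edge, so the element-wise rigid-body representations must agree on shared edges; matching the affine pieces at three non-collinear vertex configurations propagates a single rigid-body motion across the connected mesh. Restricting to the homogeneous Dirichlet setting under consideration, this global rigid-body motion vanishes on $\Gamma_D$ (which contains at least two non-collinear points, since it is a non-degenerate portion of $\partial\Omega$), and the only rigid-body mode with that property is $\bm{u}\equiv\bm{0}$.

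The only step with any subtlety is the last one: passing from element-wise rigid-body behavior to a globally vanishing displacement. This hinges on the conformity built into $\bm{V}_{1,\bm{\ell}}$ (continuity across edges plus linearity on each edge) and on $\Gamma_D$ being rich enough to kill rigid-body motions. Both are implicit in the model problem setup, so no new analytic machinery is needed beyond what has already been established; the remaining work is the bookkeeping above.
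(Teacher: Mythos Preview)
Your proof is correct and follows essentially the same route as the paper: vanishing of $\|\bm{u}\|_{\bm{\ell}}$ plus positive-definiteness of $\mathbb{C}$ gives $\projLtwo{\bm{u}}=\bm{0}$ on each element, Theorem~\ref{projeqzero} then yields $\bm{\varepsilon}(\bm{u})=\bm{0}$ elementwise, and the homogeneous Dirichlet data kills the resulting rigid-body mode. The only difference is that you spell out the edge-matching argument for propagating the element-wise rigid-body modes into a single global one, whereas the paper simply notes that $\bm{u}$ is a rigid-body mode (implicitly using that $\bm{u}\in[H^1(\Omega)]^2$ and $\bm{\varepsilon}(\bm{u})=\bm{0}$ a.e.\ on the connected domain $\Omega$).
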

\begin{proof}
Let $\bm{u} \in \bm{V}_{1,\bm{\ell}}$ and assume $\|\bm{u}\|^2_{\bm{\ell}}=0$. This implies that
\begin{align*}
    \sum_{E}{\int_{E}{\projLtwo{\bm{u}}:\mathbb{C}:\projLtwo{\bm{u}}}\,d\bm{x}}=0.
\end{align*}
Assuming $\mathbb{C}$ is a positive-definite material tensor, we must have
\begin{align*}
    \projLtwo{\bm{u}}=\bm{0}.
\end{align*}
Since $\ell$ satisfies~\eqref{l_condition}, we know by Theorem \ref{projeqzero} that $\bm{\varepsilon}(\bm{u})=\bm{0}$ for each $E$. This implies that $\bm{u}$ is a rigid-body mode. But due to homogeneous boundary conditions, no nonzero rigid-body modes are present, and therefore $\bm{u}=\bm{0}$. 
\end{proof}

We also have that under the condition~\eqref{l_condition},
that the norm~\eqref{l_norm} is equivalent to the standard norm in $\bm{H}^1_0$. 
\begin{lemma}\label{lemma3}
For all $\bm{u} \in \bm{V}_{1,\bm{\ell}}$, there exists a $C_1>0$ such that 
\begin{subequations}
\begin{align}
    \|\bm{u}\|_{\bm{\ell}} \leq C_1\|\bm{u}\|_{\bm{H}^1_0(\Omega)},\\
\intertext{and if 
for every element $E$, $\ell(E)$ satisfies~\eqref{l_condition}, there also exists a  constant $C_2>0$ such that}
    \|\bm{u}\|_{\bm{\ell}} \geq C_2\|\bm{u}\|_{\bm{H}^1_0(\Omega)}.
\end{align}
\end{subequations}
\end{lemma}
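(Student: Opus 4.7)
The plan is to prove the two inequalities separately, treating the upper bound as the easy direction and the lower bound as the substantive one that invokes the inf-sup machinery of the previous propositions together with Korn's inequality.

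For the upper bound, I would work element by element. Since $\bm{C}$ is bounded above by its largest eigenvalue $\lambda_{\max}$, we have $a_h^E(\bm{u},\bm{u}) \leq \lambda_{\max}\,\|\overline{\projLtwo{\bm{u}}}\|_{\bm{L}^2(E)}^2$. The operator $\bm{\Pi}^0_{\ell,E}$ is an orthogonal $L^2$ projection, hence a contraction: $\|\projLtwo{\bm{u}}\|_{\bm{L}^2(E)} \leq \|\bm{\varepsilon}(\bm{u})\|_{\bm{L}^2(E)}$. Finally, $\|\bm{\varepsilon}(\bm{u})\|_{\bm{L}^2(E)} \leq \|\nabla \bm{u}\|_{\bm{L}^2(E)}$ follows directly from the definition of the symmetric gradient. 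Summing over elements yields $\|\bm{u}\|_{\bm{\ell}}^2 \leq C_1^2\,|\bm{u}|_{\bm{H}^1(\Omega)}^2 \leq C_1^2\,\|\bm{u}\|_{\bm{H}^1_0(\Omega)}^2$. Note that this direction does not require the condition~\eqref{l_condition}.

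For the lower bound, I would combine the local coercivity of $\bm{C}$ with a discrete Korn-type estimate at the element level. Using $\lambda_{\min}(\bm{C})>0$, we obtain $a_h^E(\bm{u},\bm{u}) \geq \lambda_{\min}\,\|\projLtwo{\bm{u}}\|_{\bm{L}^2(E)}^2$. The crux is to establish that, under~\eqref{l_condition}, there exists a constant $c>0$ independent of $h_E$ such that
\begin{equation*}
\|\projLtwo{\bm{u}}\|_{\bm{L}^2(E)}^2 \;\geq\; c\,\|\bm{\varepsilon}(\bm{u})\|_{\bm{L}^2(E)}^2 \quad \forall \bm{u} \in \VEM.
\end{equation*}
This is the reverse of the contraction inequality used above and encodes the fact that the strain projection does not "lose" information on $\VEM$. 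Qualitatively, Theorem~\ref{projeqzero} already tells us that $\projLtwo{\bm{u}}=\bm{0}$ forces $\bm{\varepsilon}(\bm{u})=\bm{0}$; the quantitative version with a mesh-independent constant is precisely what Propositions~2--4 of~\cite{berrone2021lowest}, invoked through the Fortin operator and the inf-sup condition~\eqref{infsup}, provide. After this local estimate, I would sum over elements and apply Korn's inequality on $\bm{H}^1_0(\Omega)$ to bound $\|\bm{u}\|_{\bm{H}^1_0(\Omega)}^2 \leq C\,\|\bm{\varepsilon}(\bm{u})\|_{\bm{L}^2(\Omega)}^2$, and conclude $\|\bm{u}\|_{\bm{\ell}} \geq C_2\,\|\bm{u}\|_{\bm{H}^1_0(\Omega)}$.

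The main obstacle is the mesh-independence of the constant $c$ in the local inverse-type inequality. A purely algebraic/compactness argument on a fixed element would yield a constant depending on $E$; the mesh assumptions (star-shapedness with uniform chunkiness) are essential, and the cleanest route is to transfer the inf-sup estimate~\eqref{inf-sup}, which was constructed with norms scaled by $h_E$, into the required bound. A scaling argument to a reference configuration (using $\xi,\eta$ as defined earlier) then ensures that all constants that arise can be made mesh-independent under the standard VEM mesh regularity hypothesis.
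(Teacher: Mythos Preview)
Your upper-bound argument is essentially the paper's: both use boundedness of $\bm{C}$, the $L^2$-contraction property of $\bm{\Pi}^0_{\ell,E}$, and $\|\bm{\varepsilon}(\bm{u})\|\le\|\nabla\bm{u}\|$ to obtain $\|\bm{u}\|_{\bm{\ell}}\le C_1\|\bm{u}\|_{\bm{H}^1_0(\Omega)}$.

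For the lower bound, however, you take a genuinely different route. The paper does not invoke the inf-sup machinery or Korn's inequality at all; it simply observes that once $\|\cdot\|_{\bm{\ell}}$ is known to be a norm (via Theorem~\ref{projeqzero} and the previous proposition), both $\|\cdot\|_{\bm{\ell}}$ and $\|\cdot\|_{\bm{H}^1_0}$ are norms on the \emph{finite-dimensional} space $\bm{V}_{1,\bm{\ell}}$, and finite-dimensional norms are automatically equivalent. This yields $C_2$ in one line. Your approach instead seeks a quantitative local estimate $\|\projLtwo{\bm{u}}\|_{\bm{L}^2(E)}\ge c\,\|\bm{\varepsilon}(\bm{u})\|_{\bm{L}^2(E)}$ with $c$ independent of $h_E$, leveraging the Fortin/inf-sup results and then Korn's inequality globally. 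What your approach buys is a constant $C_2$ that is uniform in the mesh, which is what one actually wants for the downstream coercivity and error estimates; the paper's bare norm-equivalence argument, as written, gives a $C_2$ that a priori depends on the particular mesh $\mathcal{T}^h$. Your route is more work and requires the scaling/shape-regularity argument you correctly flag as the main obstacle, but it delivers the sharper statement.
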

\begin{proof}
We first estimate
\begin{align*}
    \|\bm{u}\|^2_{\bm{\ell}} &= \sum_{E} \int_{E}{\projLtwo{\bm{u}}:\mathbb{C}:\projLtwo{\bm{u}}\,d\bm{x}} \\
    &= \sum_{E}\int_{E}{\bm{\varepsilon}(\bm{u}):\mathbb{C}:\projLtwo{\bm{u}}\,d\bm{x}} \\ 
    &\leq \sum_{E}\|\bm{\varepsilon}(\bm{u})\|_{\bm{L}^2(E)}\|\mathbb{C}:\projLtwo{\bm{u}}\|_{\bm{L}^2(E)} \\ 
    &\leq C\|\bm{\varepsilon}(\bm{u})\|_{\bm{L}^2(\Omega)}\|\bm{\varepsilon}(\bm{u})\|_{\bm{L}^2(\Omega)} \\ 
    &\leq C_1 \|\bm{u}\|_{\bm{H}^1_0(\Omega)} \|\bm{u}\|_{\bm{H}^1_0(\Omega)} \\
    &\leq C_1\|\bm{u}\|^2_{\bm{H}^1_0(\Omega)}. 
\end{align*}
Now if we have $\ell$ that satisfies~\eqref{l_condition} for all $E$ , then $\|.\|_{\bm{\ell}}$ is also a norm. Since both $\|.\|_{\bm{\ell}}$ and $\|.\|_{\bm{H}^1_0}$ are norms in the finite-dimensional subspace $\bm{V}_{1,\bm{\ell}}$, they are equivalent. In particular, there exists a constant $C_2>0$ such that 
\begin{align*}
    \|\bm{u}\|_{\bm{\ell}} \geq C_2 \|\bm{u}\|_{\bm{H}^1_0(\Omega)}. 
    \tag*{\qedhere}
\end{align*} 
\end{proof}

We now show that the discrete bilinear form $a_h$ is continuous and coercive, which by Lax-Milgram theorem implies that a unique solution exists. 
\begin{theorem}
If $\ell(E)$ satisfies~\eqref{l_condition} for each $E$, then there \acrev{exist} constants $C_1, C_2>0$ such that the bilinear form defined in~\eqref{global_bilinearform} satisfies the inequalities
\begin{subequations}
\begin{align}
    |a_h(\bm{u},\bm{v})| &\leq C_1\|\bm{u}\|_{\bm{H}^1_0(\Omega)}\|\bm{v}\|_{\bm{H}^1_0(\Omega)} \\
\intertext{and} 
    a_h(\bm{v},\bm{v}) &\geq C_2\|\bm{v}\|^2_{\bm{H}^1_0(\Omega)}.
\end{align}
\end{subequations}
\end{theorem}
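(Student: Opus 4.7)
The plan is to handle continuity and coercivity separately, with coercivity being essentially immediate from the norm equivalence already established in Lemma~\ref{lemma3}.

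For coercivity, observe that by the very definition~\eqref{l_norm} of $\|\cdot\|_{\bm{\ell}}$, we have $a_h(\bm{v},\bm{v}) = \|\bm{v}\|_{\bm{\ell}}^2$. Since $\ell(E)$ satisfies~\eqref{l_condition} for each element, Lemma~\ref{lemma3} applies and yields a constant $c>0$ independent of $h$ such that $\|\bm{v}\|_{\bm{\ell}} \geq c\|\bm{v}\|_{\bm{H}^1_0(\Omega)}$. Squaring gives coercivity with $C_2 = c^2$. No additional work is needed here beyond invoking the already-proved equivalence.

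For continuity, the strategy is to estimate elementwise and then sum. On each $E$, write
\begin{align*}
    |a_h^E(\bm{u},\bm{v})| = \left| \int_E \left(\overline{\projLtwo{\bm{v}}}\right)^T \bm{C}\, \overline{\projLtwo{\bm{u}}}\, d\bm{x} \right|
    \leq \|\bm{C}\|_\infty\, \|\projLtwo{\bm{u}}\|_{\bm{L}^2(E)}\, \|\projLtwo{\bm{v}}\|_{\bm{L}^2(E)},
\end{align*}
using Cauchy--Schwarz and boundedness of $\bm{C}$ (which holds since the Young's modulus and Poisson ratio give bounded entries for $\nu \in [0,1/2)$). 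Next, apply the standard stability of the $L^2$ projection onto $\polysym$ to conclude $\|\projLtwo{\bm{w}}\|_{\bm{L}^2(E)} \leq \|\bm{\varepsilon}(\bm{w})\|_{\bm{L}^2(E)}$ for any $\bm{w} \in \bm{H}^1(E)$, since the projection is an orthogonal projection in the $L^2$ inner product. This step uses only the definition~\eqref{defL2proj} and the Pythagorean identity. Finally, bound $\|\bm{\varepsilon}(\bm{w})\|_{\bm{L}^2(E)} \leq \|\bm{w}\|_{\bm{H}^1(E)}$.

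Combining these bounds elementwise and applying Cauchy--Schwarz to the sum over elements yields
\begin{align*}
    |a_h(\bm{u},\bm{v})| &\leq \|\bm{C}\|_\infty \sum_E \|\bm{\varepsilon}(\bm{u})\|_{\bm{L}^2(E)}\|\bm{\varepsilon}(\bm{v})\|_{\bm{L}^2(E)} \\
    &\leq \|\bm{C}\|_\infty \|\bm{\varepsilon}(\bm{u})\|_{\bm{L}^2(\Omega)}\|\bm{\varepsilon}(\bm{v})\|_{\bm{L}^2(\Omega)}
    \leq C_1 \|\bm{u}\|_{\bm{H}^1_0(\Omega)}\|\bm{v}\|_{\bm{H}^1_0(\Omega)},
\end{align*}
where the last step uses Korn's inequality (on $\bm{H}^1_0(\Omega)$) or simply the trivial bound of the symmetric gradient by the full gradient. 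The main technical obstacle is the invocation of $L^2$-stability of the strain projection: one should verify that the projection $\bm{\Pi}^0_{\ell,E}\bm{\varepsilon}(\cdot)$ acts as a contraction on the strain field in the Frobenius norm, which follows directly from its characterization as an $L^2$-orthogonal projection on the space of symmetric matrix polynomials. Everything else is a clean application of Cauchy--Schwarz, bounded material coefficients, and the standard inequality between the symmetric gradient and the $H^1$ seminorm.
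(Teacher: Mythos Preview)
Your proof is correct and follows essentially the same approach as the paper: coercivity is obtained directly from $a_h(\bm{v},\bm{v}) = \|\bm{v}\|_{\bm{\ell}}^2$ together with Lemma~\ref{lemma3}, and continuity is obtained elementwise via Cauchy--Schwarz, boundedness of $\bm{C}$, the $L^2$-contraction property of $\bm{\Pi}^0_{\ell,E}$, and the bound of the symmetric gradient by the $\bm{H}^1_0$ norm. Your writeup simply makes explicit the stability of the $L^2$ projection and the final passage from $\|\bm{\varepsilon}(\cdot)\|_{\bm{L}^2}$ to $\|\cdot\|_{\bm{H}^1_0}$, which the paper leaves implicit.
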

\begin{proof}
We estimate  the first inequality:
\begin{align*}
|a_h(\bm{u},\bm{v})| &=\sum_{E} \int_{E}{\projLtwo{\bm{u}}:\mathbb{C}:\projLtwo{\bm{v}}\,d\bm{x}} \\
&\leq C\sum_{E}{\|\projLtwo{\bm{u}}\|_{\bm{L}^2(E)} \|\projLtwo{\bm{v}}\|_{\bm{L}^2(E)}} \\
&\leq C \|\bm{u}\|_{\bm{H}^1_0(\Omega)} \|\bm{v}\|_{\bm{H}^1_0(\Omega)}.  
\end{align*}
For the second inequality, on using the
definition of the bilinear form $a_h$ and Lemma~\ref{lemma3}, 
we have 
\begin{align*}
    a_h(\bm{v},\bm{v}) = \|\bm{v}\|^2_{\bm{\ell}} \geq C\|\bm{v}\|^2_{\bm{H}^1_0(\Omega)}. \tag*{\qedhere}
\end{align*}
\end{proof}

\subsection{Error estimates}
Now that we have well-posedness of the discrete problem, we study the errors of the approximation. In particular, we consider the errors in the $\bm{L}^2$ and $\bm{H}_0^1$ norms.
Many of the techniques and estimates 
are detailed in~\cite{brenner,estimatesVEM,mixed:fem,ciarlet}.
We introduce lemmas adapted from~\cite{berrone2021lowest} that we expect can be extended to our specific case. We first define an interpolation function $\bm{u}_I:\bm{H}^2(\Omega) \to \bm{V}_{1,\bm{\ell}}$ by 
\begin{align}
    \bm{u}_I= \sum_{i}{\texttt{dof}_{i}(\bm{u})\bm{\xi}_{i}}, \label{interpolation_funct}
\end{align}
where $\texttt{dof}_{i}(\bm{u})$ is the $i$-th degree of freedom of $\bm{u}$ and $\bm{\xi}_i$ is a global basis function satisfying $\texttt{dof}_{j}(\bm{\xi}_i) = \delta_{ij}$.
\begin{lemma}
Let $\bm{w}$ be any sufficiently smooth function, 
and let $\bm{w}_I \in \bm{V}_{1,\bm{\ell}} $ be the associated interpolation function~\eqref{interpolation_funct}. Then the following inequality holds for some constant $C>0$ and all $h>0$:
\begin{align}\label{L2plusH1estimate}
    \|\bm{w}-\bm{w}_I\|_{\bm{L}^2(\Omega)} + h\|\bm{w}-\bm{w}_I\|_{\bm{H}^1_0(\Omega)} \leq Ch^2|\bm{w}|_{\bm{H}^2(\Omega)}.
\end{align}
\end{lemma}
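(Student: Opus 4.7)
The plan is to follow the standard Bramble-Hilbert/Dupont-Scott approach used throughout the VEM literature (e.g.~\cite{estimatesVEM}) and adapted to the enlarged virtual element space in~\cite{berrone2021lowest}. First I would localize the estimate: since $\|\cdot\|_{\bm{L}^2(\Omega)}^2$ and $|\cdot|_{\bm{H}^1(\Omega)}^2$ decompose as sums over the elements of $\mathcal{T}^h$ and $h = \max_E h_E$, it is enough to prove the local bound
\begin{equation*}
\|\bm{w}-\bm{w}_I\|_{\bm{L}^2(E)} + h_E|\bm{w}-\bm{w}_I|_{\bm{H}^1(E)} \le C h_E^2 |\bm{w}|_{\bm{H}^2(E)}
\end{equation*}
for each star-shaped polygon $E$, with the constant $C$ depending only on the mesh regularity parameters. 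I would then verify that $[\mathbb{P}_1(E)]^2 \subset \VEM$: every $\bm{p} \in [\mathbb{P}_1(E)]^2$ has $\Delta \bm{p} = \bm{0} \in [\mathbb{P}_{\ell+1}(E)]^2$, linear edge traces, and trivially satisfies the enhancement condition~\eqref{enrichment_condition} because $\proj{\bm{p}} = \bm{p}$. As a consequence, the interpolation operator given in~\eqref{interpolation_funct} preserves linear polynomials, i.e.\ $\bm{p}_I = \bm{p}$ for every $\bm{p} \in [\mathbb{P}_1(E)]^2$.

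Next, I would invoke the Bramble-Hilbert lemma on the star-convex polygon $E$ (using the mesh assumptions from Section~\ref{sec:mathprelim}) to produce a componentwise linear polynomial $\bm{p}_{\bm{w}} \in [\mathbb{P}_1(E)]^2$ satisfying
\begin{equation*}
\|\bm{w}-\bm{p}_{\bm{w}}\|_{\bm{H}^k(E)} \le C h_E^{2-k}|\bm{w}|_{\bm{H}^2(E)}, \qquad k=0,1,2.
\end{equation*}
Combined with the polynomial-preservation property $\bm{p}_{{\bm{w}},I} = \bm{p}_{\bm{w}}$ and the linearity of the interpolation operator, I can decompose
\begin{equation*}
\bm{w}-\bm{w}_I = (\bm{w}-\bm{p}_{\bm{w}}) - (\bm{w}-\bm{p}_{\bm{w}})_I ,
\end{equation*}
so by the triangle inequality it suffices to bound the two terms separately. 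The first term is controlled directly by Bramble-Hilbert, so the crux reduces to establishing local stability of the interpolant,
\begin{equation*}
\|\bm{v}_I\|_{\bm{L}^2(E)} + h_E |\bm{v}_I|_{\bm{H}^1(E)} \le C \bigl(\|\bm{v}\|_{\bm{L}^2(E)} + h_E |\bm{v}|_{\bm{H}^1(E)} + h_E^2 |\bm{v}|_{\bm{H}^2(E)}\bigr)
\end{equation*}
applied to $\bm{v} = \bm{w}-\bm{p}_{\bm{w}}$.

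The stability estimate is the main obstacle, and I would establish it by a scaling argument. Because the degrees of freedom of $\VEM$ are the vertex values, the interpolant $\bm{v}_I$ is completely determined by the $2N_E$ scalars $\bm{v}(\bm{x}_j)$. Mapping $E$ to a reference element of unit diameter, mesh regularity bounds $N_E$ uniformly and places the vertices in a compact configuration, so on the reference element one has $\|\bm{v}_I\|_{\bm{H}^1(\hat E)} \le C \max_j |\bm{v}(\hat{\bm{x}}_j)| \le C \|\bm{v}\|_{\bm{L}^\infty(\hat E)}$, and the 2D Sobolev embedding $\bm{H}^2(\hat E) \hookrightarrow \bm{C}^0(\hat E)$ converts the right-hand side into $\|\bm{v}\|_{\bm{H}^2(\hat E)}$. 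Scaling back to $E$ with the standard powers of $h_E$ for Sobolev norms yields the stability bound, after which combining with the Bramble-Hilbert estimate for $\bm{w}-\bm{p}_{\bm{w}}$ closes the proof.

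The genuinely delicate point is that the enlarged VEM basis functions $\bm{\varphi}_i$ are defined implicitly through the enhancement condition~\eqref{enrichment_condition}, which couples the vertex DOFs with higher-order moments; this is why the uniform bound on the reference element is not completely elementary and relies on the mesh regularity assumption together with a compactness argument over the finite-dimensional space $\VEM(\hat E)$. Once the scaling argument is accepted, as was done in Proposition~4.1 of~\cite{berrone2021lowest} for the scalar case, the extension to the vector-valued setting is essentially componentwise, which is why the lemma can be invoked as an adaptation rather than proved ex novo.
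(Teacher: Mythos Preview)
The paper does not actually prove this lemma: it is stated without proof, introduced only with the sentence ``We introduce lemmas adapted from~\cite{berrone2021lowest} that we expect can be extended to our specific case,'' and then immediately followed by the next (also unproved) lemma. So there is no proof in the paper to compare against.

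Your sketch is the standard Bramble--Hilbert plus scaling argument that underlies the cited references~\cite{berrone2021lowest,estimatesVEM,brenner}, and it is correct in outline: localize, check $[\mathbb{P}_1(E)]^2\subset\VEM$ so the interpolant reproduces linears, subtract a Bramble--Hilbert polynomial, and reduce to a stability bound for the vertex interpolant obtained by scaling and Sobolev embedding. You have also correctly flagged the one nontrivial point, namely that uniform control of the implicitly defined basis functions $\bm{\varphi}_i$ in $\VEM$ requires the mesh-regularity hypotheses and the argument of~\cite{berrone2021lowest}. In effect you have supplied more detail than the paper itself, and what you wrote is precisely the content the authors are deferring to the cited literature.
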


\begin{lemma}
For any sufficiently smooth function $\bm{w}$, there \acrev{exist} constants $C_1$, $C_2 >0$ such that 
\begin{subequations}
\begin{align}
    \|\bm{\Pi}_{\ell}^0{\bm{\varepsilon}(\bm{w})}-\bm{\varepsilon}(\bm{w})\|_{\bm{L}^2(\Omega)} &\leq C_1 h |\bm{w}|_{\bm{H}^2(\Omega)}, \label{strainestimate} \\ 
    \|\bm{\Pi}_{0}^0\bm{w}-\bm{w} \|_{\bm{L}^2(\Omega)} &\leq C_2 h \|\bm{w}\|_{\bm{H}^1_0(\Omega)} , \label{proj0estimate}
\end{align}
\end{subequations}
where we denote 
$\bm{\Pi}_{0}^0\bm{w}$ as the $L^2$ projection of $\bm{w}$ onto the space of constants. 
\end{lemma}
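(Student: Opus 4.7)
The plan is to prove both inequalities locally on each element $E$ and then sum. Both estimates rely on the standard best-approximation property of the $L^2$ projection combined with a Bramble–Hilbert / Deny–Lions argument on a star-shaped polygon, which gives polynomial approximation rates in terms of one higher Sobolev seminorm. Since the proofs of~\eqref{strainestimate} and~\eqref{proj0estimate} have the same flavor, I would handle them in parallel.

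For~\eqref{strainestimate}, I would first localize and invoke the defining orthogonality of $\bm{\Pi}^0_{\ell,E}$, which yields the best-approximation bound
\begin{equation*}
    \|\bm{\Pi}^0_{\ell,E}\bm{\varepsilon}(\bm{w})-\bm{\varepsilon}(\bm{w})\|_{\bm{L}^2(E)} \leq \|\bm{q}-\bm{\varepsilon}(\bm{w})\|_{\bm{L}^2(E)} \quad \forall \bm{q}\in \polysym.
\end{equation*}
Choosing $\bm{q}$ as a suitable averaged Taylor polynomial of degree $\ell \geq 1$ on the star-shaped element $E$ (this is where the standard mesh assumptions from~\cite{basicprinciple} enter), the Bramble–Hilbert lemma as stated in~\cite{brenner} gives
\begin{equation*}
    \|\bm{q}-\bm{\varepsilon}(\bm{w})\|_{\bm{L}^2(E)} \leq C h_E |\bm{\varepsilon}(\bm{w})|_{\bm{H}^{1}(E)} \leq C h_E |\bm{w}|_{\bm{H}^2(E)}.
\end{equation*}
Squaring, summing over all $E \in \mathcal{T}^h$, and bounding $h_E \leq h$ yields the global estimate.

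For~\eqref{proj0estimate}, the argument is essentially the same but one degree lower: the best-approximation property of $\bm{\Pi}^0_{0,E}$ (the elementwise average) combined with the Poincaré–Wirtinger inequality on a star-shaped polygon gives
\begin{equation*}
    \|\bm{\Pi}^0_{0,E}\bm{w}-\bm{w}\|_{\bm{L}^2(E)} \leq C h_E |\bm{w}|_{\bm{H}^1(E)},
\end{equation*}
and squaring and summing yields the bound with $\|\bm{w}\|_{\bm{H}^1_0(\Omega)}$ on the right-hand side.

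The main technical obstacle, as usual in VEM analysis, is not the inequalities themselves but ensuring that the constants in the Bramble–Hilbert and Poincaré estimates are uniform in $h$, i.e., depend only on the shape-regularity parameters of $\mathcal{T}^h$ and not on the specific geometry of each polygon. This uniformity is the content of the polygonal Bramble–Hilbert results from~\cite{brenner,estimatesVEM} under the standard star-convexity and edge-length assumptions on the mesh. Once that machinery is invoked, the rest of the proof is routine.
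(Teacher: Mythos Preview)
Your argument is correct and is exactly the standard route: elementwise best-approximation of the $L^2$ projection followed by Bramble--Hilbert (for~\eqref{strainestimate}) and Poincar\'e--Wirtinger (for~\eqref{proj0estimate}) on star-shaped polygons, with uniformity of constants coming from the mesh assumptions via~\cite{brenner,estimatesVEM}. The paper itself does not supply a proof of this lemma; it is stated without argument as one of the ``lemmas adapted from~\cite{berrone2021lowest}'' and treated as a standard polynomial-approximation result, so there is nothing further to compare.
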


Now we consider the error in $\bm{H}^1_0$.
\begin{proposition}
Let $\bm{u}$ be the exact solution to the strong problem in~\eqref{strongproblem}, and $\bm{f}$ the associated body force. For $h$ sufficiently small, there exists a constant $C>0$ such that the error of the solution $\bm{u}_h$ to the discrete weak problem is bounded in the $\bm{H}^{1}_0$ norm by
\begin{align}\label{h0estimate}
    \|\bm{u}-\bm{u}_h\|_{\bm{H}^1_0(\Omega)} \leq Ch\left(|\bm{u}|_{\bm{H}^2(\Omega)} + \|\bm{f}\|_{\bm{L}^2(\Omega)}\right).
\end{align}
\end{proposition}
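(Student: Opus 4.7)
The plan is to adapt the standard Strang-type consistency argument for VEM, using the coercivity and continuity of $a_h$ established in the preceding theorem together with the polynomial consistency of $a_h^E$ on $[\mathbb{P}_1(E)]^2$ and the interpolation and projection estimates~\eqref{L2plusH1estimate}--\eqref{proj0estimate}.

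First I would insert the interpolant $\bm{u}_I \in \bm{V}_{1,\bm{\ell}}$ from~\eqref{interpolation_funct} via the triangle inequality
\begin{align*}
\|\bm{u}-\bm{u}_h\|_{\bm{H}^1_0(\Omega)} \le \|\bm{u}-\bm{u}_I\|_{\bm{H}^1_0(\Omega)} + \|\bm{u}_I-\bm{u}_h\|_{\bm{H}^1_0(\Omega)} ,
\end{align*}
and bound the first summand by $Ch|\bm{u}|_{\bm{H}^2(\Omega)}$ directly from~\eqref{L2plusH1estimate}. For the second summand, set $\bm{\delta}_h := \bm{u}_I - \bm{u}_h \in \bm{V}_{1,\bm{\ell}}$, apply coercivity to get $C\|\bm{\delta}_h\|^2_{\bm{H}^1_0(\Omega)} \le a_h(\bm{\delta}_h, \bm{\delta}_h)$, and then use the discrete and continuous weak problems $a_h(\bm{u}_h,\bm{\delta}_h) = b_h(\bm{\delta}_h)$ and $a(\bm{u},\bm{\delta}_h) = b(\bm{\delta}_h)$ to split
\begin{align*}
a_h(\bm{\delta}_h, \bm{\delta}_h) = \bigl[a_h(\bm{u}_I,\bm{\delta}_h) - a(\bm{u},\bm{\delta}_h)\bigr] + \bigl[b(\bm{\delta}_h) - b_h(\bm{\delta}_h)\bigr].
\end{align*}

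To handle the bilinear-form consistency term, on each element $E$ I introduce a local polynomial $\bm{u}_\pi \in [\mathbb{P}_1(E)]^2$ chosen so that a Bramble--Hilbert-type estimate $\|\bm{u}-\bm{u}_\pi\|_{\bm{H}^m(E)} \le Ch^{2-m}|\bm{u}|_{\bm{H}^2(E)}$ for $m=0,1$ holds on star-shaped polygons. Because $\bm{\varepsilon}(\bm{u}_\pi)$ is constant, it already lies in the symmetric matrix polynomial space, so the strain projection acts as the identity and one obtains the polynomial consistency $a_h^E(\bm{u}_\pi,\bm{v}) = a^E(\bm{u}_\pi,\bm{v})$. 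Decomposing
\begin{align*}
a_h^E(\bm{u}_I,\bm{\delta}_h) - a^E(\bm{u},\bm{\delta}_h) = a_h^E(\bm{u}_I-\bm{u}_\pi,\bm{\delta}_h) + a^E(\bm{u}_\pi-\bm{u},\bm{\delta}_h),
\end{align*}
applying Cauchy--Schwarz together with the fact that $L^2$ projections are nonexpansive (to drop the strain projection in $a_h^E$), writing $\bm{u}_I-\bm{u}_\pi = (\bm{u}_I-\bm{u}) + (\bm{u}-\bm{u}_\pi)$, and invoking~\eqref{L2plusH1estimate} together with the Bramble--Hilbert bound, summation over $E$ yields
\begin{align*}
\bigl|a_h(\bm{u}_I,\bm{\delta}_h) - a(\bm{u},\bm{\delta}_h)\bigr| \le Ch\,|\bm{u}|_{\bm{H}^2(\Omega)}\,\|\bm{\delta}_h\|_{\bm{H}^1_0(\Omega)}.
\end{align*}

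For the load-consistency term, since $\bm{f}_h = \bm{\Pi}^0_0\bm{f}$ is piecewise constant, the $L^2$-orthogonality $\int_E(\bm{f}-\bm{f}_h)\cdot \bm{c}\, d\bm{x} = 0$ for every constant $\bm{c}$ lets me subtract $\bm{\Pi}^0_0 \bm{\delta}_h$ elementwise and write
\begin{align*}
b(\bm{\delta}_h) - b_h(\bm{\delta}_h) = \sum_E \int_E (\bm{f}-\bm{f}_h)\cdot (\bm{\delta}_h - \bm{\Pi}^0_0\bm{\delta}_h)\, d\bm{x}.
\end{align*}
Cauchy--Schwarz, the $L^2$-stability of $\bm{\Pi}^0_0$ applied to $\bm{f}$, and estimate~\eqref{proj0estimate} applied to $\bm{\delta}_h$ bound this by $Ch\|\bm{f}\|_{\bm{L}^2(\Omega)}\|\bm{\delta}_h\|_{\bm{H}^1_0(\Omega)}$. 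Combining all pieces, dividing through by $\|\bm{\delta}_h\|_{\bm{H}^1_0(\Omega)}$, and adding the interpolation error delivers~\eqref{h0estimate}. The most delicate step will be the element-by-element approximation involving $\bm{u}_\pi$: it requires a Bramble--Hilbert estimate uniform in the mesh geometry and a careful application of polynomial consistency, both of which rely in an essential way on the star-convexity assumption placed on $\mathcal{T}^h$.
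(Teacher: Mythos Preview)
Your argument is correct, but it differs from the paper's route in how the bilinear-form consistency term is handled. You insert a local first-order polynomial $\bm{u}_\pi$ on each element, invoke the polynomial consistency $a_h^E(\bm{u}_\pi,\cdot)=a^E(\bm{u}_\pi,\cdot)$, and close with a Bramble--Hilbert bound on $\bm{u}-\bm{u}_\pi$. The paper instead adds and subtracts the exact solution $\bm{u}$ inside $a_h$, obtaining the three pieces $-(\bm{f}_h,\bm{\delta}_h)$, $a_h(\bm{u}_I-\bm{u},\bm{\delta}_h)$, and $a_h(\bm{u},\bm{\delta}_h)$; the last is rewritten using the definition of $\bm{\Pi}^0_\ell$ to peel off $a(\bm{u},\bm{\delta}_h)=(\bm{f},\bm{\delta}_h)$ plus a remainder controlled directly by the already-stated strain projection estimate~\eqref{strainestimate}. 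The load-consistency step is identical in both. Your approach is the classical VEM/Strang decomposition and makes the role of polynomial consistency explicit, at the cost of needing a separate Bramble--Hilbert estimate uniform in the polygon geometry; the paper's approach is a bit shorter because it reuses~\eqref{strainestimate} in place of Bramble--Hilbert, but it requires evaluating $a_h$ at the continuous solution $\bm{u}\notin\bm{V}_{1,\bm{\ell}}$, which is legitimate here since $\bm{\Pi}^0_\ell\bm{\varepsilon}(\cdot)$ is defined on all of $\bm{H}^1(E)$.
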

\begin{proof}
Let $\bm{u}_h$ be the unique solution to the discrete problem~\eqref{vem_weak_problem}, $\bm{u}$ the exact solution to~\eqref{strongproblem} and $\bm{u}_I$ the associated interpolation function~\eqref{interpolation_funct}. 
We can then estimate the error as: 
\begin{align}
    \|\bm{u}-\bm{u}_h\|_{\bm{H}^1_0(\Omega)} \leq \|\bm{u}-\bm{u}_I \|_{\Hone (\Omega)} + \|\bm{u}_I-\bm{u}_h \|_{\Hone (\Omega)}. 
\end{align}
For the first term, we apply~\eqref{L2plusH1estimate} to get the bound 
\begin{align}
    \|\bm{u}-\bm{u}_I\|_{\Hone (\Omega)} \leq Ch|\bm{u}|_{\bm{H}^2 (\Omega)}.
\end{align}
For the second term, we have the estimate
\begin{align}
    C\|\bm{u}_I-\bm{u}_h\|^2_{\Hone (\Omega)} &\leq \|\bm{u}_I-\bm{u}_h\|_{\ell}^2 = a_h(\bm{u}_I-\bm{u}_h,\bm{u}_I-\bm{u}_h) \nonumber \\
    &\leq-a_h(\bm{u}_h,\bm{u}_I-\bm{u}_h) + a_h(\bm{u}_I,\bm{u}_I-\bm{u}_h) \nonumber \\ 
    &\leq -(\bm{f}_h,\bm{u}_I-\bm{u}_h) + a_h(\bm{u}_I,\bm{u}_I-\bm{u}_h) \nonumber \\
    &\leq -(\bm{f}_h,\bm{u}_I-\bm{u}_h) + a_h(\bm{u}_I-\bm{u}+\bm{u},\bm{u}_I-\bm{u}_h) \nonumber \\ 
    &\leq \underbrace{(-\bm{f}_h,\bm{u}_I-\bm{u}_h)}_{A} + \underbrace{a_h(\bm{u}_I-\bm{u},\bm{u}_I-\bm{u}_h)}_{B}+\underbrace{a_h(\bm{u},\bm{u}_I-\bm{u}_h)}_{C}.\label{estimate_u_I}
\end{align}
We estimate each of the three terms. For term $B$ in~\eqref{estimate_u_I}, we use Cauchy-Schwarz and~\eqref{L2plusH1estimate} to estimate
\begin{align*}
a_h(\bm{u}_I-\bm{u},\bm{u}_I-\bm{u}_h) &= \sum_{E}{\int_{E}{\projLtwo{\bm{u}_I-\bm{u}}}:\mathbb{C}:\projLtwo{\bm{u}_I-\bm{u}_h}\,d\bm{x}}\\
&\leq C\sum_{E}{\|\projLtwo{\bm{u}_I-\bm{u}}\|_{\bm{L}^2(E)} \|\projLtwo{\bm{u}_I-\bm{u}_h}\|_{\bm{L}^2(E)}} \\ 
&\leq C \|\bm{\Pi}^{0}_{\ell}\bm{\varepsilon}{(\bm{u}_I-\bm{u})}\|_{\bm{L}^2(\Omega)} \|\bm{\Pi}^0_{\ell}\bm{\varepsilon}({\bm{u}_I-\bm{u}_h})\|_{\bm{L}^2(\Omega)} \\
&\leq C \|\bm{u}_I-\bm{u}\|_{\Hone (\Omega)} \|\bm{u}_I-\bm{u}_h\|_{\Hone (\Omega)} \\
&\leq Ch|\bm{u}|_{\bm{H}^2 (\Omega)}\|\bm{u}_I-\bm{u}_h\|_{\Hone (\Omega)}. 
\end{align*}
For term $C$ in~\eqref{estimate_u_I}, we write 
\begin{align*}
    a_h(\bm{u},\bm{u}_I-\bm{u}_h) &= \sum_{E}{\int_{E}{\projLtwo{\bm{u}} : \mathbb{C}: \projLtwo{\bm{u}_I-\bm{u}_h}}\,d\bm{x}} \\ 
    &= \sum_{E}{\int_{E}{\projLtwo{\bm{u}}:\mathbb{C}:\bm{\varepsilon}(\bm{u}_I-\bm{u}_h)}\,d\bm{x}} \\ 
    &= \sum_{E}{\int_{E}{[ (\projLtwo{\bm{u}}-\bm{\varepsilon}(\bm{u})+\bm{\varepsilon}(\bm{u})):\mathbb{C}:\bm{\varepsilon}(\bm{u}_I-\bm{u}_h)} ]\,d\bm{x}} \\
    &= \sum_{E}{\int_{E}{(\projLtwo{\bm{u}}-\bm{\varepsilon}(\bm{u})) : \mathbb{C}:\bm{\varepsilon}(\bm{u}_I-\bm{u}_h)}\,d\bm{x}} \\ &\quad \ + \sum_{E}{\int_{E}{\bm{\varepsilon}(\bm{u}):\mathbb{C}:\bm{\varepsilon}(\bm{u}_I-\bm{u}_h)}\,d\bm{x}}.
    \end{align*}
    Then applying the definition of the bilinear form (\ref{weakbilinearform}) and using Cauchy-Schwarz inequality, we write
\begin{align*}
    a_h(\bm{u},\bm{u}_I-\bm{u}_h) &= \sum_{E}{\Bigr [ \int_{E}{(\projLtwo{\bm{u}}-\bm{\varepsilon}(\bm{u})) : \mathbb{C}:\bm{\varepsilon}(\bm{u}_I-\bm{u}_h)}}\,d\bm{x} \Bigr ] + a(\bm{u},\bm{u}_I-\bm{u}_h) \\ 
    &= \sum_{E}{\Bigr [\int_{E}{(\projLtwo{\bm{u}}-\bm{\varepsilon}(\bm{u})) : \mathbb{C}:\bm{\varepsilon}(\bm{u}_I-\bm{u}_h)}}\,d\bm{x}\Bigr ] + (\bm{f},\bm{u}_I-\bm{u}_h) \\ 
    &\leq Ch|\bm{u}|_{\bm{H}^2(\Omega)}\|\bm{u}_I-\bm{u}_h\|_{\Hone (\Omega)} + (\bm{f},\bm{u}_I-\bm{u}_h).
\end{align*}
Combining the three terms, we have

\begin{align*}
    C\|\bm{u}_I-\bm{u}_h\|^2_{\Hone (\Omega)} \leq (\bm{f}-\bm{f}_h,\bm{u}_I-\bm{u}_h)+  C_1h|\bm{u}|_{\bm{H}^2(\Omega)}\|\bm{u}_I-\bm{u}_h\|_{\Hone (\Omega)}.
\end{align*}
To estimate the term $(\bm{f}-\bm{f}_h,\bm{u}_I-\bm{u}_h)$, it is sufficient to take $\bm{f}_h = \bm{\Pi}_{0}^0\bm{f}$ as the $L^2$ projection onto constants.  
\begin{align*}
    (\bm{f}-\bm{f}_h,\bm{u}_I-\bm{u}_h) &=(\bm{f}-\bm{\Pi}^0_{0}\bm{f},\bm{u}_I-\bm{u}_h) \\
    &=\sum_{E}\int_{E}{{(\bm{f}-\bm{\Pi}^0_{0}\bm{f} )\cdot (\bm{u}_I-\bm{u}_h) }\,d\bm{x}} \\
    &= \sum_{E}{\Bigr[\int_{E}{\bm{f}\cdot (\bm{u}_I-\bm{u}_h)\,d\bm{x}} - \int_{E}{\bm{\Pi}^0_{0}}\bm{f}\cdot (\bm{u}_I-\bm{u}_h)\,d\bm{x}\Bigr]} \\
    &= \sum_{E}{\Bigr[ \int_{E}{\bm{f}\cdot (\bm{u}_I-\bm{u}_h)\,d\bm{x}} - \int_{E}{\bm{f}\cdot \bm{\Pi}^0_{0}}(\bm{u}_I-\bm{u}_h)\,d\bm{x} \Bigr]} \\
    &= \sum_{E}{\int_{E}{\bm{f}\cdot \left[(\bm{u}_I-\bm{u}_h)-\bm{\Pi}^0_{0}(\bm{u}_I-\bm{u}_h)\right]}\,d\bm{x}}\\
    &\leq \sum_{E}{\|\bm{f}\|_{\bm{L}^2(E)}\|(\bm{u}_I-\bm{u}_h)-\bm{\Pi}^0_{0}(\bm{u}_I-\bm{u}_h))\|_{\bm{L}^2(E)}}\\
    &\leq C_1h\|\bm{f}\|_{\bm{L}^2(\Omega)}\|\bm{u}_I-\bm{u}_h\|_{\Hone (\Omega)}. 
\end{align*}
On combining the terms, we obtain 
\begin{align*}
    C\|\bm{u}_I-\bm{u}_h\|^2_{\Hone (\Omega)} \leq C_1h(\|\bm{f}\|_{\bm{L}^2 (\Omega)}+|\bm{u}|_{\bm{H}^2(\Omega)})\|\bm{u}_I-\bm{u}_h\|_{\Hone (\Omega)}.
\end{align*}
Now we have the estimate of the $\Hone$ error as
\begin{align*}
    \|\bm{u}-\bm{u}_h\|_{\Hone (\Omega)} &\leq C_1h|\bm{u}|_{\bm{H}^2(\Omega)} + C_2h(\|\bm{f}\|_{\bm{L}^2(\Omega)}+|\bm{u}|_{\bm{H}^2(\Omega)})\\
    &\leq Ch(\|\bm{f}\|_{\bm{L}^2 (\Omega)} + |\bm{u}|_{\bm{H}^2 (\Omega)}).
    \tag*{\qedhere}
\end{align*}
\end{proof}

With the error in $\Hone$, we can also find an error estimate for the $\bm{L}^2$ norm.
\begin{proposition}
Let $\bm{u} $ be the exact solution to the strong problem (\ref{strongproblem}), and $\bm{f}$ the associated body force. For $h$ sufficiently small, there exists a constant $C>0$ such that the error of the solution $\bm{u}_h$ to the discrete weak problem is bounded in the $\bm{L}^2$ norm by
\begin{align}
    \|\bm{u}-\bm{u}_h\|_{\bm{L}^2(\Omega)} \leq Ch^2\left(|\bm{u}|_{\bm{H}^2(\Omega)} + \|\bm{f}\|_{\bm{H}^1(\Omega)}\right).
\end{align}
\end{proposition}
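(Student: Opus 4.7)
The plan is to run an Aubin--Nitsche duality argument, using the elliptic regularity of the continuous problem together with the $\bm{H}^1_0$ error estimate in~\eqref{h0estimate} and the interpolation/projection bounds~\eqref{L2plusH1estimate}--\eqref{proj0estimate}. Specifically, I would introduce the auxiliary dual problem: find $\bm{z}\in \bm{U}_0$ such that $a(\bm{w},\bm{z})=(\bm{u}-\bm{u}_h,\bm{w})$ for all $\bm{w}\in \bm{U}_0$. Assuming the standard $\bm{H}^2$-regularity $|\bm{z}|_{\bm{H}^2(\Omega)}\leq C\|\bm{u}-\bm{u}_h\|_{\bm{L}^2(\Omega)}$ for the elasticity problem on $\Omega$, taking $\bm{w}=\bm{u}-\bm{u}_h$ gives $\|\bm{u}-\bm{u}_h\|^2_{\bm{L}^2(\Omega)}=a(\bm{u}-\bm{u}_h,\bm{z})$.

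Next I would insert the interpolant $\bm{z}_I$ of $\bm{z}$ in $\bm{V}_{1,\bm{\ell}}$ and split
\begin{align*}
\|\bm{u}-\bm{u}_h\|^2_{\bm{L}^2(\Omega)} = a(\bm{u}-\bm{u}_h,\bm{z}-\bm{z}_I) + \bigl[a(\bm{u},\bm{z}_I)-a_h(\bm{u}_h,\bm{z}_I)\bigr] + \bigl[a_h(\bm{u}_h,\bm{z}_I)-a(\bm{u}_h,\bm{z}_I)\bigr].
\end{align*}
For the first bracket I would use continuity of $a$, the previous $\bm{H}^1_0$ error estimate~\eqref{h0estimate}, and~\eqref{L2plusH1estimate} applied to $\bm{z}$, obtaining a bound of the form $Ch^2(|\bm{u}|_{\bm{H}^2(\Omega)}+\|\bm{f}\|_{\bm{L}^2(\Omega)})|\bm{z}|_{\bm{H}^2(\Omega)}$.

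For the second bracket I would use the continuous weak form $a(\bm{u},\bm{z}_I)=b(\bm{z}_I)$ together with the discrete equation $a_h(\bm{u}_h,\bm{z}_I)=b_h(\bm{z}_I)$ to rewrite it as $(\bm{f}-\bm{f}_h,\bm{z}_I)=(\bm{f}-\bm{\Pi}^0_0\bm{f},\bm{z}_I-\bm{\Pi}^0_0\bm{z}_I)$, where the second equality uses orthogonality of the $L^2$ projection onto constants. Applying~\eqref{proj0estimate} to both factors (with $\bm{f}\in \bm{H}^1(\Omega)$) gives a contribution $Ch^2\|\bm{f}\|_{\bm{H}^1(\Omega)}|\bm{z}|_{\bm{H}^2(\Omega)}$.

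The main obstacle is the third bracket, which is the variational crime coming from replacing $\bm{\varepsilon}$ by $\bm{\Pi}^0_\ell \bm{\varepsilon}$ in the bilinear form. Using that $\mathbb{C}$ is piecewise constant, I would exploit the $L^2$-orthogonality of $\bm{\Pi}^0_\ell$ against $\polysym$ to rewrite
\begin{align*}
a_h(\bm{u}_h,\bm{z}_I)-a(\bm{u}_h,\bm{z}_I) = \sum_E \int_E \bm{\varepsilon}(\bm{u}_h):\mathbb{C}:\bigl[\bm{\Pi}^0_\ell\bm{\varepsilon}(\bm{z}_I)-\bm{\varepsilon}(\bm{z}_I)\bigr]\,d\bm{x}.
\end{align*}
I would then add and subtract $\bm{\varepsilon}(\bm{u})$ in the first slot and $\bm{\Pi}^0_\ell\bm{\varepsilon}(\bm{u})$ in the second. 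The polynomial piece $\int_E \bm{\Pi}^0_\ell\bm{\varepsilon}(\bm{u}):\mathbb{C}:[\bm{\Pi}^0_\ell\bm{\varepsilon}(\bm{z}_I)-\bm{\varepsilon}(\bm{z}_I)]\,d\bm{x}$ vanishes by orthogonality, leaving two terms that are each controlled by~\eqref{strainestimate} (and the triangle inequality together with~\eqref{L2plusH1estimate} to handle $\|\bm{\Pi}^0_\ell\bm{\varepsilon}(\bm{z}_I)-\bm{\varepsilon}(\bm{z}_I)\|_{\bm{L}^2(E)}\leq Ch|\bm{z}|_{\bm{H}^2(E)}$), yielding the desired $h^2$ scaling. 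Collecting all contributions, applying the regularity bound on $\bm{z}$ and cancelling one factor of $\|\bm{u}-\bm{u}_h\|_{\bm{L}^2(\Omega)}$ gives the claim.
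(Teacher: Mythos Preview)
Your proposal is correct and follows essentially the same Aubin--Nitsche duality argument as the paper: the three-term splitting into the interpolation error $a(\bm{u}-\bm{u}_h,\bm{z}-\bm{z}_I)$, the force approximation $(\bm{f}-\bm{f}_h,\bm{z}_I)$, and the consistency term $a_h(\bm{u}_h,\bm{z}_I)-a(\bm{u}_h,\bm{z}_I)$ is identical, and your treatment of each piece (including the add/subtract of $\bm{\varepsilon}(\bm{u})$ and $\bm{\Pi}^0_\ell\bm{\varepsilon}(\bm{u})$ combined with the $L^2$-orthogonality of the projection) matches the paper's handling of its terms $A$--$F$. The only cosmetic differences are that the paper splits $\bm{\psi}_I$ rather than $\bm{z}_I$ in the force term and organizes the consistency term as a product of two projection errors before decomposing, whereas you pass to a single difference first; both routes use the same estimates~\eqref{L2plusH1estimate}--\eqref{proj0estimate} and the regularity bounds on the dual solution.
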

\begin{proof}
First, let $\bm{\psi}$ be a solution to the auxiliary problem: find $\bm{\psi} \in \bm{H}^2 \cap \bm{H}^1_0$ such that 
\begin{align}\label{aux_problem}
    a(\bm{\psi},\bm{v}) =(\bm{u}-\bm{u}_h,\bm{v}) \quad \forall \bm{v} \in \bm{H}^1_0.
\end{align}
Then $\bm{\psi}$ can be shown to satisfy the following 
inequalities~\cite{elasticdaveiga}:
\begin{subequations}
\begin{align}
    |\bm{\psi}|_{\bm{H}^2(\Omega)} &\leq C_1\|\bm{u}-\bm{u}_h\|_{\bm{L}^2(\Omega)}\label{psiestimate1}, \\
    \|\bm{\psi}\|_{\bm{H}^1_0(\Omega)} &\leq C_2 \|\bm{u}-\bm{u}_h\|_{\bm{L}^2(\Omega)}\label{psiestimate2}.
\end{align}
\end{subequations}
We estimate 
\begin{align*}
    \|\bm{u}-\bm{u}_h\|^2_{\bm{L}^2} &= (\bm{u}-\bm{u}_h,\bm{u}-\bm{u}_h) \\
    &= a(\bm{\psi},\bm{u}-\bm{u}_h) \\ 
    &= a(\bm{\psi}-\bm{\psi}_I+\bm{\psi}_I,\bm{u}-\bm{u}_h) \\
    &= {a(\bm{\psi}-\bm{\psi}_I,\bm{u}-\bm{u}_h)}+ {a(\bm{\psi}_I,\bm{u}-\bm{u}_h)},
\end{align*}
where $\bm{\psi}_I$ is the interpolation of $\bm{\psi}$. We now estimate each of the terms separately.   
For the second term, we write
\begin{align*}
    a(\bm{\psi}_I,\bm{u}-\bm{u}_h) &= a(\bm{\psi}_I,\bm{u}) - a(\bm{\psi}_I,\bm{u}_h) \\
    &= a(\bm{\psi}_I,\bm{u})-a_h(\bm{\psi}_I,\bm{u}_h)+a_h(\bm{\psi}_I,\bm{u}_h) - a(\bm{\psi}_I,\bm{u}_h) \\
    &= (\bm{f},\bm{\psi}_I) - (\bm{f}_h,\bm{\psi}_I) + a_h(\bm{\psi}_I,\bm{u}_h) -a(\bm{\psi}_I,\bm{u}_h) \\
    &= (\bm{f}-\bm{f}_h,\bm{\psi}_I) + \bigl(a_h(\bm{\psi}_I,\bm{u}_h)-a(\bm{\psi}_I,\bm{u}_h)\bigr). \\ 
\end{align*}
Then we have 
\begin{align}
    \|\bm{u}-\bm{u}_h\|^2_{\bm{L}^2} = \underbrace{{a(\bm{\psi}-\bm{\psi}_I,\bm{u}-\bm{u}_h)}}_{A} + \underbrace{(\bm{f}-\bm{f}_h,\bm{\psi}_I)}_{B} + \underbrace{\bigl(a_h(\bm{\psi}_I,\bm{u}_h)-a(\bm{\psi}_I,\bm{u}_h)\bigr)}_{C}.\label{estimate_u_h_L2}
\end{align}
We estimate each of the terms separately using  Cauchy-Schwarz,~\eqref{L2plusH1estimate},~\eqref{proj0estimate},
and~\eqref{h0estimate}. For term $A$ in~\eqref{estimate_u_h_L2}, 
we estimate 
\begin{align}
    {a(\bm{\psi}-\bm{\psi}_I,\bm{u}-\bm{u}_h)} &\leq  \|\bm{\psi}-\bm{\psi}_I\|_{\bm{H}^1_0(\Omega)} \|\bm{u}-\bm{u}_h\|_{\bm{H}^1_0(\Omega)} \notag\\
    &\leq Ch\|\bm{u}-\bm{u}_h\|_{\bm{H}^1_0(\Omega)}|\psi|_{\bm{H}^2(\Omega)} \notag\\
    &\leq Ch\|\bm{u}-\bm{u}_h\|_{\bm{H}^1_0(\Omega)}\|\bm{u}-\bm{u}_h\|_{\bm{L}^2(\Omega)}\notag \\
    &\leq Ch^2\|\bm{u}-\bm{u}_h\|_{\bm{L}^2(\Omega)}(|\bm{u}|_{\bm{H}^2(\Omega)}+\|\bm{f}\|_{\bm{L}^2(\Omega)}) \label{estimate_a}.
\end{align}
For term $B$ in~\eqref{estimate_u_h_L2}, we compute
\begin{align*}
    (\bm{f}-\bm{f}_h,\bm{\psi}_I) &= (\bm{f}-\bm{\Pi}_{0}^0\bm{f},\bm{\psi}_I) \\
    &= (\bm{f}-\bm{\Pi}_{0}^0\bm{f},\bm{\psi}_I-\bm{\psi}+\bm{\psi}) \\
    &= (\bm{f}-\bm{\Pi}_{0}^0\bm{f},\bm{\psi}_I-\bm{\psi}) + (\bm{f}-\bm{\Pi}_{0}^0\bm{f},\bm{\psi}) \\
    &= (\bm{f}-\bm{\Pi}_{0}^0\bm{f},\bm{\psi}_I-\bm{\psi}) + (\bm{f}-\bm{\Pi}_{0}^0\bm{f},\bm{\psi}-\bm{\Pi}^0_{0}{\bm{\psi}}) + (\bm{f}-\bm{\Pi}_{0}^0\bm{f},\bm{\Pi}^0_{0}{\bm{\psi}}).
\end{align*}
But by definition of $\bm{\Pi}_0^0\bm{f}$, we have $(\bm{f}-\bm{\Pi}_{0}^0\bm{f},\bm{\Pi}^0_{0}{\bm{\psi}})=0$, and hence
\begin{align}
    (\bm{f}-\bm{f}_h,\bm{\psi}_I) &= (\bm{f}-\bm{\Pi}_{0}^0\bm{f},\bm{\psi}_I-\bm{\psi}) + (\bm{f}-\bm{\Pi}_{0}^0\bm{f},\bm{\psi}-\bm{\Pi}^0_{0}{\bm{\psi}}) \notag \\
    &\leq \|\bm{f}-\bm{\Pi}_{0}^0\bm{f}\|_{\bm{L}^2(\Omega)} \|\bm{\psi}_I-\bm{\psi}\|_{\bm{L}^2(\Omega)} + \|\bm{f}-\bm{\Pi}_{0}^0\bm{f}\|_{\bm{L}^2(\Omega)} \|\bm{\psi}-\bm{\Pi}^0_{0}{\bm{\psi}}\|_{\bm{L}^2(\Omega)}\notag \\
    &\leq \|\bm{f}-\bm{\Pi}_{0}^0\bm{f}\|_{\bm{L}^2(\Omega)} (\|\bm{\psi}_I-\bm{\psi} \|_{\bm{L}^2(\Omega)} + \|\bm{\psi}-\bm{\Pi}^0_{0}{\bm{\psi}}\|_{\bm{L}^2(\Omega)}) \notag \\
    &\leq C_1h\|\bm{f}\|_{\bm{H}^1_0(\Omega)}(C_2h|\bm{\psi}|_{\bm{H}^2(\Omega)}+C_3h\|\bm{\psi}\|_{\bm{H}^1_0(\Omega)}) \notag \\
    &\leq Ch^2\|\bm{f}\|_{\bm{H}^1_0(\Omega)}\|\bm{u}-\bm{u}_h\|_{\bm{L}^2(\Omega)}\label{estimate_b}.
\end{align}
For term $C$ in~\eqref{estimate_u_h_L2}, we first apply the definition of the $L^2$ projection to rewrite it as:

\begin{align*}
    a_h(\bm{\psi}_I,\bm{u}_h)-a(\bm{\psi}_I,\bm{u}_h) &= \sum_{E}{\int_{E}{\bigl[ \projLtwo{\bm{\psi}_I}:\mathbb{C}:\projLtwo{\bm{u}_h}-\bm{\varepsilon}(\bm{\psi}_I):\mathbb{C}:\bm{\varepsilon}(\bm{u}_h)}} \bigl] \,d\bm{x} \\ 
    &= \sum_{E}{\int_{E}{\bigl[ {\bm{\varepsilon}(\bm{\psi}_I)}:\mathbb{C}:\projLtwo{\bm{u}_h}-\bm{\varepsilon}(\bm{\psi}_I):\mathbb{C}:\bm{\varepsilon}(\bm{u}_h)} \bigr]\,d\bm{x}} \\
    &= \sum_{E}{\int_{E}{\bm{\varepsilon}(\bm{\psi}_I):\mathbb{C}:(\projLtwo{\bm{u}_h}-\bm{\varepsilon}(\bm{u}_h))}\,d\bm{x}}.
\end{align*}
Now, add and subtract $\projLtwo{\bm{\psi}_I}$ and apply the definition of $\projLtwo{\bm{u}_h}$ to simplify:

\begin{align*}
     a_h(\bm{\psi}_I,\bm{u}_h)-a(\bm{\psi}_I,\bm{u}_h) &=  \sum_{E}{\left[\int_{E}{(\bm{\varepsilon}(\bm{\psi}_I)-\projLtwo{\bm{\psi}_I}):\mathbb{C}:(\projLtwo{\bm{u}_h}-\bm{\varepsilon}(\bm{u}_h))}\, d\bm{x}\right.} \\
     &\qquad \quad \ +\left. \int_{E}{{\projLtwo{\bm{\psi}_I}}:\mathbb{C}:(\projLtwo{\bm{u}_h}-\bm{\varepsilon}(\bm{u}_h))}\, d\bm{x}\right] \\
     &= \sum_{E}{\int_{E}{(\bm{\varepsilon}(\bm{\psi}_I)-\projLtwo{\bm{\psi}_I}):\mathbb{C}:(\projLtwo{\bm{u}_h}-\bm{\varepsilon}(\bm{u}_h))}\, d\bm{x}}.
\end{align*}
Adding and subtracting terms $\projLtwo{\bm{u}}$ and $\bm{\varepsilon}(\bm{u})$, we obtain
\begin{align*}
 &\sum_{E}{\int_{E}{(\bm{\varepsilon}(\bm{\psi}_I)-\projLtwo{\bm{\psi}_I}):\mathbb{C}:(\projLtwo{\bm{u}_h}-\bm{\varepsilon}(\bm{u}_h))}\, d\bm{x}} \\
  \begin{split}
  &=\sum_{E}{\Biggl[\underbrace{ \int_{E}{(\bm{\varepsilon}(\bm{\psi}_I)-\projLtwo{\bm{\psi}_I}):\mathbb{C}:(\projLtwo{\bm{u}_h}-\projLtwo{\bm{u}}})\,d\bm{x}  }_{D} }  \\
   &\qquad \quad \ + \underbrace{\int_{E}{(\bm{\varepsilon}(\bm{\psi}_I)-\projLtwo{\bm{\psi}_I}):\mathbb{C}:(\projLtwo{\bm{u}}-\bm{\varepsilon}(\bm{u})})\,d\bm{x}}_{E} \\
   &\qquad  \quad \ +\underbrace{\int_{E}{(\bm{\varepsilon}(\bm{\psi}_I)-\projLtwo{\bm{\psi}_I}):\mathbb{C}:(\bm{\varepsilon}(\bm{u})-\bm{\varepsilon}(\bm{u}_h)})\,d\bm{x} }_{F}\Biggr].
   \end{split}
\end{align*}

We estimate the three terms separately. For term $D$, we apply the Cauchy-Schwarz inequality and a standard estimate of the $L^2$ projection to write 
\begin{equation}\label{estimate_d}
 \begin{split}
 \sum_{E}{\int_{E}{(\bm{\varepsilon}(\bm{\psi}_I)-\projLtwo{\bm{\psi}_I}):\mathbb{C}:(\projLtwo{\bm{u}_h}-\projLtwo{\bm{u}}})\,d\bm{x}} \\ \leq C_1\|\bm{\varepsilon}(\bm{\psi}_I)-\bm{\Pi}_{\ell}^0\bm{\varepsilon}({\bm{\psi}_I)}\|_{\bm{L}^2(\Omega)} \|\bm{u}-\bm{u}_h\|_{\bm{H}^1_0(\Omega)}.
 \end{split}
\end{equation}
For term $E$, we again apply Cauchy-Schwarz and~\eqref{strainestimate} to write
\begin{equation}\label{estimate_e}
\begin{split}
     \sum_{E}{\int_{E}{(\bm{\varepsilon}(\bm{\psi}_I)-\projLtwo{\bm{\psi}_I}):\mathbb{C}:(\projLtwo{\bm{u}}-\bm{\varepsilon}(\bm{u}))}\,d\bm{x}} \\ \leq C_2h\|\bm{\varepsilon}(\bm{\psi}_I)-\bm{\Pi}_{\ell}^0\bm{\varepsilon}({\bm{\psi}_I})\|_{\bm{L}^2(\Omega)}|\bm{u}|_{\bm{H}^2(\Omega)}.
\end{split}
\end{equation}
Similarly for term $F$, we estimate 
\begin{equation}\label{estimate_f}
    \begin{split}
    \sum_{E}{\int_{E}{(\bm{\varepsilon}(\bm{\psi}_I)-\projLtwo{\bm{\psi}_I}):\mathbb{C}:(\bm{\varepsilon}(\bm{u})-\bm{\varepsilon}(\bm{u}_h)})\,d\bm{x}} \\ \leq C_3\|\bm{\varepsilon}(\bm{\psi}_I)-\bm{\Pi}_{\ell}^0\bm{\varepsilon}({\bm{\psi}_I})\|_{\bm{L}^2(\Omega)} \|\bm{u}-\bm{u}_h\|_{\bm{H}^1_0(\Omega)}.
    \end{split}
\end{equation}
Now combining~\eqref{estimate_d},~\eqref{estimate_e},~\eqref{estimate_f} and using~\eqref{strainestimate} and~\eqref{psiestimate1}, we obtain the estimate
\begin{align}
    a_h(\bm{\psi}_I,\bm{u}_h)-a(\bm{\psi}_I,\bm{u}_h) \leq Ch^2\|\bm{u}-\bm{u}_h\|_{\bm{L}^2(\Omega)}(|\bm{u}|_{\bm{H}^2(\Omega)}+\|\bm{f}\|_{\bm{L}^2(\Omega)}).\label{estimate_c}
\end{align}
Combining all the necessary terms from~\eqref{estimate_a},~\eqref{estimate_b},~\eqref{estimate_c},
the estimate becomes
\begin{align*}
    \|\bm{u}-\bm{u}_h\|_{\bm{L}^2(\Omega)} &\leq Ch^2(|\bm{u}|_{\bm{H}^2(\Omega)}+ \|\bm{f}\|_{\bm{L}^2(\Omega)} +\|\bm{f}\|_{\bm{H}^1_0(\Omega)}) \\ 
    &\leq Ch^2(|\bm{u}|_{\bm{H}^2(\Omega)}+ \|\bm{f}\|_{\bm{H}^1(\Omega)}).  \tag*{\qedhere}
\end{align*}
\end{proof}

\section{Numerical Results}\label{sec:numerical}
We present a series of numerical examples showing the application of the method to well-known benchmark problems in plane elasticity. We examine the errors using the $L^\infty$ and $L^2$ norms, as well as the energy seminorm, and compare the convergence rates of the method with the theoretical estimates. In particular, we use the following discrete measures:
\begin{subequations}
\begin{align}
    \|\bm{u} -\bm{u}_h\|_{\bm{L}^\infty(\Omega)} &= \max_{\bm{x} \in \Omega}{|\bm{u}(\bm{x}) - \bm{u}_h(\bm{x})|}, \\
    \|\bm{u} -\bm{u}_h \|_{\bm{L}^2(\Omega)} &= \sqrt{\sum_{E}{\int_{E}{|\bm{u}-\proj{\bm{u}_h}}|^2 \,d\bm{x}}},  \\
    \|\bm{u} -\bm{u}_h\|_{a} &= \sqrt{\sum_{E}{\int_{E}{(\overline{\bm{\varepsilon}}-\overline{\projLtwo{\bm{u}_h}})^T \bm{C}{(\overline{\bm{\varepsilon}}-\overline{\projLtwo{\bm{u}_h}}) }\,d\bm{x}}}}.
\end{align}
\end{subequations}
To compute the integrals for the $L^2$ norm and the energy seminorm, we use the scaled boundary cubature (SBC) scheme~\cite{2021}. The SBC scheme allows us to convert integration over arbitrary polygons into an equivalent integration over the unit square. In particular, for a polygonal element $E$ and a scalar function $f$, we expand the integral over $E$ to write
\begin{align}
    \int_{E}{f \,d\bm{x}} = \sum_{i=1}^{N_E}\ell_i \,
     \acmajor{|e_i|} 
    \int_{0}^{1}\int_{0}^1{\xi f(\varphi(\xi,t))\, d\xi dt},
\end{align}
where $\ell_i$ is \acmajor{the} 
signed distance from a fixed point to the $i$-th edge, 
\acmajor{$|e_i|$ is the length of the $i$-th edge,}
and $\varphi$ is called the 
SB-parametrization~\cite{2021}. To compute the integrals over the square, we use a tensor-product Gauss quadrature rule.

\subsection{Patch test}
We first consider the displacement patch test. Let $\Omega = (0,1)^2$, and we impose an affine displacement field on the boundary:
\begin{equation*}
    u(\bm{x}) = x \ \ \textrm{and} \  \
    v(\bm{x}) = x+y \quad \textrm{on } \partial \Omega .
\end{equation*}
The exact solution is the extension of the boundary conditions onto the entire domain $\Omega$. We assess the accuracy of the numerical solution for three different types of meshes with 16 elements in each case. The first is a uniform square mesh, the second is a random Voronoi mesh, and the third is a Voronoi mesh that is obtained after applying three Lloyd iterations (see Fig.\ref{fig:three graphs}).
The results are listed in Table~\ref{tab:modifiedpatchtable}, which show that near machine-precision accuracy is realized. 
This indicates that the method passes the linear displacement patch test.
\begin{figure}
     \centering
     \begin{subfigure}{0.32\textwidth}
         \centering
         \includegraphics[width=\textwidth]{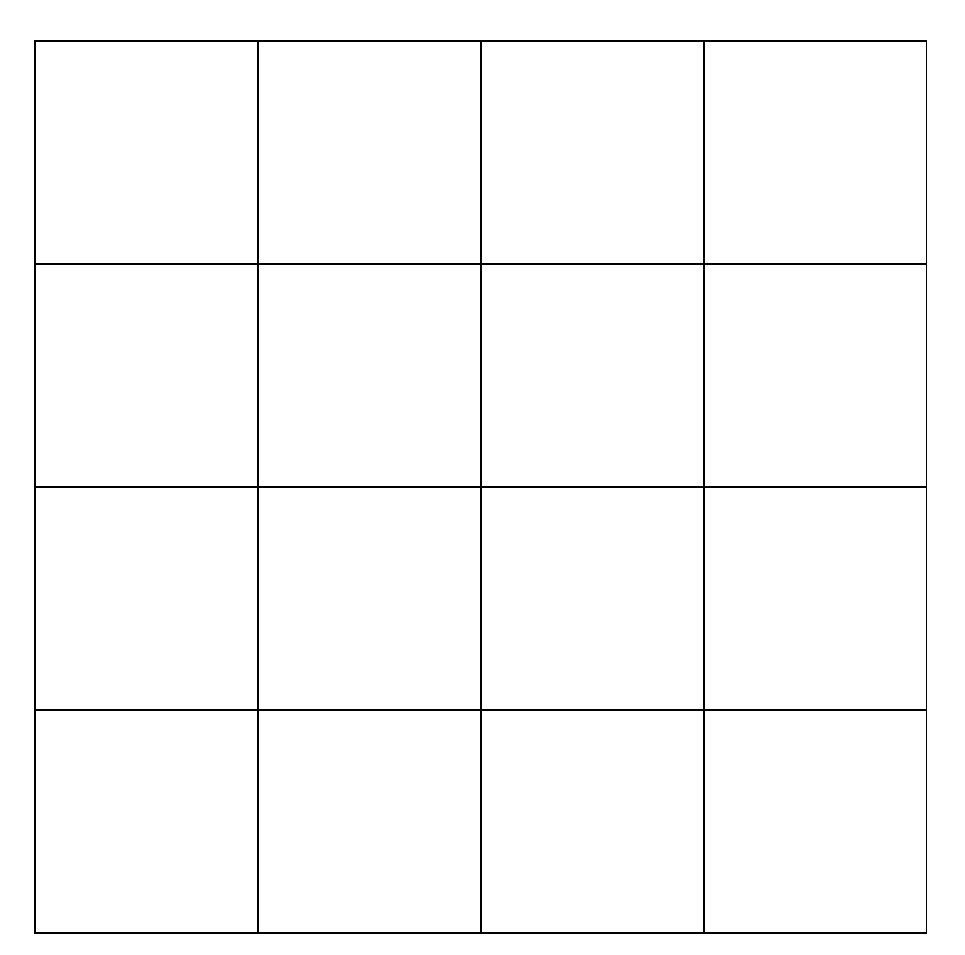}
         \caption{Uniform mesh}
         \label{fig:y equals x}
     \end{subfigure}
     \hfill
     \begin{subfigure}{0.32\textwidth}
         \centering
         \includegraphics[width=\textwidth]{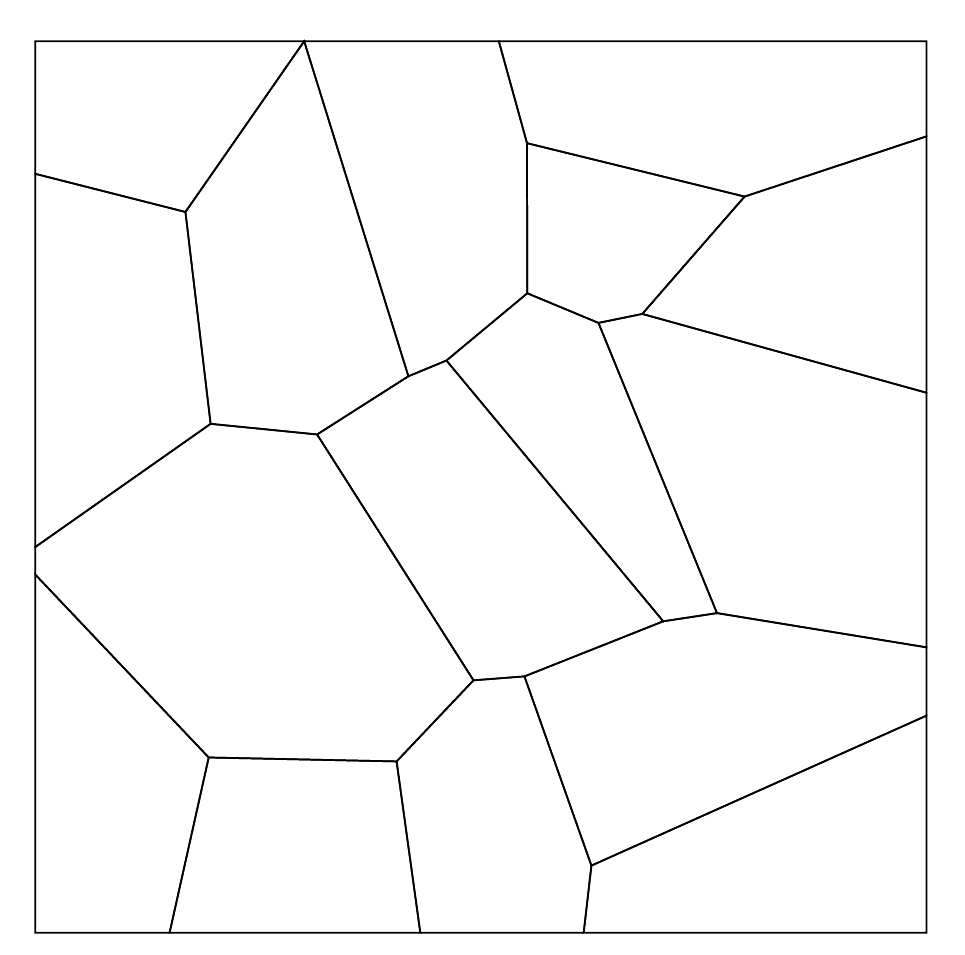}
         \caption{Random mesh}
         \label{fig:three sin x}
     \end{subfigure}
     \hfill
     \begin{subfigure}{0.32\textwidth}
         \centering
         \includegraphics[width=\textwidth]{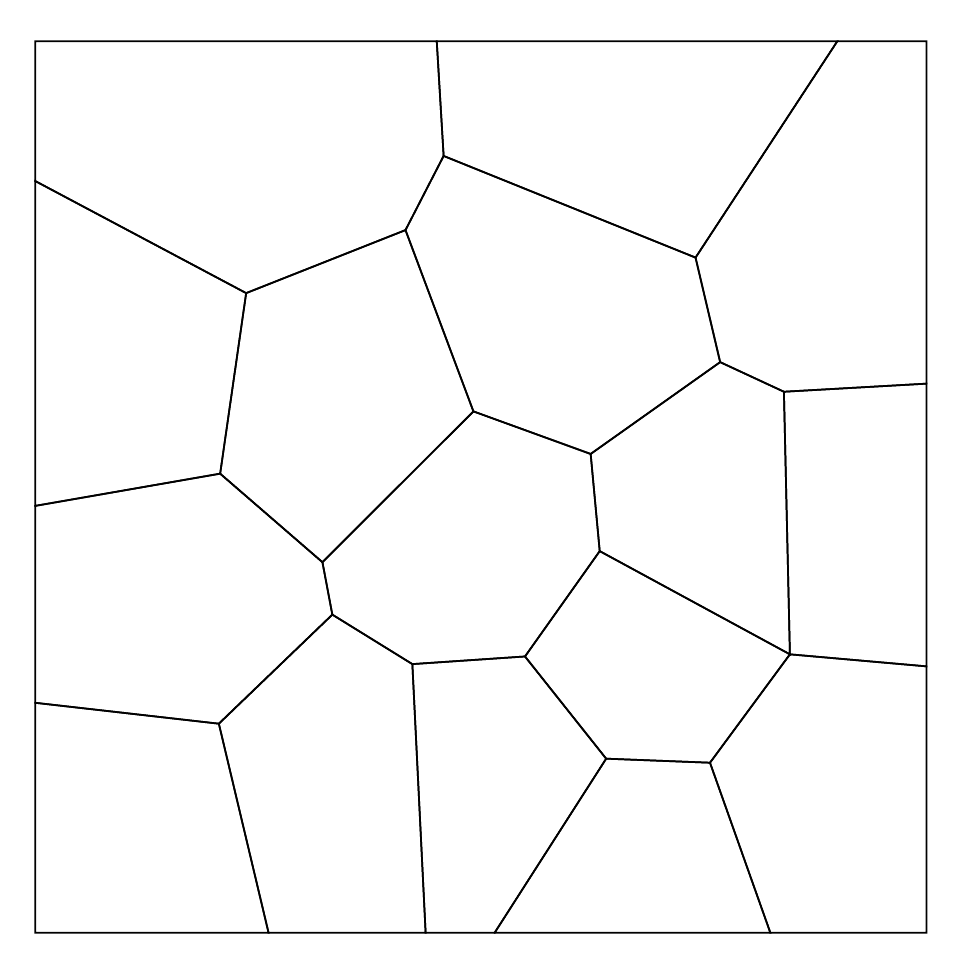}
         \caption{Lloyd iterated}
         \label{fig:five over x}
     \end{subfigure}
        \caption{Sample meshes used for the patch test.}
        \label{fig:three graphs}
\end{figure}
\begin{table}
    \centering
    \begin{tabular}{ |c| c | c |c| }
    \hline
     Mesh type & $L^\infty$ error 
     & $L^2$ error & Energy error \\ [0.5ex] 
    \hline
        Uniform & $3 \times10^{-16}$  & $2 \times10^{-16}$ &  $1 \times10^{-15}$
\\
        Random &$2 \times10^{-13}$&  $5 \times10^{-14}$ & $9 \times10^{-13}$ \\
        Lloyd iterated & $3 \times10^{-14}$ & $8 \times10^{-15}$ & $2 \times10^{-13}$\\
    \hline
    \end{tabular}
    \caption{Errors in the patch test on different types of meshes.}
    \label{tab:modifiedpatchtable}
\end{table}

\subsection{Eigenvalue analysis}
Consider the closed domain (unit square), $\bar{\Omega} = [0,1]^2$, which is
discretized using nine quadrilateral elements.  We are interested in the validity of the bounds in~\eqref{l_condition}. To this end, we solve the
element-eigenvalue problem, $\bm{K}_E \bm{d}_E = \lambda \bm{d}_E$,
to assess the physical and nonphysical (spurious) modes of the element. Each
element has three rigid-body (zero-energy) modes that each
correspond to a vanishing eigenvalue ($\lambda = 0$). For a stable
element, all other eigenvalues must be positive and bounded away from zero. We choose $\ell =0 ,1 ,2 , 3$ and measure 
the maximum number of spurious eigenvalues of the element
stiffness matrix as we artificially increase the number of 
nodes of the central element.  For a well-posed
discrete problem, the number of spurious eigenvalues should 
remain at zero. We show a few sample meshes in 
Figure~\ref{fig:eig_mesh}. In Figure~\ref{fig:eig_plots}, the resulting number of spurious eigenvalues as a function of the number of nodes of an element are plotted for $\ell = 0,1,2,3$. 

We find that for $\ell=0$, any polygon that is not a triangle ($N_E \ge 4 $) has
spurious modes, whereas for $\ell=1$, an element with $N_E \ge 6$ has
spurious modes. For $\ell=2$ and $\ell=3$, spurious eigenvalues appear for $N_E \ge 9$ and $N_E \ge 11$ in the central quadrilateral element, respectively.
This shows that~\eqref{eq:strong_l_condition} is sufficient but not strictly required to ensure that the element stiffness matrix has the correct rank and is devoid of nonphysical zero-energy modes.

\begin{figure}
     \centering
     \begin{subfigure}{.32\textwidth}
         \centering
         \includegraphics[width=\textwidth]{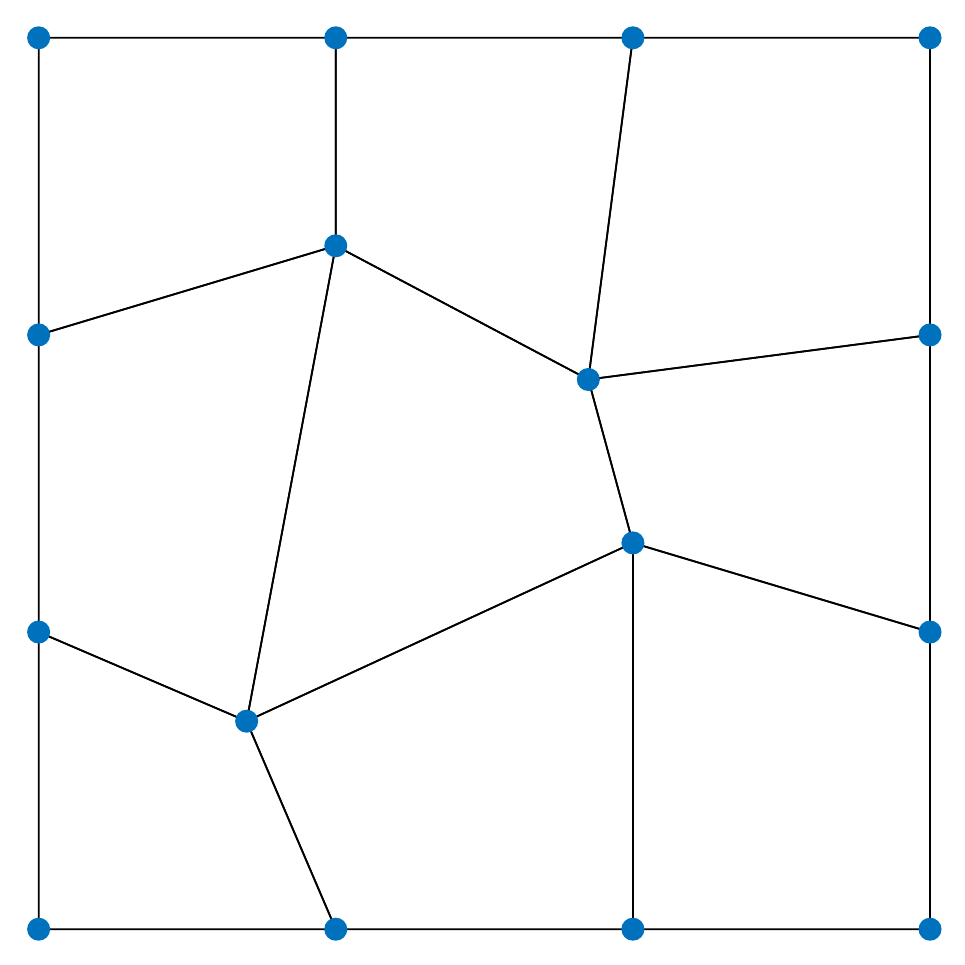}
         \caption{}
     \end{subfigure}
     \hfill
     \begin{subfigure}{.32\textwidth}
         \centering
         \includegraphics[width=\textwidth]{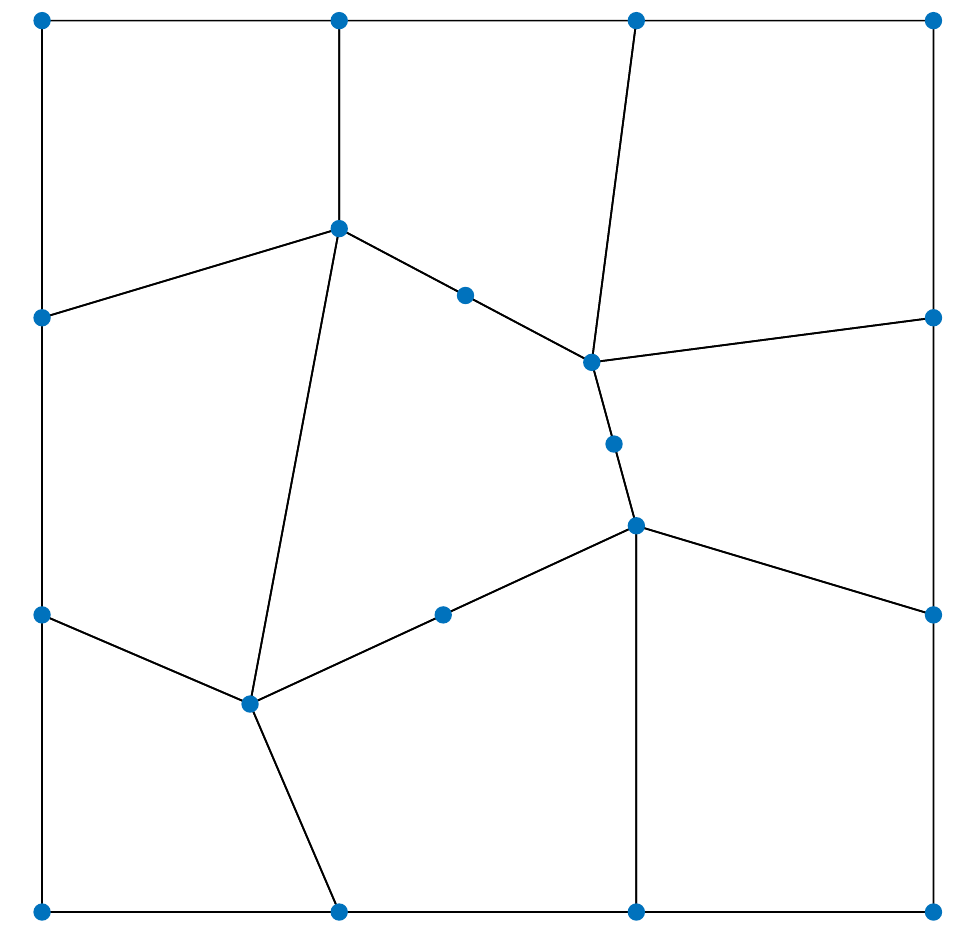}
         \caption{}
     \end{subfigure}
     \hfill
     \begin{subfigure}{.32\textwidth}
         \centering
         \includegraphics[width=\textwidth]{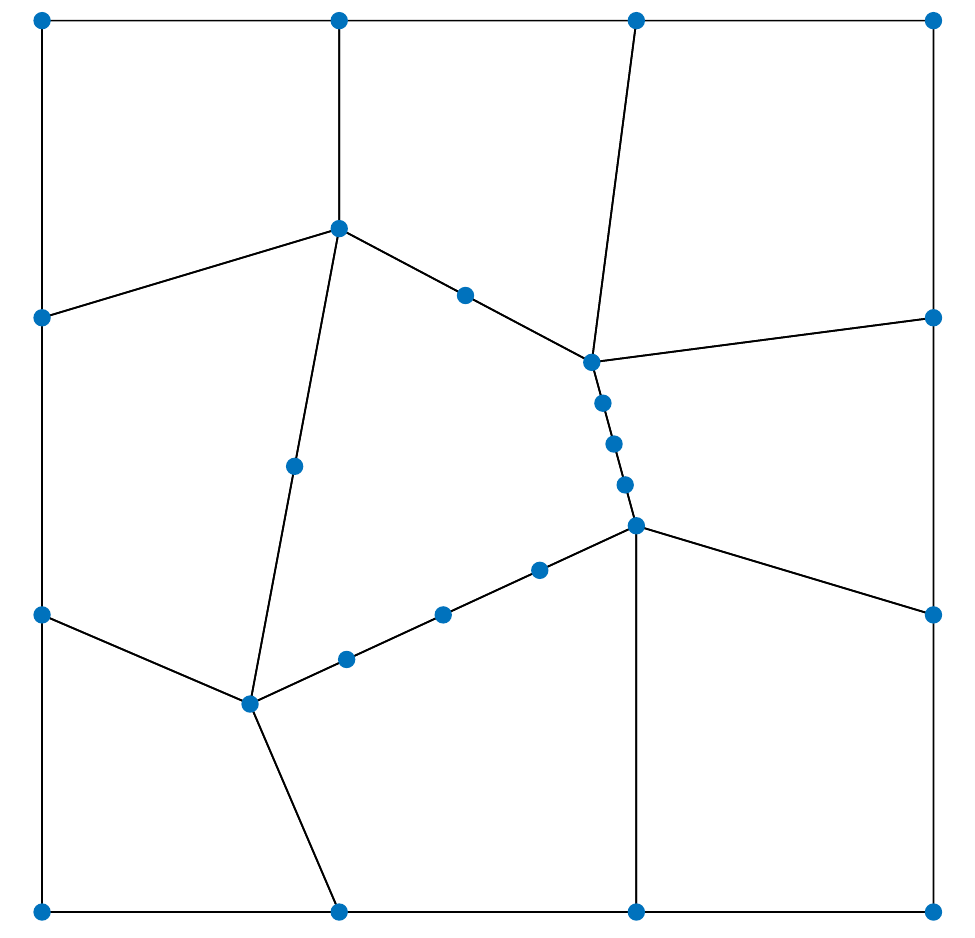}
         \caption{}
     \end{subfigure}
        \caption{ Sample meshes used in the element-eigenvalue analysis
        for $\ell = 0,1,2,3$. The central quadrilateral element has (a) 4 nodes, (b) 7 nodes, and (c) 12 nodes.}
        \label{fig:eig_mesh}
\end{figure}
\begin{figure}
     \centering
     \begin{subfigure}{.49\textwidth}
         \centering
         \includegraphics[width=\textwidth]{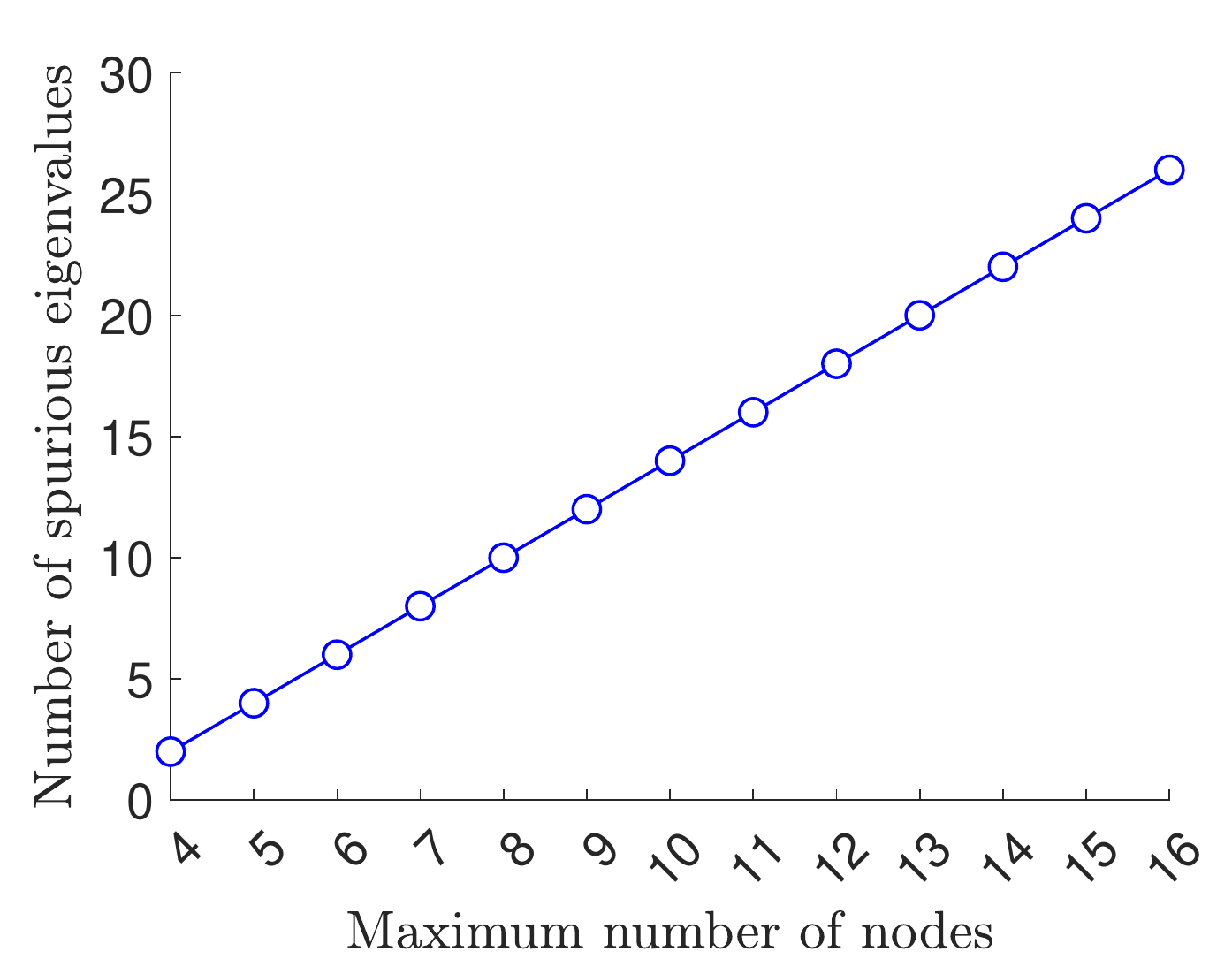}
         \caption{}
     \end{subfigure}
     \hfill
     \begin{subfigure}{.49\textwidth}
         \centering
         \includegraphics[width=\textwidth]{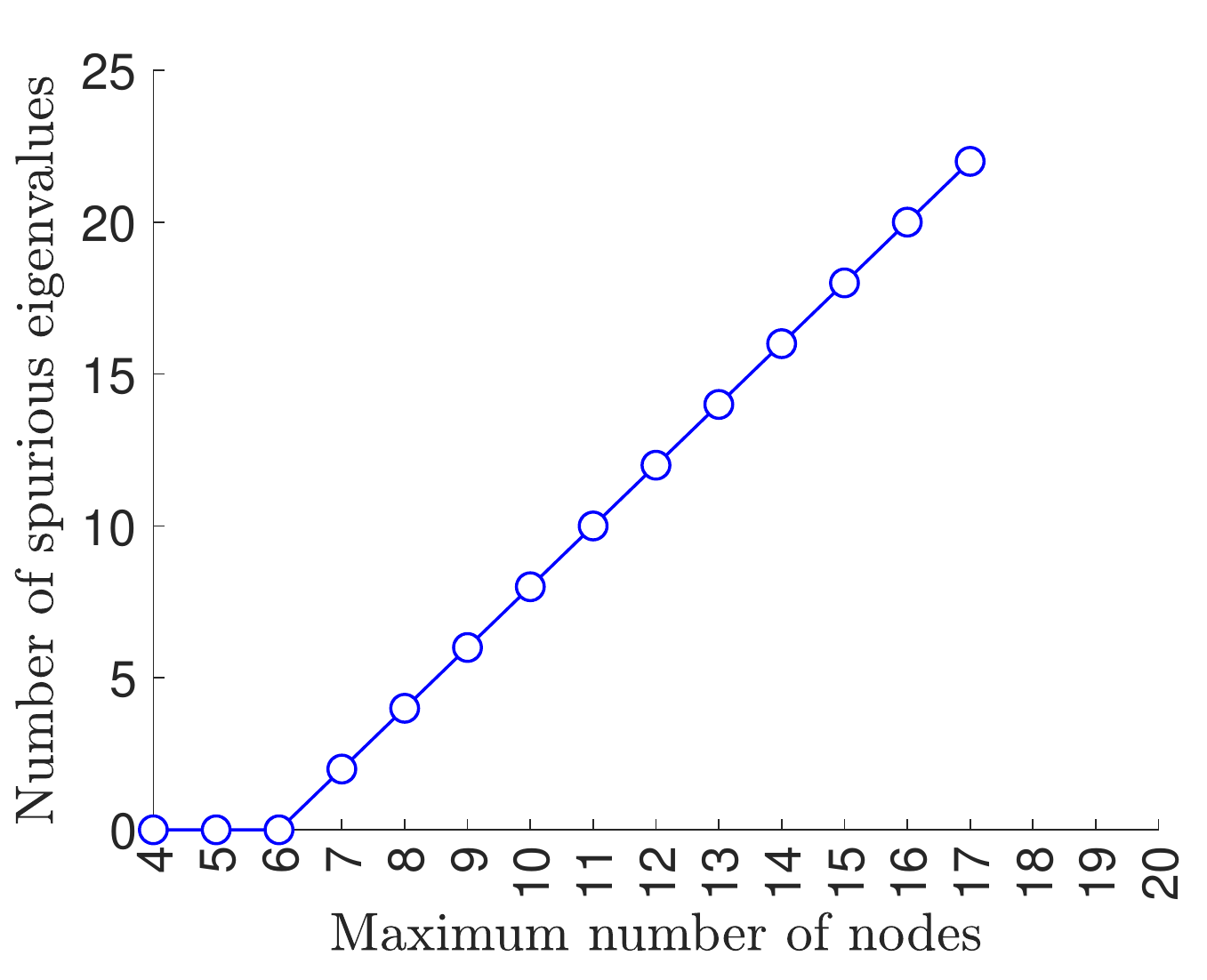}
         \caption{}
     \end{subfigure}
     \vfill
     \begin{subfigure}{.49\textwidth}
         \centering
         \includegraphics[width=\textwidth]{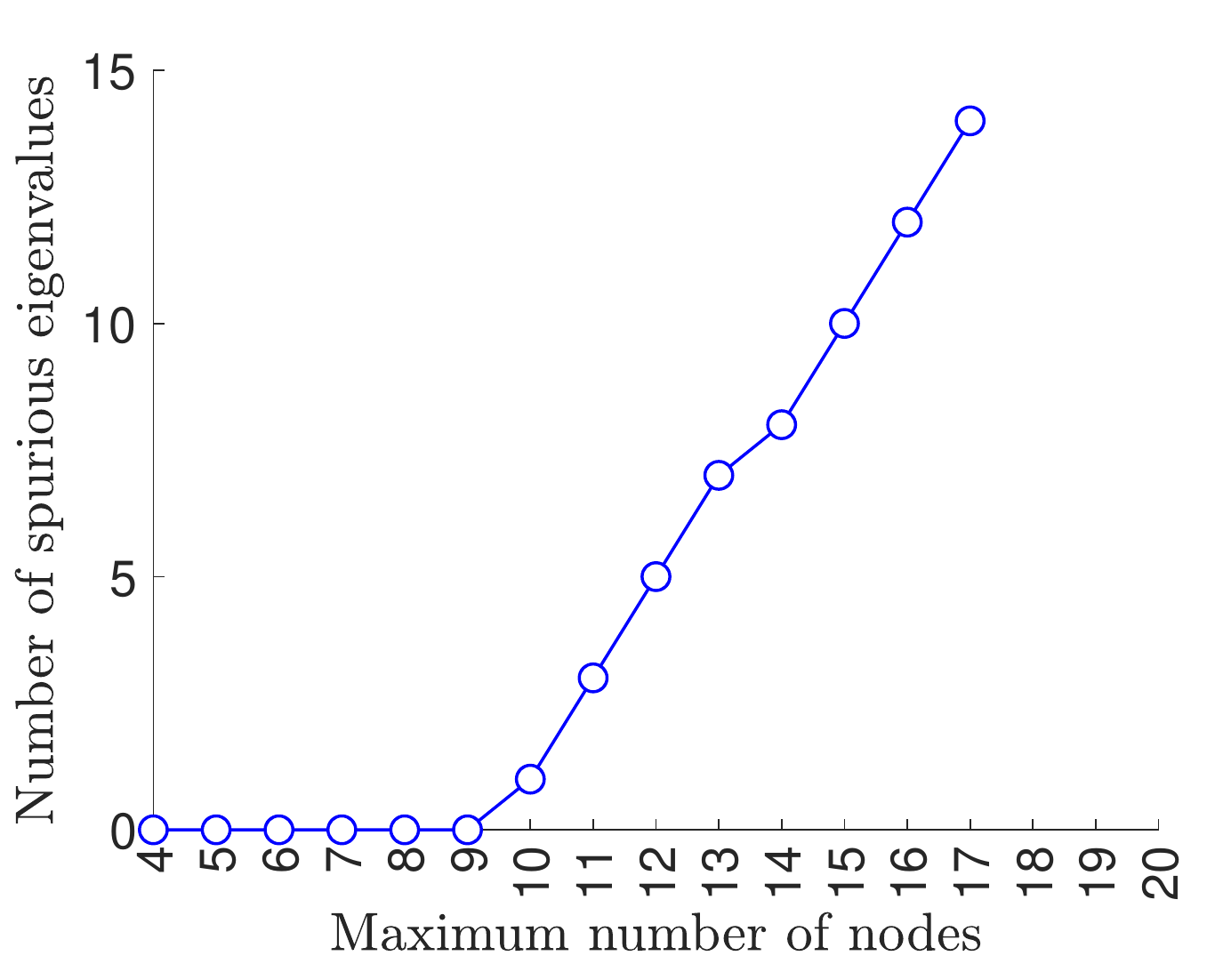}
         \caption{}
     \end{subfigure}
    \hfill     
    \begin{subfigure}{.49\textwidth}
         \centering
         \includegraphics[width=\textwidth]{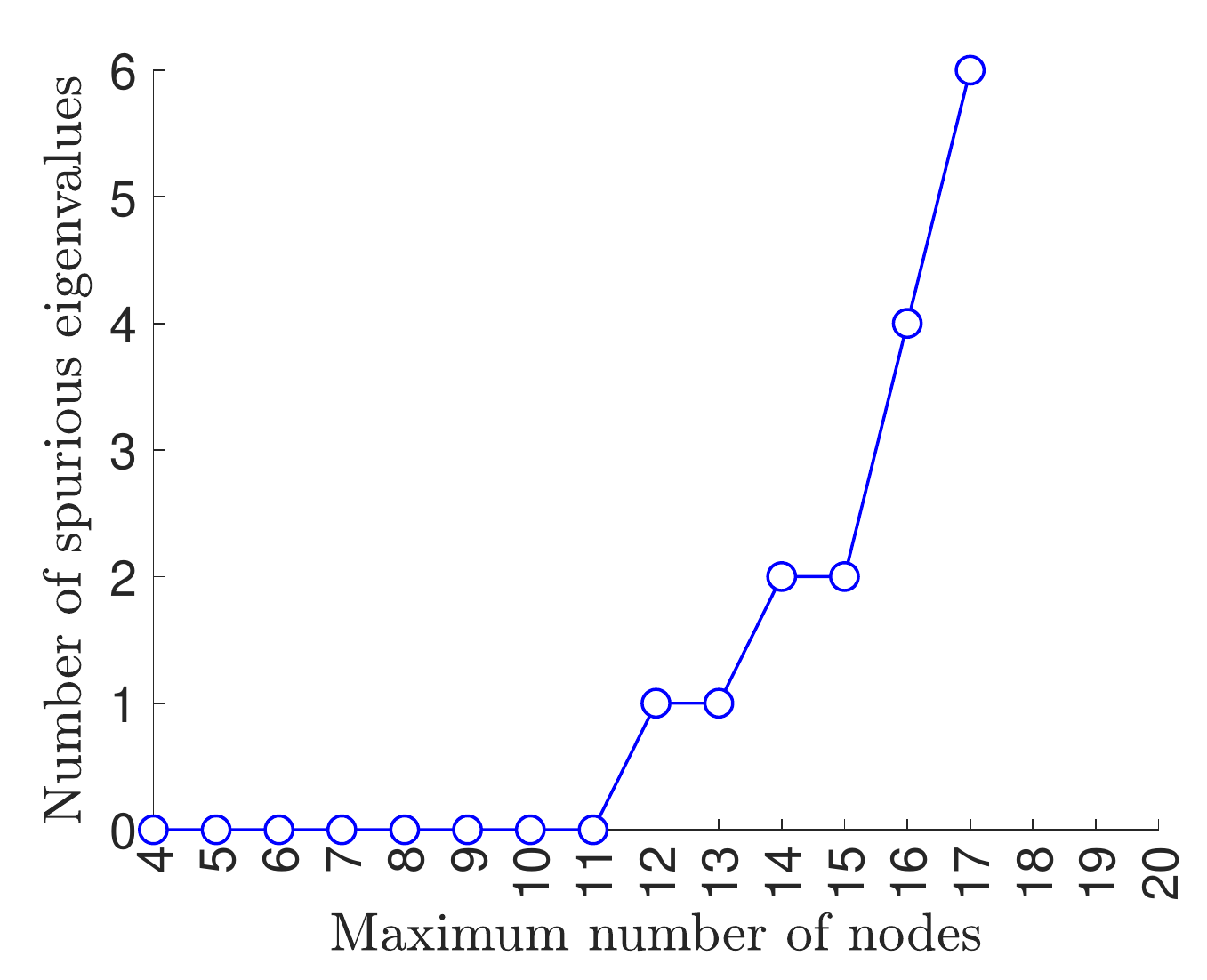}
         \caption{}
     \end{subfigure}
        \caption{Results of the element-eigenvalue analysis for 
        (a) $\ell=0$ , (b) $\ell=1$, (c) $\ell=2$, and (d) 
        $\ell=3$.}
        \label{fig:eig_plots}
\end{figure}
\acmajor{To further test the bound~\eqref{eq:strong_l_condition}, we examine the eigenvalues of the element stiffness matrix over a series of regular polygons (A. Russo, personal communication, April 2022). 
A few sample regular polygons are shown in Figure~\ref{fig:eig_mesh_poly}. In Figure~\ref{fig:eig_plots_poly}, 
we plot the number of spurious eigenvalues as a function of the 
number of nodes of a regular polygon. 
We again find that $\ell=0$ has spurious modes for all regular polygons $N_E\geq4$, and for $\ell=1$, regular polygons with $N_E \geq 5$ have spurious modes. For $\ell=2$ and $\ell=3$, there are additional eigenvalues that appear for $N_E\geq 7$ and $N_E\geq 9$, respectively. This shows that the inequality in~\eqref{eq:strong_l_condition} is strict for regular polygons.} 
\begin{figure}
     \centering
     \begin{subfigure}{.32\textwidth}
         \centering
         \includegraphics[width=\textwidth]{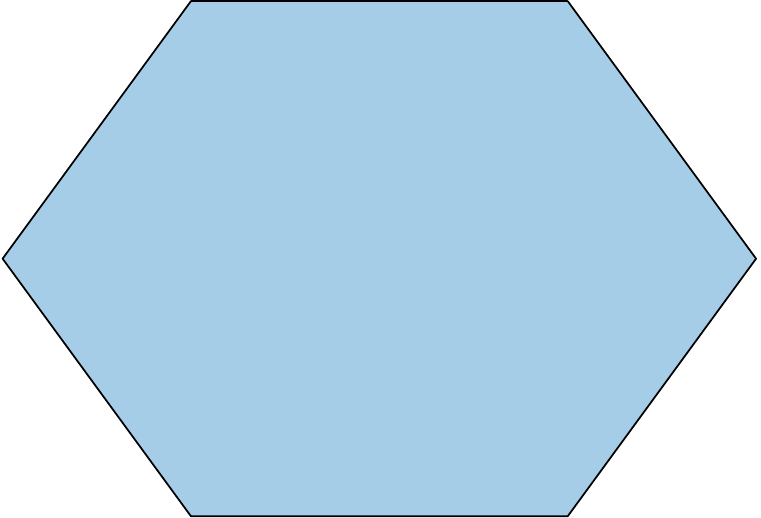}
         \caption{}
     \end{subfigure}
     \hfill
     \begin{subfigure}{.32\textwidth}
         \centering
         \includegraphics[width=\textwidth]{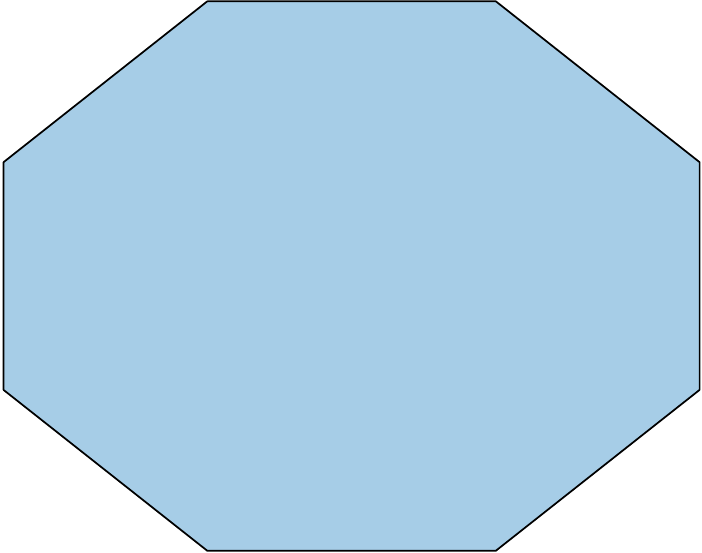}
         \caption{}
     \end{subfigure}
     \hfill
     \begin{subfigure}{.32\textwidth}
         \centering
         \includegraphics[width=\textwidth]{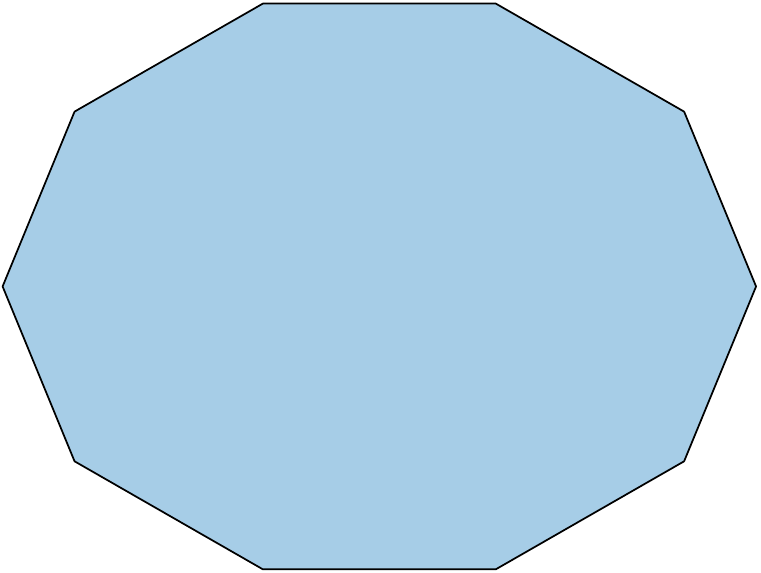}
         \caption{}
     \end{subfigure}
        \caption{ Sample regular polygons used in the element-eigenvalue analysis
        for $\ell = 0,1,2,3$.}
        \label{fig:eig_mesh_poly}
\end{figure}
\begin{figure}[!htb]
     \centering
     \begin{subfigure}{.49\textwidth}
         \centering
         \includegraphics[width=\textwidth]{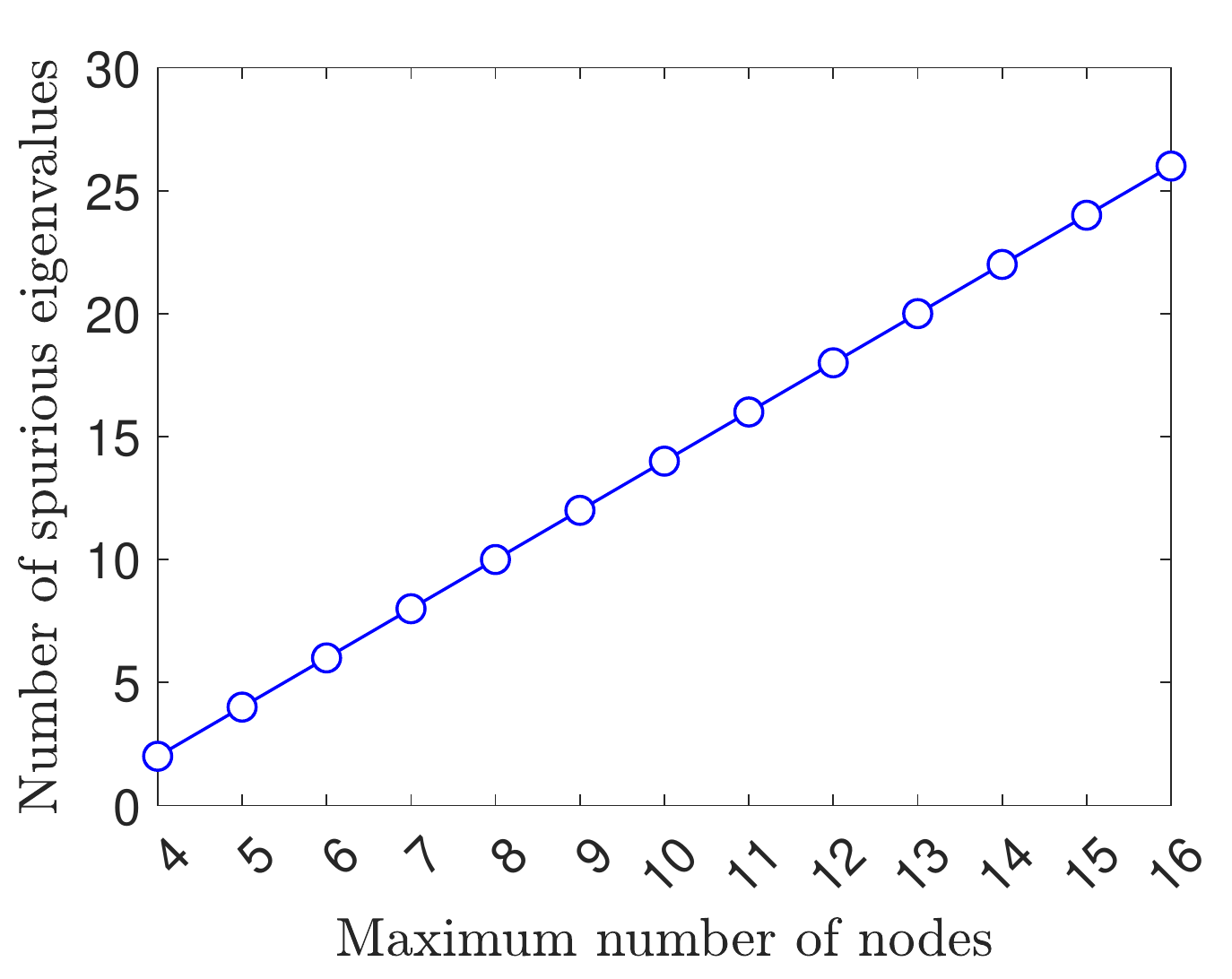}
         \caption{}
     \end{subfigure}
     \hfill
     \begin{subfigure}{.49\textwidth}
         \centering
         \includegraphics[width=\textwidth]{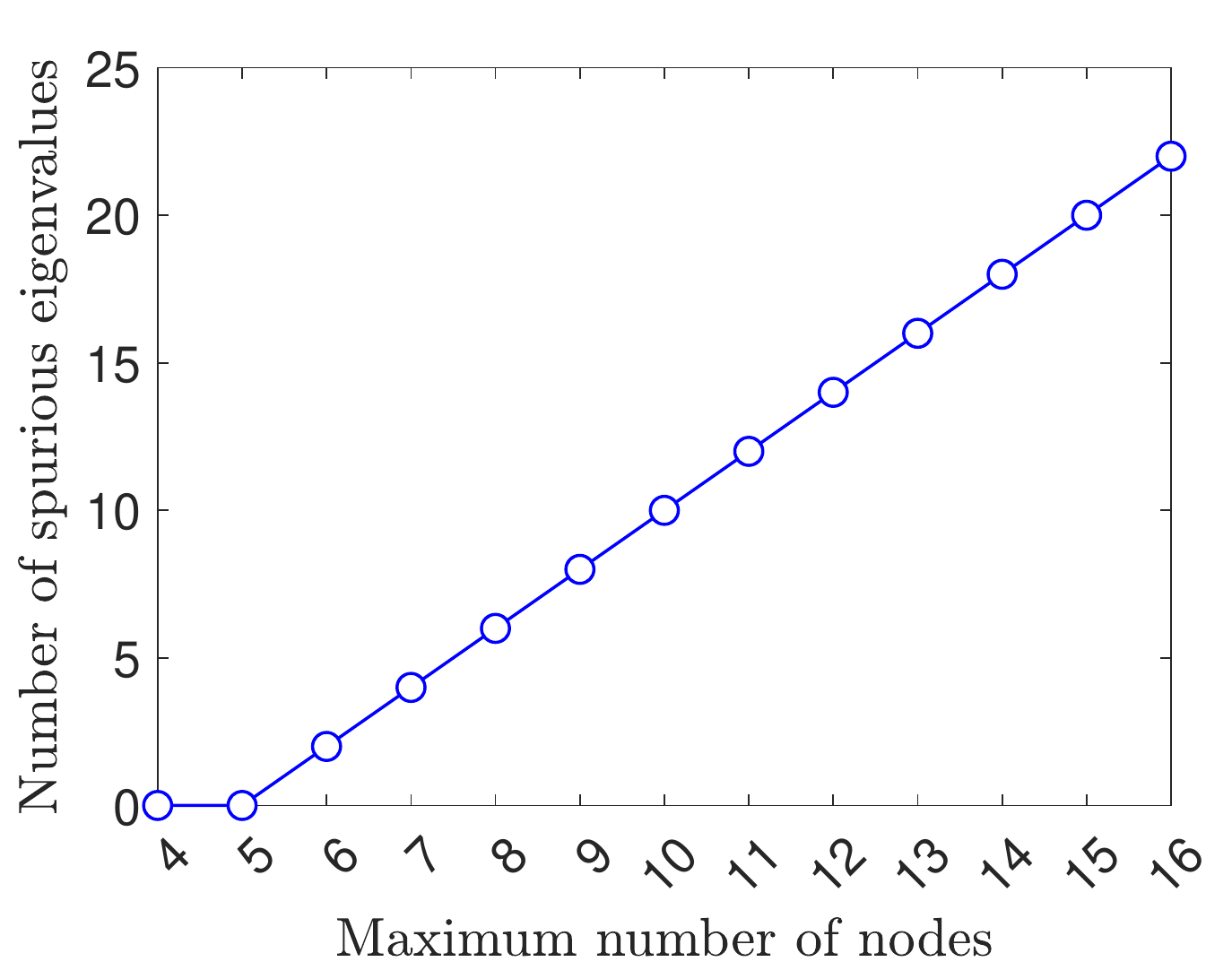}
         \caption{}
     \end{subfigure}
     \vfill
     \begin{subfigure}{.49\textwidth}
         \centering
         \includegraphics[width=\textwidth]{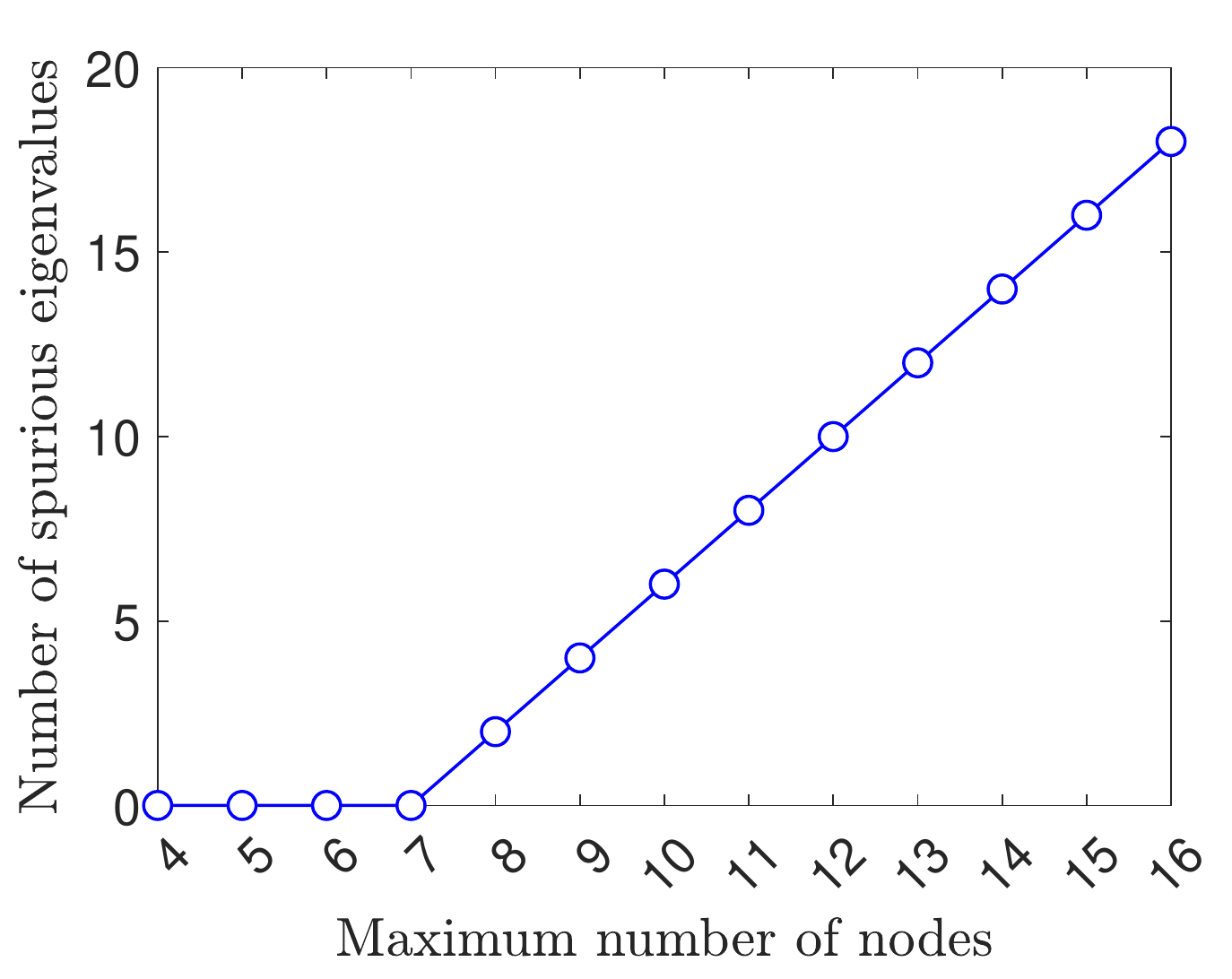}
         \caption{}
     \end{subfigure}
    \hfill     
    \begin{subfigure}{.49\textwidth}
         \centering
         \includegraphics[width=\textwidth]{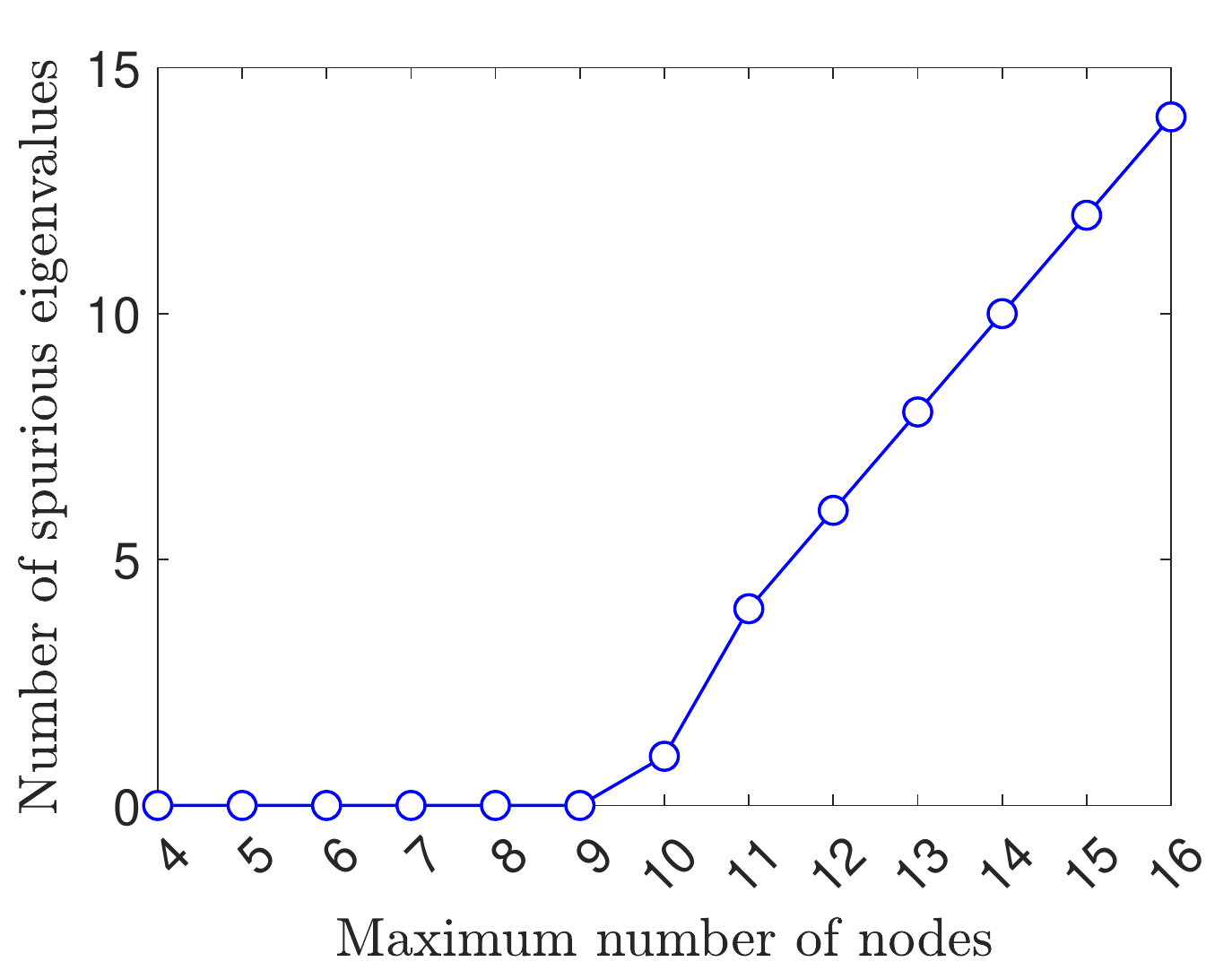}
         \caption{}
     \end{subfigure}
        \caption{Results of the regular polygon element-eigenvalue analysis for 
        (a) $\ell=0$ , (b) $\ell=1$, (c) $\ell=2$, and (d) 
        $\ell=3$.}
        \label{fig:eig_plots_poly}
\end{figure}
\subsection{Cantilever beam}
We now consider the problem of a cantilever beam, subjected to a shear end load~\cite{timoshenko1951theory}. In particular we consider the problem with material properties  $\acmajor{E_Y}=2\times 10^5$ psi and $\nu =0.3$, with plane stress assumptions. The beam has length $L=8$ inch, height $D=1$ inch and unit thickness. We apply a constant load $P=-1000$ psi on the right boundary. We test this problem on 
Lloyd iterated Voronoi meshes~\cite{talischi2012polymesher}. In 
Figure~\ref{fig:beammesh}, we show a few representative meshes.
\begin{figure}[!htb]
     \centering
     \begin{subfigure}{\textwidth}
         \centering
         \includegraphics[width=\textwidth]{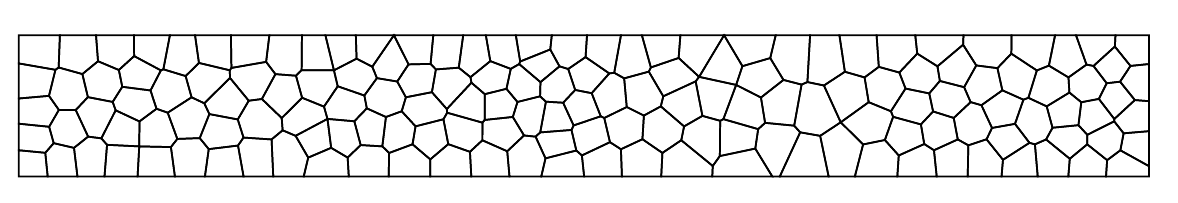}
         \caption{}
     \end{subfigure}
     \vfill
     \begin{subfigure}{\textwidth}
         \centering
         \includegraphics[width=\textwidth]{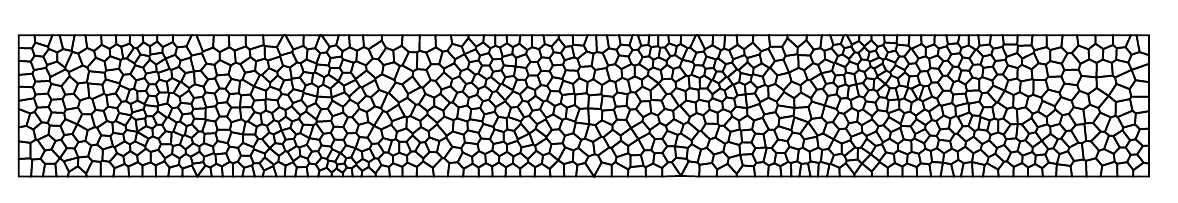}
         \caption{}
     \end{subfigure}
     \vfill
     \begin{subfigure}{\textwidth}
         \centering
         \includegraphics[width=\textwidth]{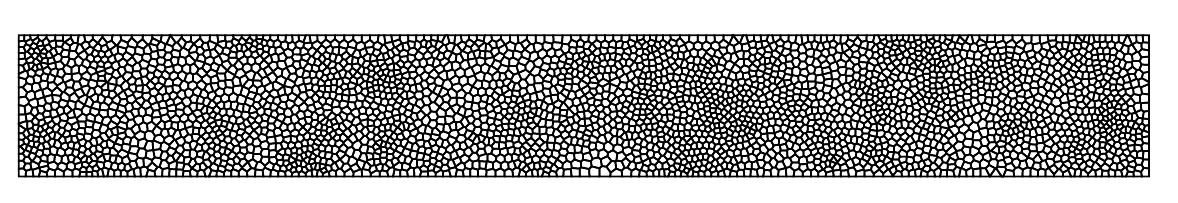}
         \caption{}
     \end{subfigure}
        \caption{Polygonal meshes used for the cantilever beam problem. (a) 150 elements, (b) 1000 elements and (c) 3500 elements.  }
        \label{fig:beammesh}
\end{figure}
For this problem, we compare the results of the stabilization-free VEM to a standard VEM method with a stabilization term~\cite{basicprinciple}. In Figure~\ref{fig:cantilever}, we plot the $L^2$ and energy errors of both the stabilization-free VEM and the standard VEM. 
We find that for the $L^2$ norm and energy seminorm, both methods produce second-order and first-order convergence rates, respectively. This agrees with the theoretical error estimates and demonstrates
that the stabilization-free method compares favorably with the standard stabilized virtual element
method. 
\begin{figure}[!h]
     \centering
     \begin{subfigure}{0.48\textwidth}
         \centering
         \includegraphics[width=\textwidth]{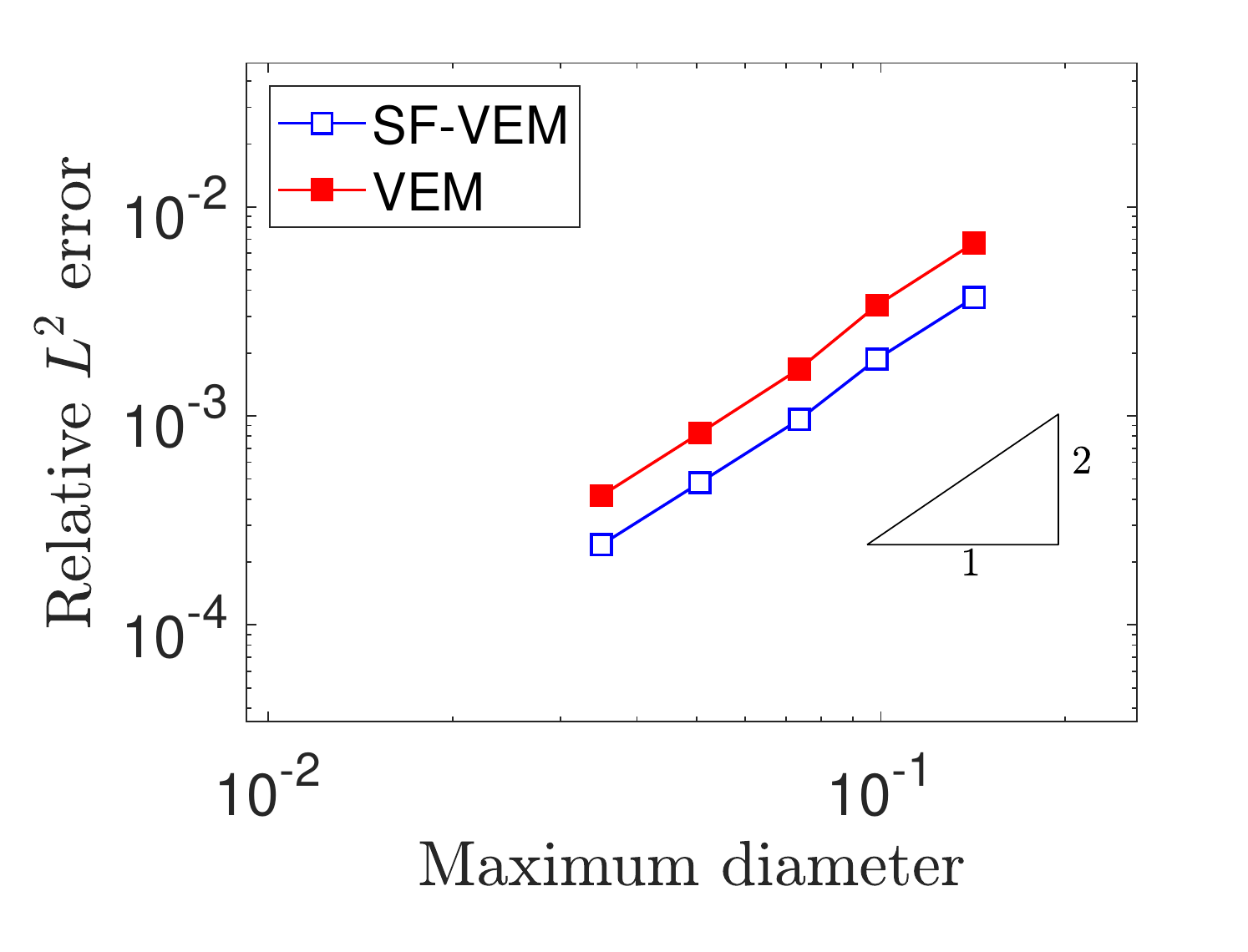}
         \caption{}
     \end{subfigure}
     \hfill
     \begin{subfigure}{0.48\textwidth}
         \centering
         \includegraphics[width=\textwidth]{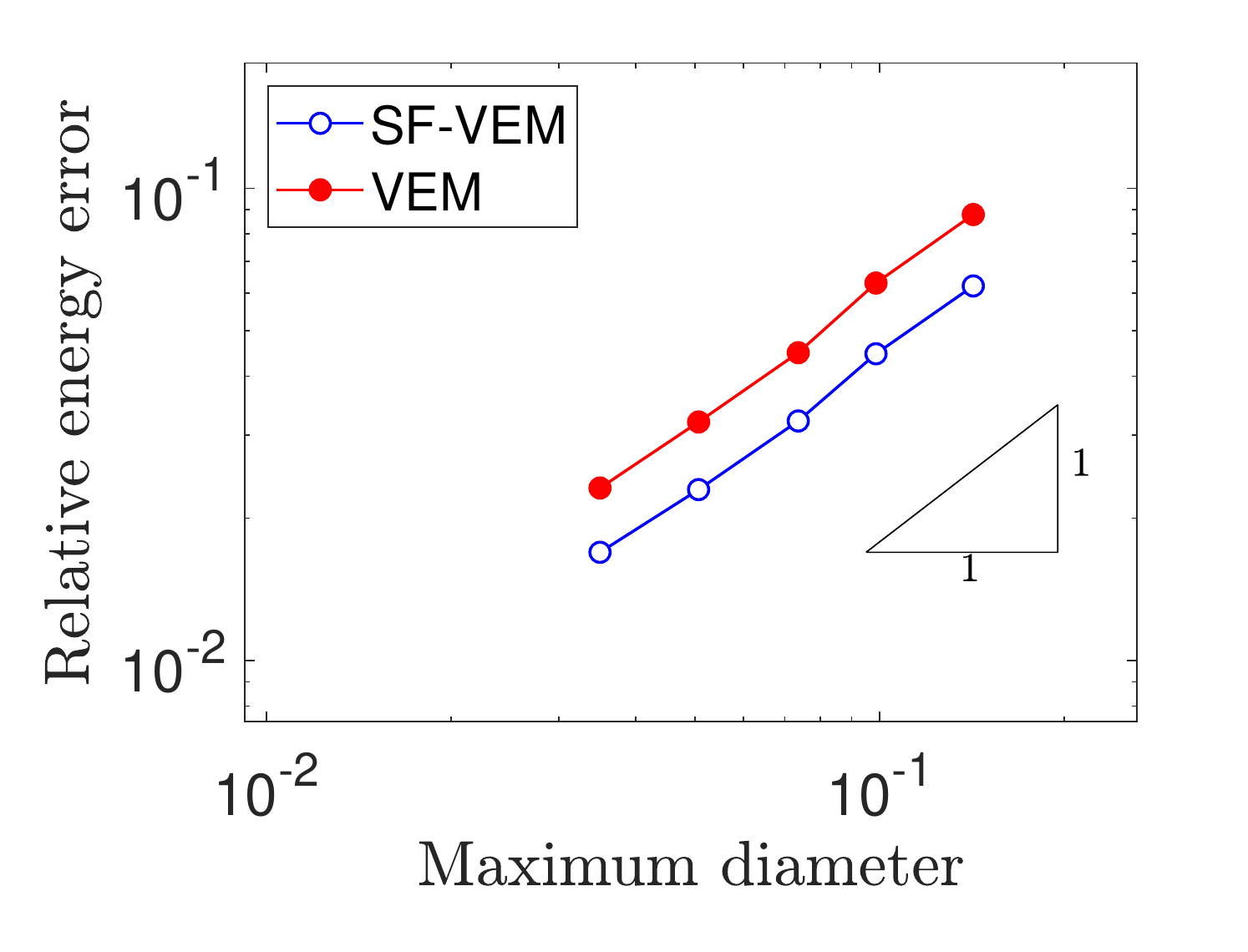}
         \caption{}
     \end{subfigure}
        \caption{Comparison of the convergence of the stabilization-free VEM (SF) and a standard VEM with a stabilization term for the cantilever beam problem. (a) $L^2$ error and (b) energy error. }
        \label{fig:cantilever}
\end{figure}

\acmajor{This problem is also tested on nonconvex meshes. We start with a uniform quadrilateral mesh and split each element into two nonconvex heptagonal elements. In the convergence study, a sequence of successively
refined meshes are used; three meshes from this sequence are presented
in Figure~\ref{fig:beammesh_non_convex}. In Figure~\ref{fig:cantilever_nonconvex}, we plot the $L^2$ and energy errors of both the stabilization-free VEM and the standard VEM. The errors are comparable to the results  
in Figure~\ref{fig:cantilever} and reveals that the stabilization-free method also performs equally well on nonconvex meshes.}
\begin{figure}[!htb]
     \centering
     \begin{subfigure}{\textwidth}
         \centering
         \includegraphics[width=\textwidth]{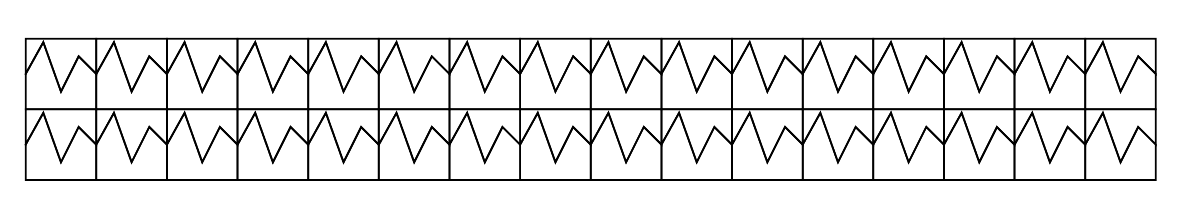}
         \caption{}
     \end{subfigure}
     \vfill
     \begin{subfigure}{\textwidth}
         \centering
         \includegraphics[width=\textwidth]{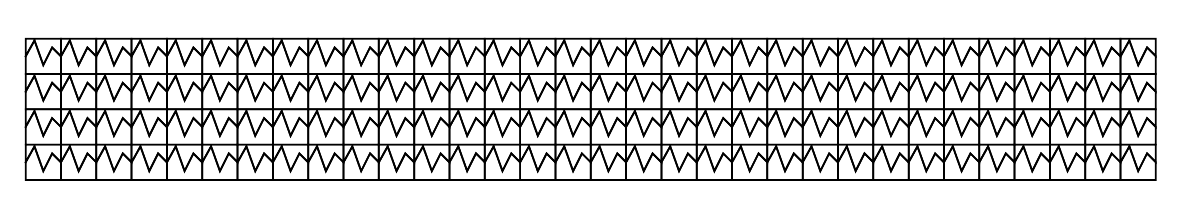}
         \caption{}
     \end{subfigure}
     \vfill
     \begin{subfigure}{\textwidth}
         \centering
         \includegraphics[width=\textwidth]{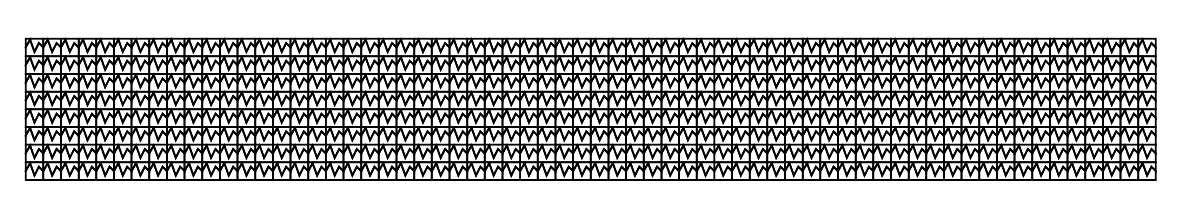}
         \caption{}
     \end{subfigure}
        \caption{\acmajor{Nonconvex polygonal meshes for the cantilever beam problem. (a) 64 elements, (b) 256 elements and (c) 1024 elements.}  }
        \label{fig:beammesh_non_convex}
\end{figure}

\begin{figure}[!h]
     \centering
     \begin{subfigure}{0.48\textwidth}
         \centering
         \includegraphics[width=\textwidth]{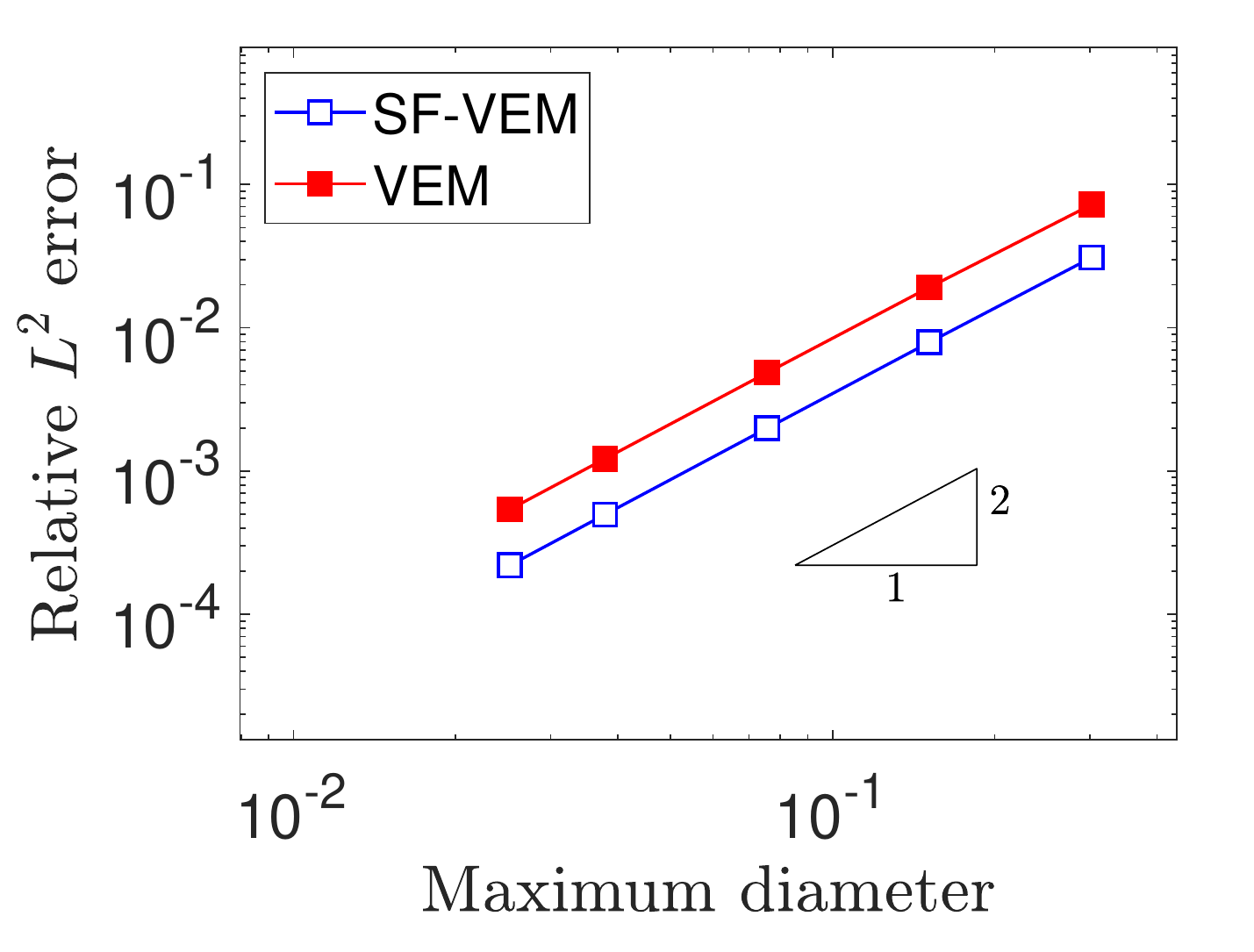}
         \caption{}
     \end{subfigure}
     \hfill
     \begin{subfigure}{0.48\textwidth}
         \centering
         \includegraphics[width=\textwidth]{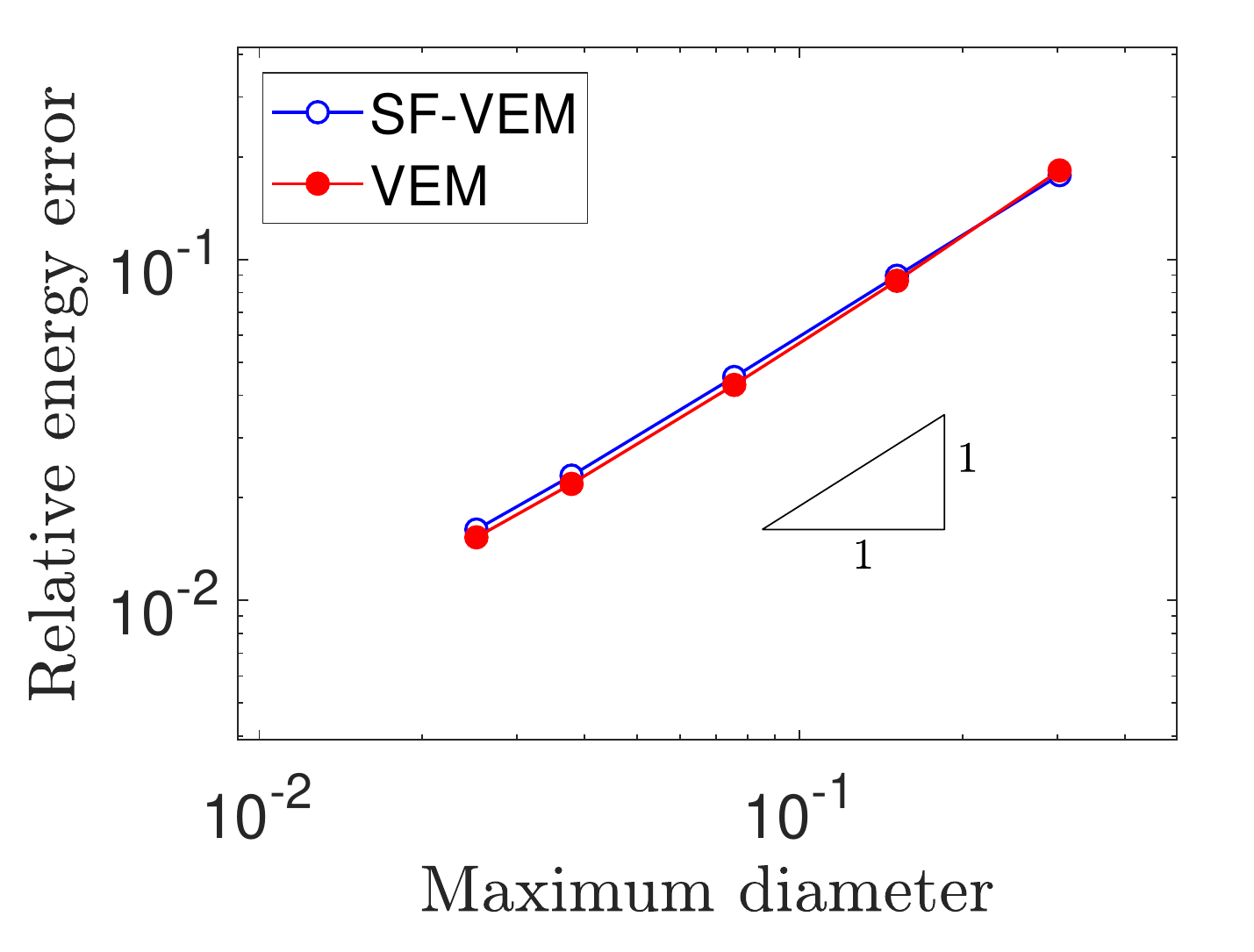}
         \caption{}
     \end{subfigure}
        \caption{\acmajor{Comparison of the convergence of the stabilization-free VEM (SF) and a standard VEM with a stabilization term for the cantilever beam problem on nonconvex meshes. (a) $L^2$ error and (b) energy error.} }
        \label{fig:cantilever_nonconvex}
\end{figure}

\subsection{Infinite plate with a circular hole}
We now consider the problem of an infinite plate with a circular hole under uniaxial tension. The hole is subject to traction-free condition, while a far field uniaxial tension $ \sigma_0=1$ psi, is applied to the plate in the $x$-direction. We use the material properties $\acmajor{E_Y}=2\times 10^7$ psi and $\nu = 0.3$, with a hole radius $a=1$ inch. Due to symmetry, we model a
quarter of the finite plate ($L = 5$ inch), with exact boundary tractions prescribed as data. Plane strain conditions are assumed. A Lloyd iterated Voronoi meshing is used~\cite{talischi2012polymesher}. In Figure~\ref{fig:holemesh}, we show a few illustrative meshes. We also plot the convergence curves for the three associated errors in Figure~\ref{fig:circularhole}. From this plot, we observe that the $L^2$ 
norm converges with order $2$, and the energy is decaying at order $1$, which agree with the theoretical predictions. 
\begin{figure}[!h]
     \centering
     \begin{subfigure}{0.32\textwidth}
         \centering
         \includegraphics[width=\textwidth]{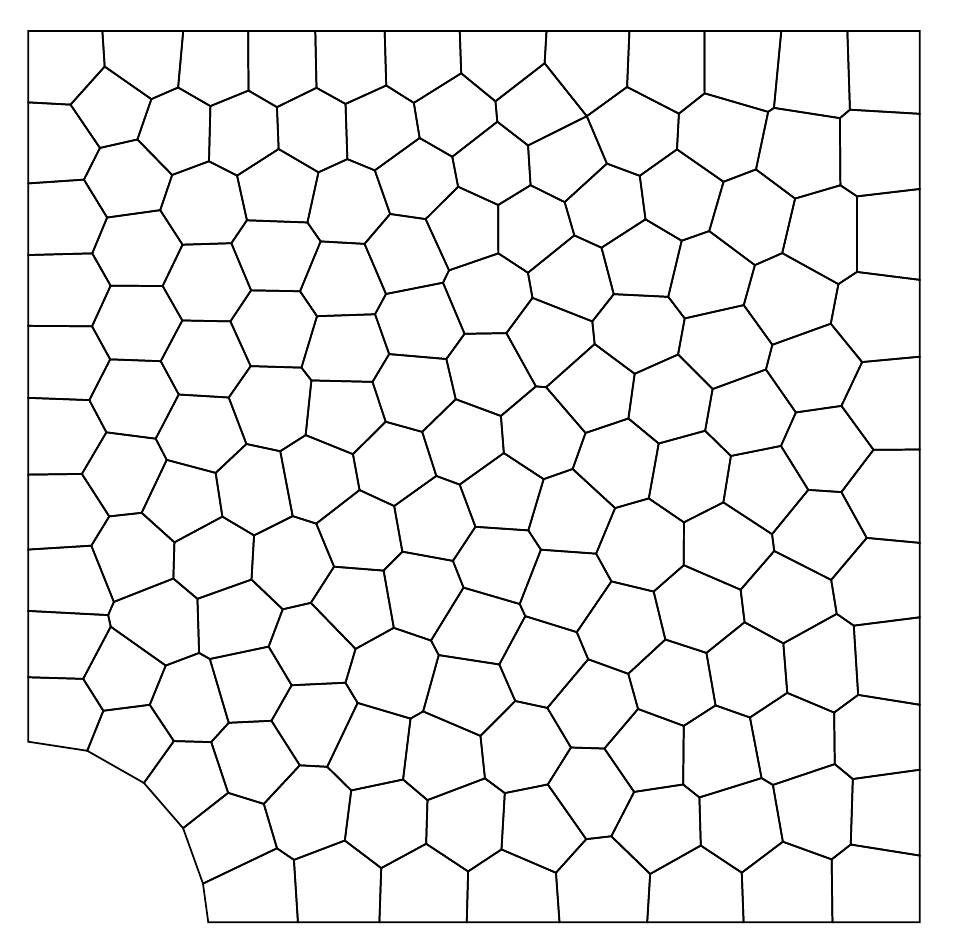}
         \caption{}
     \end{subfigure}
     \hfill
     \begin{subfigure}{0.32\textwidth}
         \centering
         \includegraphics[width=\textwidth]{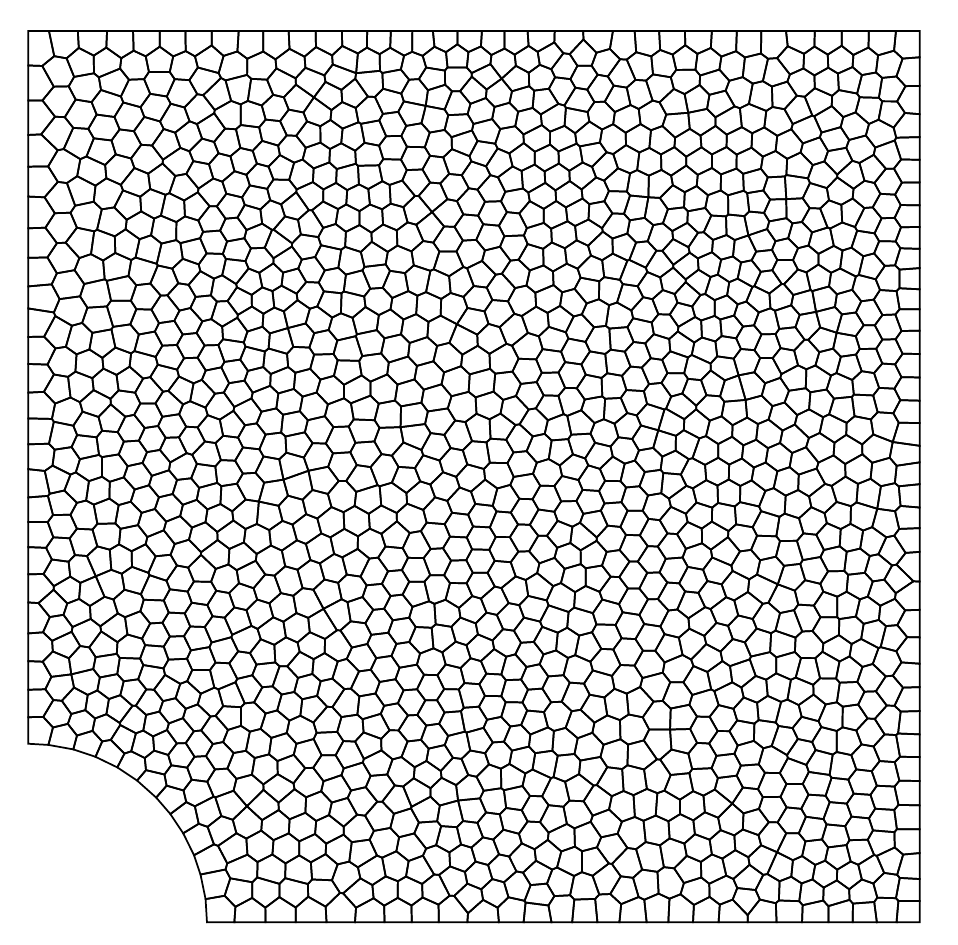}
         \caption{}
     \end{subfigure}
     \hfill
     \begin{subfigure}{0.32\textwidth}
         \centering
         \includegraphics[width=\textwidth]{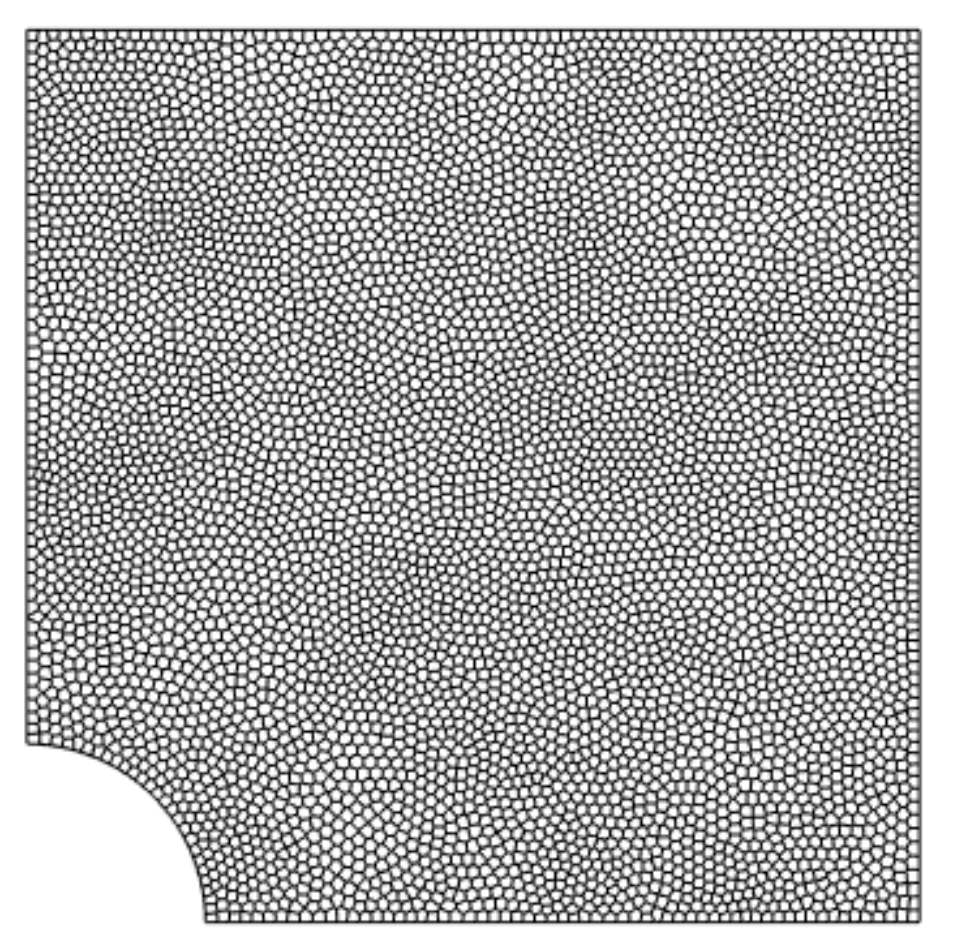}
         \caption{}
     \end{subfigure}
        \caption{Polygonal meshes used for the plate with a circular hole problem. (a) 250 elements, (b) 1500 elements, and (c) 6000 elements.}
        \label{fig:holemesh}
\end{figure}
\begin{figure}[!h]
    \centering
    \includegraphics[width=0.64\textwidth]{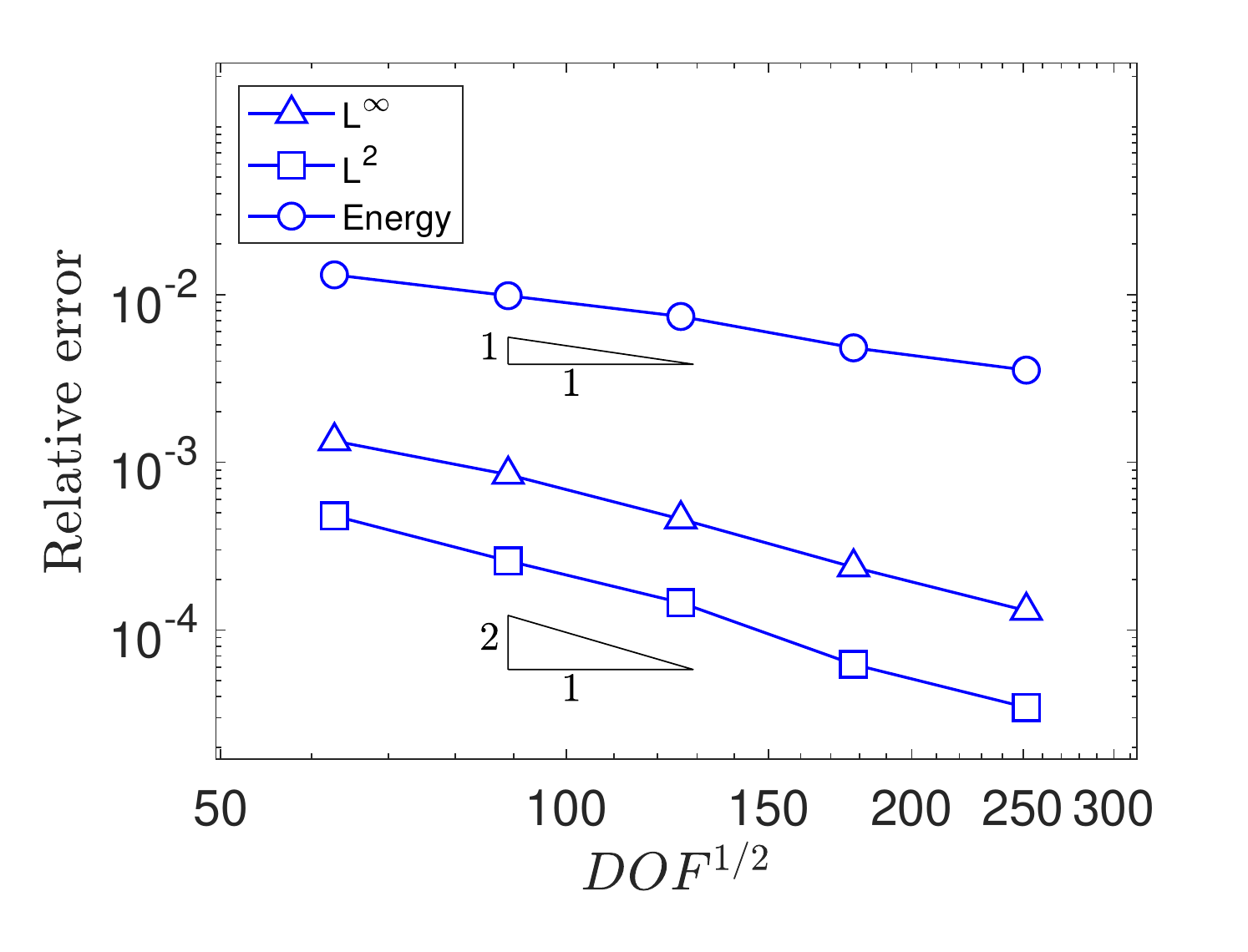}
    \caption{Convergence curves for the plate with a hole problem.}
    \label{fig:circularhole}
\end{figure}

\subsection{Hollow cylinder under internal pressure}
Finally, we consider the problem of a hollow cylinder that is subject to internal pressure~\cite{timoshenko1951theory}.
The inner and outer radii of the cylinder are chosen as $a = 1$ inch and $b = 5$ inch, respectively.
We apply a uniform constant pressure of $p = 10^5$ psi on the inner radius, while the outer radius is traction-free.
In Figure~\ref{fig:cylindermesh}, we present a few sample meshes that are generated 
using~\cite{talischi2012polymesher}.
In Figure~\ref{fig:cylinder}, we plot the errors in the three norms and compare it with the maximum diameter on the mesh. We find that the
convergence rates in both the $L^2$ norm and the energy seminorm are in
agreement with the theoretical rates. 
\begin{figure}
     \centering
     \begin{subfigure}{0.32\textwidth}
         \centering
         \includegraphics[width=\textwidth]{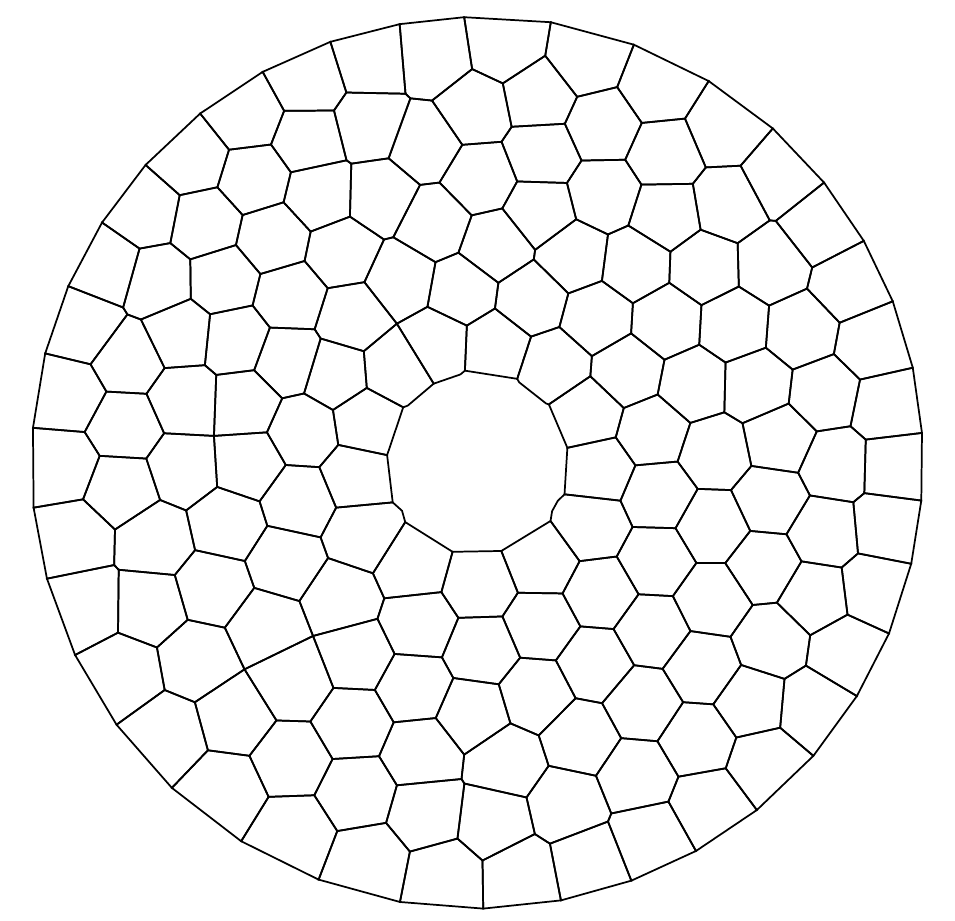}
         \caption{}
     \end{subfigure}
     \hfill
     \begin{subfigure}{0.32\textwidth}
         \centering
         \includegraphics[width=\textwidth]{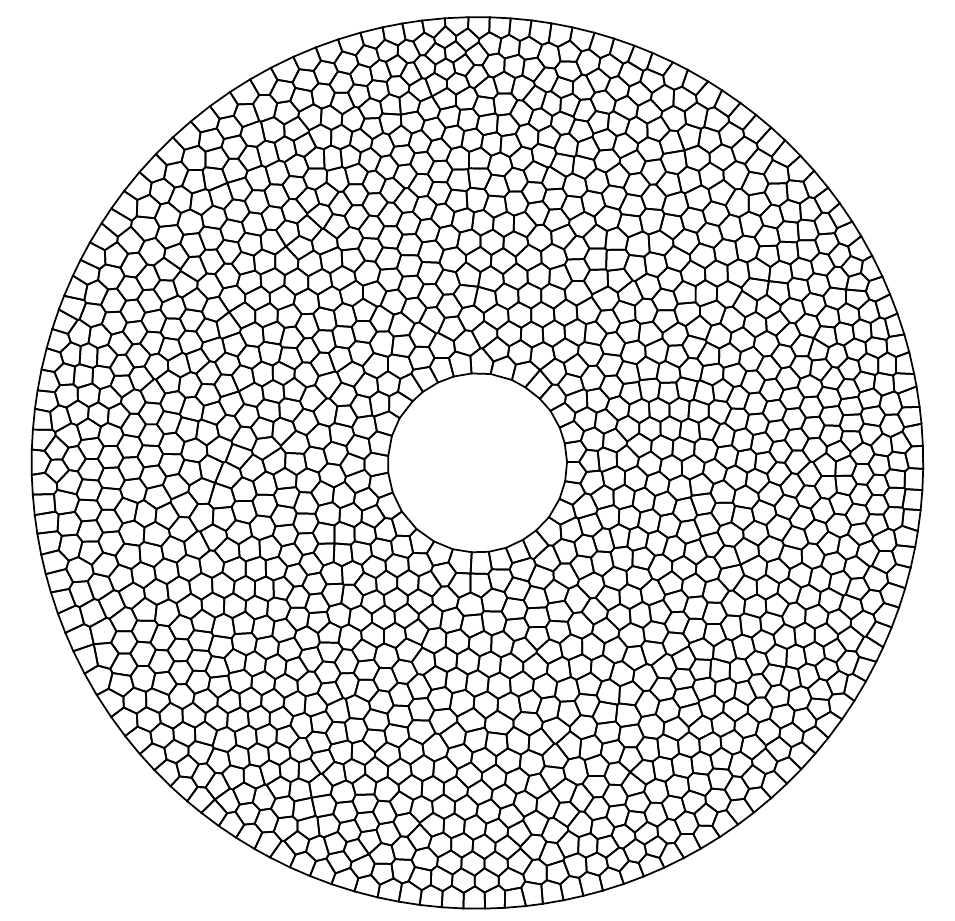}
         \caption{}
     \end{subfigure}
     \hfill
     \begin{subfigure}{0.32\textwidth}
         \centering
         \includegraphics[width=\textwidth]{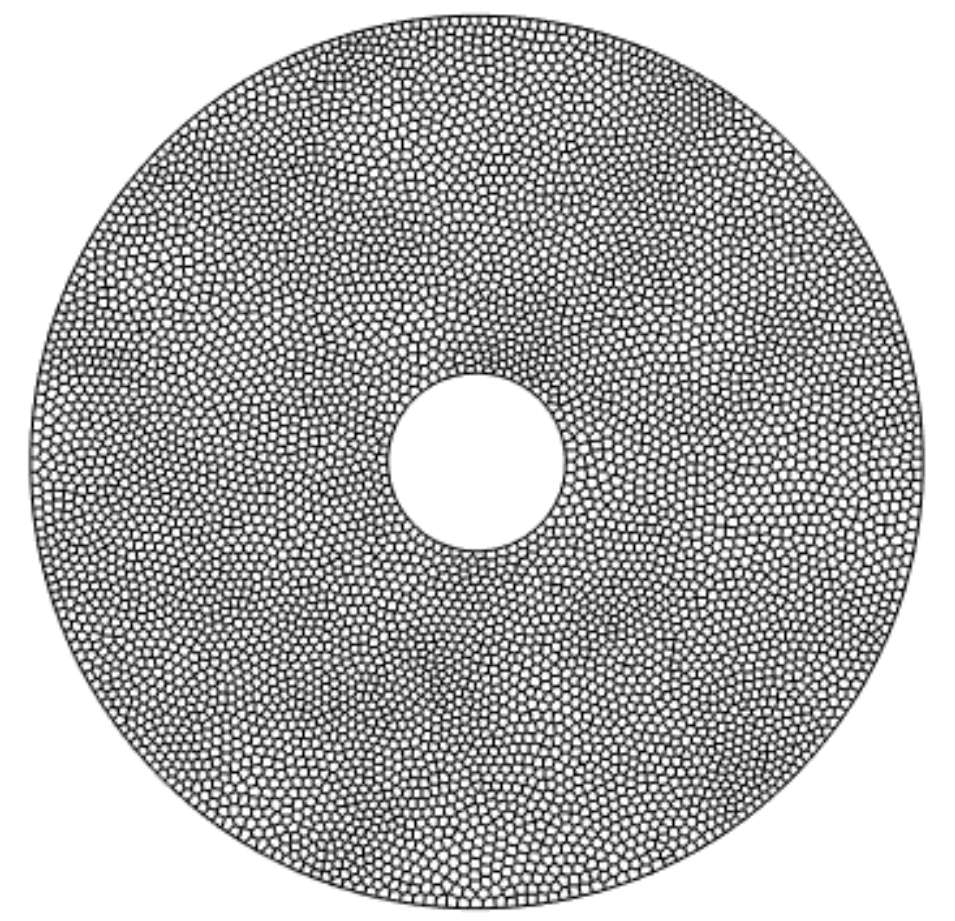}
         \caption{}
     \end{subfigure}
        \caption{Polygonal meshes used for the pressurized cylinder problem. (a) 250 elements , (b) 1500 elements, and (c) 6000 elements.}
        \label{fig:cylindermesh}
\end{figure}
\begin{figure}[!h]
    \centering
    \includegraphics[width=0.64\textwidth]{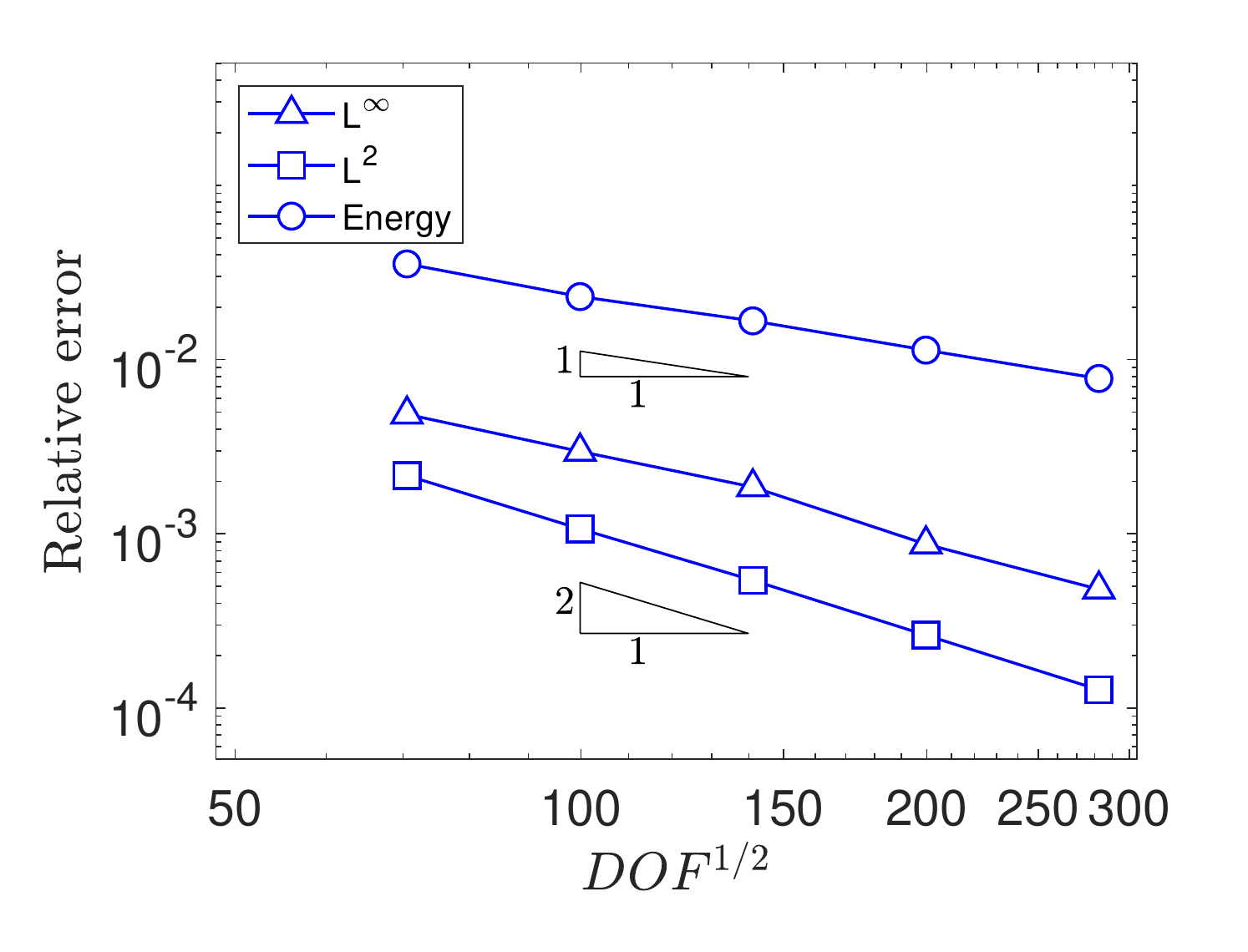}
    \caption{Convergence curves for the hollow cylinder under internal
             pressure problem.}
    \label{fig:cylinder}
\end{figure}

\section{Conclusions}\label{sec:conclusions}
In this paper, we studied an extension of the stabilization-free virtual element method~\cite{berrone2021lowest} to planar elasticity problems. To establish a stabilization-free method for solid continua, we constructed an enlarged VEM space 
that included higher order polynomial approximations of the strain field. On each polygonal element we chose the degree $\ell$ of vector polynomials, and theoretically established that the
discrete problem without a stabilization term was bounded and coercive. Error
estimates of the displacement field in the $L^2$ norm and energy seminorm 
were derived.
We set up the construction of the necessary projections and stiffness matrices, and then solved several
problems from plane elasticity. 
For the patch test, we recovered the displacement and stress fields to near machine-precision. 
From an element-eigenvalue analysis, we numerically
confirmed that the 
choice of $\ell$ was sufficient to 
ensure that the element stiffness matrix had no spurious zero-energy modes,
and hence the element was stable.
For problems such as cantilever beam under shear end load, infinite plate with a circular hole under uniaxial tension, 
and pressurized hollow cylinder under internal pressure, 
we found that the convergence rates of the 
stabilization-free VEM in the $L^2$ norm and energy seminorm were in agreement with the theoretical results.
As part of future work, several topics on stabilization-free VEM hold promise: higher order formulations, applications in three dimensions, and extensions to problems in the mechanics of compressible and incompressible nonlinear solid continua to name a few.

\section*{Acknowledgement}
\acmajor{The authors thank Alessandro Russo for informing us of a counterexample on a regular hexagon that did not satisfy a prior version of the inequality~\eqref{l_condition}, and for ensuing discussions on the extension of the counterexample to regular polygons.}

\end{document}